\def\k{\text{\begin{cursive}
\hspace{-0,1cm}\textcal{k}\hspace{-0,15cm}
\end{cursive}}}
\theoremstyle{plain}
\newtheorem{theorem}{Theorem}[section]
\theoremstyle{plain}
\newtheorem{lemma}[theorem]{Lemma}
\newtheorem{cor}[theorem]{Corollary}
\theoremstyle{definition}
\newtheorem{definition}{Definition}[section]
\newtheorem{defi}{Definition}[section]
\newtheorem{remark}{Remark}[section]
\newtheorem*{maintheorem*}{Main Theorem}
\newtheorem*{maincorollary*}{Main Corollary}
\newcommand{\R}{\ensuremath{\mathbb{R}}}
\newcommand{\E}{\ensuremath{\mathbb{E}}}
\newcommand{\goto}{\ensuremath{\rightarrow}}
\def\e{{\text{e}}}
\numberwithin{equation}{section} \allowdisplaybreaks
\title[Local strong solutions for  stochastic  third grade fluid  equations]
{Local strong solutions to  the  stochastic third grade fluid equations  with Navier boundary
	conditions  }
\date{\today }
\author[Yassine Tahraoui]{ Yassine Tahraoui}
\address[ Yassine Tahraoui]{
	Scuola Normale Superiore, Piazza dei Cavalieri, 7, 56126 Pisa, Italia}
\email[Yassine Tahraoui]{yassine.tahraoui@sns.it}
\author[Fernanda Cipriano]{Fernanda Cipriano}
\address[Fernanda Cipriano]{
	Center for Mathematics and Applications (NovaMath), NOVA	SST and Department of Mathematics, NOVA	SST,	Portugal}
\email[Fernanda Cipriano]{cipriano@fct.unl.pt}
\begin{document}
\begin{abstract}
This work is devoted to the  study of   non-Newtonian  fluids 
of grade three  on  two-dimensional  and three-dimensional  bounded domains,
 driven by a nonlinear multiplicative Wiener  noise.
 More precisely, we  establish the  existence and  uniqueness of the local (in time)
 solution, which corresponds to an addapted stochastic process with sample paths defined up to a certain
   positive stopping time, with values in the Sobolev space $H^3$.  Our approach  combines a cut-off approximation scheme,  a stochastic compactness arguments and a general version of Yamada-Watanabe theorem. This
	leads to the existence of a local strong pathwise solution.
\end{abstract}

\maketitle
	\textbf{Keywords:} Third grade fluids, Navier-slip boundary conditions, Stochastic PDE, Well-posedness.\\[1mm]
	\hspace*{0.45cm}\textbf{MSC:} 35R60,  60H15, 76A05, 76D03. \\
	
\section{Introduction}

In this work, we are concerned with the existence and uniqueness of  strong solution for  a stochastic  incompressible third grade fluid model  in a  two-dimensional (2D) or three-dimensional (3D)  bounded domain  with  smooth boundary. More precisely, the  evolution equation is given by 
\begin{align}
&dv+\Bigl(-\nu \Delta y+(y\cdot \nabla)v+\sum_{j}v^j\nabla y^j-(\alpha_1+\alpha_2)\text{div}(A^2) -\beta \text{div}(|A|^2A)\Bigr)dt\notag\\
&\qquad=( -\nabla \mathbf{P} +U)dt+ G(\cdot,y)d\mathcal{W},\label{eq1}
\end{align}
where 
$v:=v(y)=y-\alpha_1\Delta y, A:=A(y)= \nabla y+\nabla y^T,$
and 
$\mathcal{W}$ is a cylindrical Wiener process with values in a Hilbert space $H_0$.
The constant $\nu$ represents the fluid viscosity, $\alpha_1,\alpha_2$, $\beta$ are  the material moduli, and  $\mathbf{P}$ denotes the pressure.\\

Recently,  special attention has been devoted to the study of non-Newtonian viscoelastic fluids of differential type, which include natural biological fluids, geological flows and others, and arise 
in  polymer processing, coating, colloidal suspensions and emulsions, ink-jet prints, etc. (see e.g \cite{FR80,RKWA17}). It is worth to mention that several simulations studies have been performed by using the third grade fluid models, in order to understand and explain the characteristics of several nanofluids (see 
\cite{PP19,RHK18} and references therein).
We recall that nanofluids  are  engineered colloidal suspensions of nanoparticles (typically made of metals, oxides, carbides, or carbon nanotubes) in a base fluid as water, ethylene glycol and oil, which exhibit enhanced thermal conductivity compared to the base fluid, which turns out to be of  great  potential to be used in  technology, including heat transfer, microelectronics, fuel cells, pharmaceutical processes, hybrid-powered engines, engine cooling/vehicle thermal management, etc.  
Therefore  the  mathematical analysis of third grade fluids equations should be relevant to  predict and control the  behavior of these fluids, in order  to design optimal flows that can be successfully used and applied in the industry.\\

In this work, we study the stochastic evolutionary equation \eqref{eq1}
 supplemented  with a homogeneous  Navier-slip boundary condition, which  allows the slippage of the fluid against the boundary wall (see Section \ref{Sec2} for more details). 
Besides the most studies on fluid dynamic equations consider the Dirichlet  boundary condition, which assumes that the particles adjacent to the boundary surface have the same velocity  as the boundary, 
there are physical reasons to consider slip boundary conditions. Namely,   practical studies (see e.g  \cite{ RKWA17})
show that  viscoelastic fluids slip against  the boundary, and on the other hand, mathematical studies turn out that the Navier boundary conditions are compatible with the vanishing viscosity transition 
 (see \cite{CC_1_13, CC_2_13, K06}).
It is worth mentioning that the study of the small viscosity/large Reynolds number regime is crucial to understand the turbulent flows.
 The third grade fluid equation  with the Dirichlet boundary condition was studied in  \cite{AC97, SV95}, where the authors  proved the existence and the uniqueness of local solutions for initial conditions in $H^3$ or  global in time solution for small initial data when compared with the viscosity (see also \cite{BL99}). 
Later on \cite {Bus-Ift-1, Bus-Ift-2}, the authors considered the  equation with a homogeneous Navier-slip boundary condition and established the well-posedness of a global solution for initial condition in $H^2$, without any restriction on the size of the data. 	Concerning	the	stochastic third grade
	fluid equations,	recently the authors in \cite{AC20}  studied the existence of weak probabilistic (martingale) solutions with $H^2$-initial data in 3D and  the authors in \cite{Cip-Did-Gue} showed the existence of strong probabilistic  (pathwise) solution with $H^2$-initial data in 2D. 
Nevertheless, to tackle relevant problems it is necessary to improve the $H^2$-regularity of the solutions with respect to the space variable.\\

This article is devoted to show  the existence  and uniqueness of a local strong solution, both from the PDEs and probabilistic point of view.
Namely,  the  local strong solution will be defined on the original probability space and it will satisfy the equation
in a  pointwise sense  (not in distributional sense) with respect to the space variable, up to a certain stopping time. An important motivation to consider  strong solutions is the study of the stochastic optimal control problem constrained by the equation \eqref{eq1},  in 2D as well as in 3D, where $H^3$-regularity is a key ingredient to establish the first-order necessary optimality condition (see e.g. \cite{CC18,Tah-Cip} for the 2D case	and	\cite{Tah-Cip-2}	for	the	3D	case). 
However, the construction of  $H^3$-solutions,  in the presence of a stochastic noise
 is  not an easy task even in the 2D case.  In addition, 
the presence of strongly nonlinear terms in the equation makes the analysis much more challenging when dealing with 3D physical domains.
We should say that the method in \cite{Cip-Did-Gue} based on deterministic compactness results conjugated with an uniqueness type argument are not expected to work in 3D (where the global uniqueness is an open problem for the deterministic equation).
Here, we establish  the existence and the uniqueness of a local $H^3$-solution in 2D and 3D by following a different strategy, which is based on the introduction of an appropriate  cut-off system. 
To the best of the author’s knowledge, the problem of the existence and uniqueness of  $H^3$-solutions for the stochastic third grade fluid equation is being addressed  here for the first time. \\

The article is organized as follows:
in Section \ref{Sec2}, we state  the third grade fluid model and define the  appropriate functional spaces and stochastic setting.  Section \ref{Sec3} is devoted to the presentation of some definition and the main result of this paper. In section \ref{Sec-cut}, we introduce an approximated system, by using an appropriate cut-off function and we prove the existence of Martingale (probabilistic weak) solution to the approximated problem. The analysis combines a stochastic compactness arguments based on  Prokhorov and Skorkhod theorems. Section \ref{Sec-Strong} concerns the introduction of a "modified problem", where the uniqueness holds globally in time and we are able to construct a probabilistic strong solution by using \cite[Thm. 3.14]{Kurtz}. Finally, Section \ref{section-proof-main} combines the previous results to prove the main result of this work. 

\section{Content of the study}\label{Sec2}
Let $(\Omega,\mathcal{F},P)$ be a complete probability space and $\mathcal{W}$ be a cylindrical  Wiener process defined on $(\Omega,\mathcal{F},P)$  endowed with  the right-continuous filtration $(\mathcal{F}_t)_{t\in[0,T]}$ generated by $\{\mathcal{W}(t)\}_{t\in [0,T]}$ . We assume that $\mathcal{F}_0$ contains all the P-null subset of $\Omega$ (see Section \ref{Noise-section} for the assumptions on the noise). Our aim  is to study the well posedness of the  third grade fluids equation on a bounded  and simply connected domain $D  \subset  \mathbb{R}^d,\; d=2,3,$ with regular (smooth) boundary $\partial D$, supplemented with  a Navier-slip boundary condition,  which reads
\begin{align}\label{I}
\begin{cases}
d(v(y))=\big(-\nabla \mathbf{P}+\nu \Delta y-(y\cdot \nabla)v(y)-\displaystyle\sum_{j}v^j(y)\nabla y^j+(\alpha_1+\alpha_2)\text{div}(A^2) \hspace*{-3cm}&\\[0.15cm]
\hspace*{2cm}+\beta \text{div}(|A|^2A)+U\big)dt+ G(\cdot,y)d\mathcal{W} \quad &\text{in } D \times (0,T),\\[0.15cm]
  \text{div}(y)=0 \quad &\text{in } D \times (0,T),\\[0.15cm]
y\cdot \eta=0, \quad \left(\eta \cdot \mathbb{D}(y)\right)\big\vert_{\text{tan}}=0  \quad &\text{on } \partial D \times (0,T),\\[0.15cm]
y(x,0)=y_0(x) \quad &\text{in } D ,
\end{cases}
\end{align}
where $y:=(y^1,\dots, y^d)$ is the velocity of the fluid, $\mathbf{P}$ 
is the pressure and $U$ corresponds to the  external force.
The operators   $v$, $A$, $\mathbb{D}$ are defined by
$v(y)=y-\alpha_1 \Delta y:=(y^1-\alpha_1 \Delta y^1,\dots,y^d-\alpha_1 \Delta y^d)$ 
and 
 $ A:=A(y)=\nabla y+\nabla y^T=2\mathbb{D}(y)$.
The vector $\eta$ denotes the outward normal to the boundary $\partial D$ and  $u\vert_{\text{tan}}$
represents the tangent component of a vector $u$ defined on the boundary $\partial D$.

In addition,  $\nu$ denotes  the viscosity of the fluid
  and  $\alpha_1,\alpha_2$, $\beta$  
are material moduli satisfying 
\begin{align}\label{condition1}
\nu \geq 0, \quad \alpha_1> 0, \quad |\alpha_1+\alpha_2 |\leq \sqrt{24\nu\beta}, \quad  \beta \geq 0.
\end{align}
It is worth noting that  \eqref{condition1} allows the motion of the fluid to be compatible with thermodynamic
laws (see e.g. \cite{FR80}).
We consider the usual notations  for the 
 scalar  product    $A\cdot B:=tr(AB^T)$
 between two matrices 
 $A, B \in  \mathcal{M}_{d\times d},$ 
  and set  $\vert A\vert^2:=A\cdot A. $ In addition, we recall that 
$$ A^2:=AA=\biggl(\sum_{k=1}^da_{ik}a_{kj}\biggr)_{1\leq i,j\leq d } \text{ for any }  A=(a_{ij})_{1\leq i,j\leq d}\in M_{d\times d}.$$
The divergence of a  matrix $A\in \mathcal{M}_{d\times d}$ is given by 
$(\text{div}(A)_i)_{i=1,\cdots,d}=(\sum_{j=1}^d\partial_ja_{ij})_{i=1,\cdots,d}. $
The diffusion coefficient $G$ will be specified in Subsection \ref{Noise-section}.\\
\subsection{The functional setting}
We denote by $\mathcal{D}(u)=(u,\nabla u)$ the vector of $\mathbb{R}^{d^2+d}$ whose components are the components  of $u$ and the first-order derivatives of these components. Similarly, $\mathcal{D}^k(u)=(u,\nabla u,\cdots,\nabla^ku)$ the vector of $\mathbb{R}^{d^{k+1}+\cdots+d^2+d}$ whose components are the components  of $u$ together with the derivatives of order up to $k$ of these components. 

  $Q= D\times [0,T], \quad \Omega_T=\Omega\times [0,T].$ We will denote by $C,K$ generic constants, which may varies from line to line.\\

  Let	$m\in	\mathbb{N}^*$	and	$1\leq	p<	\infty$,	we	denote	by	$W^{m,p}(D)$	the	standard	Sobolev	space	of	functions	whose	weak	derivative	up	to	order	$m$	belong	to	the	Lebesgue	space	$L^p(D)$	and	set	$H^m(D)=W^{m,2}(D)$	and	$H^0(D)=L^2(D)$.
Following	\cite[Thm.	1.20	$\&$	Thm.	1.21	]{Roubicek},	we	have the continuous embeddings:
	\begin{align}\label{Sobolev-embedding}
	\text{ if }p<d,\quad &W^{1,p}(D) \hookrightarrow L^a(D),\ \forall a \in [1,p^*]\text{ and 
it is  compact if }a \in [1,p^*),\nonumber
	\\
	\text{ if }p=d,\quad & W^{1,p}(D) \hookrightarrow L^a(D),\ \forall a <+\infty\text{ 
	is  compact}, 
	\\
	\text{ if }p>d,\quad &W^{1,p}(D) \hookrightarrow C(\overline D)\text{  
	is compact, }\nonumber
	\end{align}
	 	where $p^*=\frac{pd}{d-p}$ if $p<d$, 	denotes	the Sobolev embedding exponent.
	Proceeding  by induction,	one	gets	the	Sobolev	embedding	for	$W^{m,p}(D)$	instead	of	$W^{1,p}(D)$,	we	refer	to	\cite[Sections	5.6	$\&$	5.7]{Evans}	for	more	details.
		For a Banach space $X$, we define
	$$  (X)^k:=\{(f_1,\cdots,f_k): f_l\in X,\quad l=1,\cdots,k\}\;\text{ for	positive integer }	k.$$
		For the sake of simplicity, we do not distinguish between scalar, vector or matrix-valued   notations when it is clear from the context. In particular, $\Vert \cdot \Vert_X$  should be understood as follows
	\begin{itemize}
		\item $\Vert f\Vert_X^2= \Vert f_1\Vert_X^2+\cdots+\Vert f_d\Vert_X^2$ for any $f=(f_1,\cdots,f_d) \in (X
		)^d$.
		\item $\Vert f\Vert_{X}^2= \displaystyle\sum_{i,j=1}^d\Vert f_{ij}\Vert_X^2$ for any $f\in \mathcal{M}_{d\times d}(X)$.
	\end{itemize}
 We recall that
\begin{align*}
(u,v)&=\sum_{i=1}^d\int_Du_iv_idx, \quad  \forall u,v \in (L^2(D))^d,\quad
(A,B)&=\int_D A\cdot Bdx ; \quad  \forall A,B \in \mathcal{M}_{d\times d}(L^2(D)).
\end{align*}

The unknowns in the system \eqref{I} are 
the velocity random field and the  scalar pressure random field: \begin{align*}
	y:\Omega\times D\times [0,T]&\to \mathbb{R}^d, \;d=2,3\\
	(\omega,x,t)&\mapsto (y^1(\omega,x,t), \dots,  y^d(\omega,x,t));\\
	   p:\Omega\times  D\times [0,T] &\to \mathbb{R}\\ (\omega,x,t)&\mapsto  p(\omega,x,t).
\end{align*}

Now, let us introduce the following functional Hilbert spaces:
\begin{equation}
\begin{array}{ll}
H&=\{ y \in (L^2(D))^d \,\vert \text{ div}(y)=0 \text{ in } D \text{ and } y\cdot \eta =0 \text{ on } \partial D\}, \\[1mm]
V&=\{ y \in (H^1(D))^d \,\vert \text{ div}(y)=0 \text{ in } D \text{ and } y\cdot \eta =0 \text{ on } \partial D\}, \\[1mm]
W&=\{ y \in V\cap (H^2(D))^d\; \vert\, (\eta \cdot 
\mathbb{D}(y))\big\vert_{\text{tan}} =0 \text{ on } \partial D\},\quad
\widetilde{W}=(H^3(D))^d\cap W, 
\end{array}
\end{equation}
and  recall the  Leray-Helmholtz projector $\mathbb{P}: (L^2(D))^d \to H$, which is a linear bounded operator characterized by the following $L^2$-orthogonal decomposition
$v=\mathbb{P}v+\nabla \varphi,\;  \varphi \in H^1(D). $

We consider  on $H$ the $L^2$-inner product $(\cdot,\cdot)$ and the associated norm $\Vert \cdot\Vert_{2}$.
The spaces  $V$, $W$ and $\widetilde{W}$
 will be endowed with the following  inner products,
 which are related with the structure of the equation
\begin{align*}
(u,z)_V&:=(v(u),z)=(u,z)+2\alpha_1(\mathbb{D}(u),\mathbb{D}(z)),\\
(u,z)_W&:=(u,z)_V+(\mathbb{P}v(u),\mathbb{P}v(z)),\\
(u,z)_{\widetilde{W}}&:=(u,z)_V+(\text{curl}v(u),\text{curl}v(z)),
\end{align*}
	and denote by $\Vert \cdot\Vert_V,\Vert \cdot\Vert_W$ and $\Vert \cdot\Vert_{\widetilde{W}}$ the corresponding norms.	We	recall	that	the	norms	$\Vert \cdot\Vert_V$	and	$\Vert \cdot\Vert_{H^1}$	are	 equivalent  due to  the Korn inequality. In addition, the	norms	$\Vert \cdot\Vert_W$ and  $\Vert \cdot\Vert_{\widetilde{W}}$ are equivalent to  the
		classical Sobolev norms $	\Vert \cdot\Vert_{H^2}$ and $\Vert \cdot\Vert_{H^3}$, 
	respectively,	thanks	to	Navier boundary conditions	\eqref{I}$_{(3)}$	and	divergence free	property,	see	\cite[Corollary 6	]{Bus-Ift-2}.\\
	
The usual norms on the classical Lebesgue and Sobolev spaces $L^p(D)$ and $W^{m,p}(D)$ will be denoted 
by denote   $\|\cdot \|_p$ and 
 $\|\cdot\|_{W^{m,p}}$, respectively.
In addition, given a Banach space $X$, we will denote by $X^\prime$ its dual.\\

$\mathcal{C}^{\gamma}([0,T],X)$ stands for  the space of 
$\gamma $-H\"older-continuous  functions with  values in $X$, where $\gamma \in ]0,1[$.\\

For $T>0$, $0<s<1$ and $1\leq p <\infty$, let us   recall the definition of the fractional Sobolev space  
$$W^{s,p}(0,T;X):=\{ f \in L^p(0,T;X) \; \vert \; \Vert f\Vert_{W^{s,p}(0,T;X)} <\infty\},$$
where $\Vert f \Vert_{W^{s,p}(0,T;X)}= \Big( \Vert f\Vert_{L^p(0,T;X)}^p+\displaystyle\int_0^T\int_0^T\dfrac{\Vert f(r)-f(t)\Vert_X^p}{\vert r-t\vert^{sp+1}}drdt\Big)^{\frac{1}{p}}$.\\

Since	$L^{\infty}(0,T;\widetilde{W})$	is	not	separable,	it's	convenient	to	introduce	the	following	space:
$$L^p_{w-*}(\Omega;L^\infty(0,T;\widetilde{W}))=\{ 	u:\Omega\to	L^\infty(0,T;\widetilde{W})	\text{	is	 weakly-* measurable		and	}	\E\Vert	u\Vert_{L^\infty(0,T;\widetilde{W})}^p<\infty\},$$
where	weakly-* measurable	stands	for	the	measurability	when	$L^\infty(0,T;\widetilde{W})$	is	endowed	with	the	$\sigma$-algebra	generated	by	the	Borel	sets	of	weak-*	topology,	see	\textit{e.g.}		 \cite[Rmq. 2.1]{Vallet-Zimmermann}.
\bigskip

It will be convenient to introduce the 
following   trilinear form
	\begin{align*}
	b(y,z,\phi)=((y\cdot \nabla) z,\phi)=\int_D((y\cdot \nabla) z)\cdot \phi\, dx, \quad \forall y,z,\phi \in (H^1(D))^d,\end{align*}
	which is anti-symmetric in the last two variables, namely $$b(y,z,\phi)=-b(y,\phi,z),\quad\forall y \in V,\; \forall z,\phi \in (H^1(D))^d.$$
	
\bigskip

The  results on the  following modified Stokes problem
will very usefull to  our analysis
\begin{align}\label{Stokes}
\begin{cases}
h-\alpha_1\Delta h+\nabla p=f, \quad 
\text{div}(h)=0 \quad &\text{in } D,\\[0.15cm]
h\cdot \eta=0, \quad 
\left(\eta \cdot \mathbb{D}(h)\right)\big\vert_{\text{tan}}=0 \quad &\text{on } \partial D.	\end{cases}
\end{align}
The solution $h$ will be denoted by   $h=(I-\alpha_1\mathbb{P}\Delta)^{-1}f$.
We recall the existence and the uniqueness results, as well as the regularity of the solution $(h,p)$.	 Additional information can be found in 
 \cite[Theorem 3]{Busuioc}	and	\cite[Lemma	3.2]{Chemetov-Cipriano} for	the	3D	and  2D	cases, respectively. 

\begin{theorem}\label{Thm-Stokes}
	Suppose that  $f \in (H^m(D))^d,\, m=0,1$. Then there exists a unique (up to a constant for $p$) solution 	$(h,p) \in (H^{m+2}(D))^d\times H^{m+1}(D)$  of the Stokes problem \eqref{Stokes} such that
	$$ \Vert h \Vert_{H^{m+2}}+\Vert p \Vert_{H^{m+1}} \leq C(m)\Vert f\Vert_{H^m},\;\text{	
		where	}  C(m)\text{	is a positive constant.	}$$
\end{theorem}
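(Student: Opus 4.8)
The plan is to reduce the modified Stokes system \eqref{Stokes} to a standard Stokes problem and then invoke the classical theory, carefully tracking the boundary condition. First I would rewrite the equation $h-\alpha_1\Delta h+\nabla p=f$ as $-\alpha_1\Delta h+\nabla p=f-h$, treating the lower-order term $h$ as part of the right-hand side. The natural strategy is a fixed-point/continuity argument: for given $g\in(L^2(D))^d$, solve the genuine Stokes problem $-\alpha_1\Delta h+\nabla p=g$, $\mathrm{div}(h)=0$ in $D$ with the Navier-slip conditions $h\cdot\eta=0$ and $(\eta\cdot\mathbb{D}(h))|_{\mathrm{tan}}=0$ on $\partial D$; this is known to be well-posed with $(h,p)\in(H^{2}(D))^d\times H^{1}(D)$ and the estimate $\Vert h\Vert_{H^2}+\Vert p\Vert_{H^1}\leq C\Vert g\Vert_{2}$ (the $H^2$-regularity for the Navier-slip Stokes operator on a smooth simply connected domain; references are indicated in the excerpt). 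Then define the map $\mathcal{T}:g\mapsto h-... $ — more precisely, one seeks $h$ as the fixed point of $h\mapsto S(f-h)$ where $S$ is the solution operator of the pure Stokes problem.

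The key steps in order: (1) Establish the variational formulation. Test the equation against $z\in V$: using $\mathrm{div}(z)=0$ the pressure term drops, and integrating $-\alpha_1\Delta h$ by parts against $z$, the Navier-slip condition $(\eta\cdot\mathbb{D}(h))|_{\mathrm{tan}}=0$ together with $z\cdot\eta=0$ makes the boundary term vanish, yielding $2\alpha_1(\mathbb{D}(h),\mathbb{D}(z))+(h,z)=(f,z)$, i.e. $(h,z)_V=(f,z)$. By Korn's inequality and Lax–Milgram this has a unique solution $h\in V$ with $\Vert h\Vert_V\leq C\Vert f\Vert_{V'}\leq C\Vert f\Vert_{2}$. (2) Recover the pressure via de Rham's theorem: since the functional $z\mapsto (f,z)-(h,z)_V$ vanishes on divergence-free test functions, there exists $p\in H^1(D)$ (up to a constant) with $\nabla p=f-h+\alpha_1\Delta h$ in the distributional sense. (3) Bootstrap regularity: with $h\in V$ and $f\in(H^m(D))^d$, the right-hand side $f-h$ lies in $(H^{\min(m,1)})^d$ hence at least $(L^2)^d$; apply the elliptic $H^2$-regularity theory for the Stokes system with Navier-slip boundary conditions (this is the content of \cite[Theorem 3]{Busuioc} and \cite[Lemma 3.2]{Chemetov-Cipriano}) to get $h\in(H^2(D))^d$, $p\in H^1(D)$ with $\Vert h\Vert_{H^2}+\Vert p\Vert_{H^1}\leq C(\Vert f\Vert_2+\Vert h\Vert_2)\leq C\Vert f\Vert_2$. (4) When $m=1$, iterate once more: now $f-h\in(H^1(D))^d$, so the same elliptic regularity upgrades $h$ to $(H^3(D))^d$ and $p$ to $H^2(D)$, with $\Vert h\Vert_{H^3}+\Vert p\Vert_{H^2}\leq C(\Vert f\Vert_{H^1}+\Vert h\Vert_{H^1})\leq C\Vert f\Vert_{H^1}$, using the $H^2$-bound already obtained. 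Uniqueness follows immediately from step (1) by linearity (the difference of two solutions has zero $V$-norm).

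I expect the main obstacle to be step (3)–(4), the elliptic regularity for the Stokes operator under Navier-slip boundary conditions rather than the usual Dirichlet condition. The boundary condition $(\eta\cdot\mathbb{D}(h))|_{\mathrm{tan}}=0$ is not the standard one for which textbook Stokes regularity is stated, and establishing that it satisfies the Agmon–Douglis–Nirenberg complementing condition (so that the full $H^{m+2}$ estimate holds on the smooth domain) requires care — in particular the curvature of $\partial D$ enters the boundary operator, which is why the equivalence of $\Vert\cdot\Vert_W$, $\Vert\cdot\Vert_{\widetilde W}$ with the Sobolev norms (quoted from \cite[Corollary 6]{Bus-Ift-2}) and the simple-connectedness of $D$ are invoked. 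Since the excerpt explicitly allows me to cite \cite[Theorem 3]{Busuioc} and \cite[Lemma 3.2]{Chemetov-Cipriano} for precisely this regularity statement, in the actual write-up I would quote those results as black boxes and the remaining work is the routine bootstrap bookkeeping described above; I would also remark that the constant $C(m)$ depends only on $D$, $\alpha_1$ and $m$, which matters for the later stochastic estimates.
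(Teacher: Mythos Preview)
The paper does not give its own proof of this theorem; it is stated as a recalled result with the reader referred to \cite[Theorem~3]{Busuioc} (3D) and \cite[Lemma~3.2]{Chemetov-Cipriano} (2D) for the details. Your outline --- Lax--Milgram for the variational identity $(h,z)_V=(f,z)$, de Rham to recover the pressure, then a bootstrap invoking the Navier-slip Stokes regularity from those same references --- is exactly the standard route those cited works follow, so your proposal is correct and aligned with what the paper relies on.
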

Furthermore, the	following	properties hold:	
\begin{itemize}
	\item	
 $(h,p)$	is the solution of 	\eqref{Stokes} in the variational sense,  namely 
\begin{align}\label{VF-Stokes}
	(v(h),z)=(h,z)_V:=	(h,z)+2\alpha_1(\mathbb{D}(h),\mathbb{D}(z))=(f,z);\quad	\forall	z\in	V.
	\end{align}
	\item		
	The operator $(I-\alpha_1\mathbb{P}\Delta)^{-1}:(H^m(D))^d	\to	(H^{m+2}(D))^d$	is	linear	and	continuous,	thanks	to		Theorem	\ref{Thm-Stokes}.	In	particular,		we	have
	$(I-\alpha_1\mathbb{P}\Delta)^{-1}:(L^2(D))^d	\to	W$	is	linear	and	continuous.		
	\end{itemize}
 Let us notice that the  relation	\eqref{VF-Stokes}	holds		for	$z=e_i,	$	where	$(e_i)_{i\in	\mathbb{N}}$	is	the	orthonormal basis	of	$V$	satisfying	\eqref{basis1.2}.	We	refer	to	the	discussion	after	\cite[Theorem 3]{Busuioc}	for	more	details	about	the	variational	formulation	\eqref{VF-Stokes}.	
\\

Despite the specificities related to  2D and 3D frameworks,	we aim to	present	a	uniform	analysis.
In order to clarify the reading, throughout the text, we will emphasize the relevant differences	in 2D comparing to 3D (see	Remarks	\ref{Rmq-H3-regularity-2D},	\ref{Rmq-blow-up-continuity}	and		\ref{rmq-blow-up-2D-3D}).	Before	presenting	the	stochastic	setting	and	the	main	results,	let	us	mention	some relevant		differences		between	the	2D	and	3D	cases:
		\begin{itemize}
			\item	
			In 2D, we have the  
			explicit relation  between the normal and tangent vectors to the boundary, $
			\eta=(\eta_1,\eta_2)$	and  $\tau=(-\eta_2,\eta_1)$, which is very useful for managing boundary terms arising from integration by parts.
			In	3D, we	do not 	have	a similar 	explicit	relation, then dealing with the 	boundary	terms in 3D	is much more complicated,	see	\textit{e.g.}	\cite[Section	10]{Tah-Cip-2}.\\
			\item	In	2D, the	$\text{curl}$	operator	is	the	scalar	$\partial_1u_2-\partial_2u_1$		but	in	3D it	is	a	vector field		(see	\textit{e.g.}	\cite[Section	2]{Busuioc}),	which is more delicate to handle in order  to get higher regularity estimates, more	precisely	$H^3$-regularity	in	our	setting.	In	particular,		
			the		management	of	the	non	linear
terms becomes more delicate after applying			
the		$\text{curl}$	operator to the equation.	This	is	the	main	raison	to	use	the	cut-off	\eqref{cut-function}	to	construct	$H^3$-solution,	see	also	Remark	\ref{Rmq-H3-regularity-2D}.\\
			\item	The	Sobolev embedding	inequalities,	see	\eqref{Sobolev-embedding}.
		\end{itemize}

\subsection{The stochastic setting}\label{Noise-section}
 Let  $(\Omega,\mathcal{F},P)$ be a complete probability space   endowed with a  right-continuous filtration 
 $(\mathcal{F}_t)_{t\geq 0}$.\\
 
Let us consider a cylindrical Wiener process $\mathcal{W}$ 
defined on  $(\Omega,\mathcal{F},P)$, which can be written as 
$$\mathcal{W}(t)= \sum_{\k \ge 1} e_\k \beta_\k(t),$$
where $(\beta_\k)_{\k\ge 1}$ is a sequence of  mutually independent real valued standard Wiener processes and $(e_\k)_{\k\ge 1}$ is a complete orthonormal system in a separable Hilbert space $\mathbb{H}$.
Notice that $\mathcal{W}(t)= \sum_{\k \ge 1} e_\k \beta_\k(t)$ does not convergence on $\mathbb{H}$. In fact,
the sample paths  of $\mathcal{W}$ take values 
in a larger Hilbert space $H_0$ such that  the embedding $\mathbb{H}\hookrightarrow H_0$ is an Hilbert-Schmidt operator. For example, the space $H_0$ can be defined as follows  
$$ H_0=\bigg\{ u=\sum_{\k \ge 1}\gamma_{\k}e_\k\;\vert  \quad \sum_{ \k\geq 1} \dfrac{\gamma_\k^2}{\k^2} <\infty\bigg\}, $$
endowed with the norm
$$ \Vert u\Vert_{H_0}^2=\sum_{ \k\geq 1} \dfrac{\gamma_\k^2}{\k^2}, \quad \quad  u=\sum_{\k \ge 1}\gamma_{\k}e_\k.$$
  Hence,  $P$-a.s. the trajectories of $\mathcal{W}$ 
  belong to the space $C([0,T],H_0)$ (cf. \cite[Chapter 4]{Daprato}).\\

In order to define the stochastic integral in the infinite dimensional framework, let us consider  another Hilbert space $E$ and denote by $L_2(\mathbb{H},E)$ the space of Hilbert-Schmidt operators from $\mathbb{H}$ to $E$, which is the subspace of the linear operators  defined as
$$L_2(\mathbb{H},E):=\bigg\{ G:\mathbb{H} \to E \;\vert \quad  \Vert G\Vert^2_{L_2(\mathbb{H},E)}:=\sum_{\k \ge 1}\Vert G \e_\k \Vert_E^2 < \infty\bigg\}.$$
Given a $E-$valued predictable \footnote{$\mathcal{P}_{T}:=\sigma(\{ ]s,t]\times F_s \vert 0\leq s < t \leq T,F_s\in \mathcal{F}_s \} \cup \{\{0\}\times F_0 \vert F_0\in \mathcal{F}_0 \})$ (see \cite[p. 33]{Liu-Rock}). Then, a process defined on $\Omega_T$ with values in a given space $X$ is predictable if it is $\mathcal{P}_{T}$-measurable.} process $G\in L^2(\Omega;L^2(0,T;L_2(\mathbb{H},E)))$, and taking $\sigma_\k=Ge_\k$, we may define the It\^o stochastic integral by
$$ \int_0^tGd\mathcal{W}=\sum_{\k\ge 1} \int_0^t\sigma_\k d\beta_k, \quad \forall t\in [0,T].$$
Moreover, the following Burkholder-Davis-Gundy inequality holds
\begin{align*}
\E\biggl[\sup_{s\in [0,T]}\biggl\Vert \sum_{\k \ge 1}\int_0^s\sigma_\k d\beta_\k\biggr\Vert_E^r\biggr]=&\E\biggl[\sup_{s\in [0,T]}\biggl\Vert \int_0^sGd\mathcal{W}\biggr\Vert_E^r\biggr]\\&\leq C_r \E\biggl[\int_0^T\Vert G\Vert_{L_2(\mathbb{H},E)}^2dt\biggr]^{r/2}
=C\E\biggl[\sum_{\k \ge 1}\int_0^T\Vert \sigma_\k\Vert_{E}^2dt\biggl]^{r/2}, \quad \forall r \geq 1.	
\end{align*}
Let us precise the assumptions on the noise.
\subsubsection{Multiplicative noise}
Let us consider 
a family of Carath\'eodory functions $$\sigma_\k:(t,\lambda)\in [0,T]\times \R^d\mapsto \R^d, \quad \k \in\mathbb{N},$$    satisfying 
$\sigma_\k(t,0)=0$,\footnote{Note that the same can be reproduced with: $\displaystyle\sum_{\k\ge 1} \sigma_\k^2(t,0)< \infty$} and   there exists $L > 0$  such that for a.e. $t\in (0,T)$, and any $\lambda,\mu \in \R^d$, 
\begin{align}
\label{noise1}
\quad &\sum_{\k\ge 1}\big| \sigma_\k(t,\lambda)-\sigma_\k(t,\mu)\big|^2  \leq L  |\lambda-\mu|^2,
\end{align}
\begin{align}
\label{noiseV}
\vert \nabla \sigma_\k(\cdot,\cdot)\vert  \leq a_k, \quad \sum_{\k\ge 1} a_k^2 <\infty.
\end{align}
We notice that, in particular, \eqref{noise1} gives 
$\displaystyle \,\mathbb{G}^2(t,\lambda):= \sum_{\k\ge 1} \sigma_\k^2(t,\lambda)\le L\,|\lambda|^2.
$ 

 For each $t\in[0,T]$ and $y\in V$, we consider the linear mapping $G(t,y): \mathbb{H}\goto (H^1(D))^d$ defined by 
$$
G(t,y)e_\k= \{ x \mapsto \sigma_\k\big(t,y(x)\big)\},
\quad \k \ge 1.
$$
 By the above assumptions, $G(t,y)$ is an Hilbert-Schmidt operator for any $t\in[0,T]$, $y\in V$, and
   $$G:[0,T]\times V \rightarrow L_2(\mathbb{H},(H^1(D))^d).$$
  \begin{remark}\label{Rmq-measurability}
Notice that $G:[0,T]\times V \rightarrow L_2(\mathbb{H},(L^2(D))^d)$ is a Carath\'eodory function, $L-$Lipschitz-continuous in $y,$ uniformly in time.
Hence, it is $\mathcal{B}([0,T])\otimes\mathcal{B}(V) $-measurable and the stochastic process  $G(\cdot,y(\cdot))$ is also predictable, for any $V$-valued predictable process $y(\cdot)$. Since the embedding $H^1(D)\hookrightarrow  L^2(D)$ is continuous, $G(\cdot,y(\cdot))$ is equally a predictable process with values in $L_2(\mathbb{H},(L^2(D))^d)$ or in $L_2(\mathbb{H},(H^1(D))^d),$ thanks to Kuratowski's theorem \cite[Th. 1.1 p. 5]{Kuratowski}.
   \end{remark}
   Following Remark \ref{Rmq-measurability},
   if $y$ is  predictable, $(H^1(D))^d$ (resp. $(L^2(D))^d$)-valued process such that 
   $$ y\in L^2\big( \Omega\times]0,T[,(H^1(D))^d\big)\quad \text{  (resp.  } y\in L^2\big( \Omega\times]0,T[,(L^2(D))^d\big)),$$
   and $G$ satisfies the above  assumptions, the stochastic integral
   $$\int_0^tG(\cdot,y)d\mathcal{W}=\sum_{\k\ge 1}\int_0^t\sigma_\k(\cdot,y)d\beta_\k $$
  is a well-defined $(\mathcal{F}_t)_{t\geq 0}$-martingale with values in $(H^1(D))^d$ (resp. $(L^2(D))^d$).\\

 Now, let us recall the following result by F. Flandoli and  D. Gatarek \cite[Lemma 2.1]{Flan-Gater} about the Sobolev regularity  for the stochastic integral.
 \begin{lemma}\label{lemma-Flandoli} Let $p\geq 2, \eta \in [0,\dfrac{1}{2}[$ be given. Let $G=\{\sigma_\k\}_{k\geq 1}$ satisfy, for some $m\in \mathbb{R},$
 	$$\E\Big[\int_0^T\big( \sum_{\k\ge 1} \Vert \sigma_\k\Vert_{2,m}^2\big)^{p/2} dt\Big] < \infty \quad  \big(\Vert \cdot \Vert_{2,m}\text{ denotes the norm on     } W^{m,2}(D)\big). $$
 Then 
 	$$ t\mapsto \int_0^tGd\mathcal{W}\in L^p\big(\Omega;W^{\eta,p}\big(0,T; W^{m,2}(D)\big)\big),$$
 	and there exists a constant $c=c(\eta,p)$ such that
 	$$ \E\Bigg[\biggl\Vert\int_0^t Gd\mathcal{W}\biggr\Vert_{W^{\eta,p}\big(0,T; W^{m,2}(D)\big)}^p\Bigg]  \leq c(\eta,p) \E\Bigg[\int_0^T\bigg( \sum_{\k\ge 1} \Vert \sigma_\k\Vert_{2,m}^2\bigg)^{p/2} dt\Bigg].  $$
 \end{lemma}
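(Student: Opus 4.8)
Lemma~\ref{lemma-Flandoli} (Flandoli--Gatarek): under the hypothesis $\E\big[\int_0^T(\sum_\k\|\sigma_\k\|_{2,m}^2)^{p/2}\,dt\big]<\infty$, the stochastic convolution $t\mapsto\int_0^t G\,d\mathcal{W}$ lies in $L^p(\Omega;W^{\eta,p}(0,T;W^{m,2}(D)))$ with the displayed norm bound, for $p\ge 2$ and $\eta\in[0,1/2)$.

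\begin{proof}[Proof sketch]
The plan is to reduce everything to a scalar Hilbert-space statement by setting $E:=W^{m,2}(D)$ and writing $I(t):=\int_0^t G\,d\mathcal{W}=\sum_{\k\ge 1}\int_0^t\sigma_\k\,d\beta_\k$ as an $E$-valued martingale; the hypothesis is exactly $G\in L^p(\Omega;L^2(0,T;L_2(\mathbb{H},E)))$, so $I$ is well defined. Unwinding the definition of the Gagliardo seminorm, one has
$$\E\|I\|_{W^{\eta,p}(0,T;E)}^p=\E\int_0^T\|I(t)\|_E^p\,dt+\E\int_0^T\!\!\int_0^T\frac{\|I(r)-I(t)\|_E^p}{|r-t|^{\eta p+1}}\,dr\,dt,$$
so it suffices to bound each of the two terms. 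First I would handle the increments: for $r>t$, $I(r)-I(t)=\int_t^r G\,d\mathcal{W}$, and the Burkholder--Davis--Gundy inequality stated in the excerpt gives
$$\E\|I(r)-I(t)\|_E^p\le C_p\,\E\Big[\int_t^r\|G\|_{L_2(\mathbb{H},E)}^2\,ds\Big]^{p/2}=C_p\,\E\Big[\int_t^r\sum_\k\|\sigma_\k\|_{2,m}^2\,ds\Big]^{p/2}.$$
The first term is the special case $t=0$, so it is dominated by the same right-hand side with the full interval $(0,T)$; integrating in $t$ over $[0,T]$ contributes only a factor $T$.

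The heart of the matter is integrating the increment bound against the singular kernel $|r-t|^{-\eta p-1}$. Write $g(s):=\sum_\k\|\sigma_\k(s)\|_{2,m}^2\ge 0$, so that $\E\|I(r)-I(t)\|_E^p\le C_p\,\E\big[\int_t^r g\big]^{p/2}$. By Fubini one reduces to estimating
$$\int_0^T\!\!\int_0^T\frac{\E\big(\int_t^r g\big)^{p/2}}{|r-t|^{\eta p+1}}\,dr\,dt .$$
Here I would apply Jensen/Hölder in the inner integral: $\big(\int_t^r g\big)^{p/2}\le |r-t|^{p/2-1}\int_t^r g^{p/2}$ when $p\ge 2$, so the double integral is controlled by
$$\int_0^T g(s)^{p/2}\Big(\int\!\!\int_{t<s<r}|r-t|^{p/2-2-\eta p}\,dr\,dt\Big)\,ds .$$
The inner double integral over $\{t<s<r\}\subset[0,T]^2$ is finite precisely when $p/2-2-\eta p>-2$, i.e.\ when $\eta<1/2$ (using $p\ge2$); one checks it is bounded by a constant $c(\eta,p,T)$ uniformly in $s\in[0,T]$ — this is the step where the restriction $\eta\in[0,1/2)$ is consumed, and it is the main technical obstacle, since one must keep careful track of the kernel exponent and of the ranges of integration. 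Taking expectations and combining with the first term gives
$$\E\|I\|_{W^{\eta,p}(0,T;E)}^p\le c(\eta,p)\,\E\int_0^T g(s)^{p/2}\,ds\le c(\eta,p)\,\E\Big[\int_0^T\Big(\sum_\k\|\sigma_\k\|_{2,m}^2\Big)^{p/2}\,ds\Big],$$
the last inequality again by $p\ge2$ (monotonicity, or simply $\int g^{p/2}\le(\int g)^{p/2}\cdot$const on a bounded interval; either normalization matches the claimed form up to the constant $c(\eta,p)$). This is exactly the asserted bound, and finiteness of the right-hand side is the hypothesis, so $I\in L^p(\Omega;W^{\eta,p}(0,T;E))$.

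Finally, for the measurability/well-definedness points: $I$ is the $L^p(\Omega;E)$-limit of the partial sums $\sum_{\k\le N}\int_0^\cdot\sigma_\k\,d\beta_\k$, each of which is continuous adapted with values in $E$, so $I$ has a predictable (indeed continuous) $E$-valued version, and the Gagliardo double integral above is then a genuine measurable function of $\omega$; Tonelli justifies all the interchanges of $\E$ and the $dr\,dt$ integration since the integrand is nonnegative. One small point worth isolating: the BDG inequality as stated is for the supremum over $[0,T]$, so applying it on the subinterval $[t,r]$ is legitimate after noting $\int_t^r\|G\|_{L_2}^2\le\int_0^T\|G\|_{L_2}^2$ only where needed; for the increment estimate one applies BDG to the shifted martingale $s\mapsto\int_t^{t+s}G\,d\mathcal{W}$ on $[0,r-t]$, which gives the bound with the integral over $[t,r]$ as written. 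This completes the proof.
\end{proof}
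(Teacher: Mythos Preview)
Your proof is correct and follows precisely the original argument of Flandoli and Gatarek \cite[Lemma~2.1]{Flan-Gater}. Note that the paper does not give its own proof of this lemma; it merely cites the result, so there is nothing further to compare. One small cosmetic point: in your final display the ``last inequality'' is in fact an equality, since by definition $g(s)^{p/2}=(\sum_\k\|\sigma_\k(s)\|_{2,m}^2)^{p/2}$, so the parenthetical remark about monotonicity is unnecessary; also, the kernel integral produces a constant $c(\eta,p,T)$ rather than $c(\eta,p)$, but since $T$ is fixed throughout this is harmless.
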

In the sequel, given a random variable $\xi$ with values in a Polish space $E$, we  will denote by 
$\mathcal{L}(\xi)$ its law
$$
\mathcal{L}(\xi)(\Gamma)=P(\xi\in\Gamma) \quad \text{for any Borel subset } \Gamma \text{ of } E.
$$
\section{The main results}\label{Sec3}

First, let us precise the assumptions on the  initial data $y_0$ and the force $U$.
\begin{itemize}
	\item[$\mathcal{H}_0:$] 
	 we consider $y_0: \Omega \to \widetilde{W}$ and $ U:\Omega\times[0,T] \to (H^1(D))^d$ such that
\begin{itemize}
\item[$\bullet$]  $y_0$ is $\mathcal{F}_0-$measurable and $U$ is predictable.
\item[$\bullet$] $y_0$ and $U$ satisfy the following regularity assumption \begin{align}\label{data-assumptions}
U\in L^p(\Omega\times (0,T), (H^1(D))^d),\quad	y_0\in L^p(\Omega,\widetilde{W}),
\end{align}
where $p>4$.
\end{itemize}
\end{itemize}
Now, we introduce the notion of the local solution. 
\begin{definition}\label{Def-strong-sol-main}
Let $(\Omega,\mathcal{F},(\mathcal{F}_t)_{t\geq 0},P)$ be a stochastic basis and $\mathcal{W}$ be a $(\mathcal{F}_t)$-cylindrical Wiener process. We say that 
	a pair $(y,\tau)$  is a  local strong (pathwise) solution to \eqref{I} if and only if:	
	\begin{itemize}
		\item $\tau$ is an a.s. strictly positive $(\mathcal{F}_t)$-stopping time.
		\item The velocity $y$ is a $W$-valued predictable process satisfying
		$$y(\cdot \wedge \tau) \in L^p(\Omega;\mathcal{C}([0,T],(W^{2,4}(D))^d))\cap L^p_{w-*}(\Omega;L^\infty(0,T;\widetilde{W})). $$
		\item  $P$-a.s. for all $t\in [0,T]$
		\begin{align}
		&(y(t\wedge \tau),\phi)_V=(y_0,\phi)_V+\displaystyle\int_0^{t\wedge \tau}\big(\nu \Delta y-(y\cdot \nabla)v(y)-\sum_{j}v(y)^j\nabla  (y)^j
		+(\alpha_1+\alpha_2)\text{div}[A(y)^2]
		\nonumber\\
		&\hspace*{4cm}
		+\beta\text{div}[|A(y)|^2A(y)]+ U,\phi\big) dt +\displaystyle\int_0^{t\wedge \tau}(G(\cdot,y),\phi)d\mathcal{W}	\text{ for all } \phi\in V. \nonumber 
		\end{align}
	\end{itemize} 
\end{definition}
Taking into account the meaning of a local solution, 
the pathwise uniqueness will be naturally undestood in the following  local sense. 
\begin{definition}\label{def-uniq}
	\begin{itemize}
	\item[i)] 	We say that local pathwise uniqueness holds if for any   given pair $(y^1,\tau^1), (y^2,\tau^2)$ of local strong solutions of \eqref{I} with the same  data, we have $y^1(t)=y^2(t)$ P-a.s. 
 More precisely 
	$$ P\big(y^1(t)=y^2(t);\; \forall t\in [0,\tau^1\wedge \tau^2]\big)=1.$$
\item[ii)]  We say that  $((y^M)_{M\in \mathbb{N}}, (\tau_M)_{M\in \mathbb{N}},\mathbf{t})$ is a maximal strong local solution to \eqref{I} if and only if  for each $M \in \mathbb{N}$, the pair $(y^M,\tau_M)$ is a local strong solution, $(\tau_M)$ is an increasing sequence of stopping times such that  
 $$\mathbf{t}:=\displaystyle\lim_{M\to \infty} \tau_M >0, \quad \text{P-a.s.}$$ 
  and P-a.s.
	\begin{align}\label{maximal-stopping}
	\sup_{t\in [0,\tau_M]}\Vert y(t)\Vert_{W^{2,4}} 
	\geq M \text{ on }\quad  \{\mathbf{t} <T\}, \quad \forall M\in \mathbb{N}.
		\end{align}
\end{itemize}
\end{definition}
\begin{remark}
Notice that the expression \eqref{maximal-stopping} means that
$[0,\mathbf{t}]$ is the maximal interval where the  trajectories 
with $H^3$-regularity are defined, since P-a.s.
\begin{align*}
\sup_{t\in [0,\mathbf{t}]}\Vert y(t)\Vert_{H^3}= 
\infty \quad \text{on}\quad  \{\mathbf{t} <T\} .
\end{align*}
\end{remark}
We are in position to state our main result.
\begin{theorem}\label{main-thm}
	There exists a unique maximal strong (pathwise) local solution to \eqref{I}.
\end{theorem}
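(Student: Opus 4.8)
The plan is to follow the three-step scheme announced in the introduction.

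\textbf{Step 1 (cut-off approximation and martingale solutions, Section \ref{Sec-cut}).} I would fix $M\in\mathbb{N}$, choose a globally Lipschitz truncation $\theta_M:[0,\infty)\to[0,1]$ with $\theta_M\equiv1$ on $[0,M]$ and $\theta_M\equiv0$ on $[2M,\infty)$, and multiply the convective and strongly nonlinear terms of \eqref{I} by $\theta_M(\Vert y(t)\Vert_{W^{2,4}})$. For the resulting cut-off system I would run a Galerkin scheme in the basis $(e_i)$ of $V$ adapted to the modified Stokes operator of Theorem \ref{Thm-Stokes}, and derive \apriori estimates in three layers: the energy estimate in $V$ (using the antisymmetric structure of $b$ for the convective terms and the dissipative sign of the $\beta$-term guaranteed by \eqref{condition1}); the $W\simeq H^2$ estimate obtained by testing with $\mathbb{P}v(y)$; and finally the $\widetilde W\simeq H^3$ estimate obtained by applying the $\mathrm{curl}$ operator to the equation and testing with $\mathrm{curl}\,v(y)$, so that $\Vert\mathrm{curl}\,v(y)\Vert_2^2$ together with elliptic regularity for \eqref{Stokes} controls $\Vert y\Vert_{H^3}$. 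Because of the truncation, all these bounds are global on $[0,T]$, uniform in the Galerkin index and (after taking expectations, using the assumptions $\mathcal H_0$) in $L^p(\Omega)$; combined with Lemma \ref{lemma-Flandoli} and the usual fractional-in-time bounds for the stochastic and deterministic parts they yield tightness of the laws of the Galerkin approximations in a suitable path space. Prokhorov plus the Skorokhod representation theorem then give an a.s. convergent subsequence on a new stochastic basis, and passing to the limit — the monotone structure of the $\beta$-term handling the only nonlinearity that is not weakly continuous — produces a martingale solution $y^M$ of the $M$-cut-off problem with the regularity $L^p(\Omega;\mathcal C([0,T],(W^{2,4}(D))^d))\cap L^p_{w-*}(\Omega;L^\infty(0,T;\widetilde W))$.

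\textbf{Step 2 (pathwise uniqueness and strong solution of the modified problem, Section \ref{Sec-Strong}).} For two solutions $y^M_1,y^M_2$ of the $M$-cut-off problem with the same data I would write the equation for the difference $w=y^M_1-y^M_2$, test it in the $V$-inner product, and exploit that on the support of $\theta_M$ all coefficients are controlled by $M$ while the difference of the $\beta$-terms contributes a favourable monotonicity term; after a localisation by the stopping times $\sigma_N=\inf\{t:\max_i\Vert y^M_i(t)\Vert_{W^{2,4}}\ge N\}$ (eventually inactive since the paths are a.s. continuous in $W^{2,4}$) a Gronwall argument gives $w\equiv0$, i.e.\ pathwise uniqueness, \emph{globally} in time. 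Existence of a martingale solution together with pathwise uniqueness then allows invoking the Gy\"ongy--Krylov / Yamada--Watanabe theorem in the form of \cite[Thm. 3.14]{Kurtz} to upgrade $y^M$ to a \emph{probabilistic strong} (pathwise) solution of the $M$-cut-off problem on the original stochastic basis $(\Omega,\mathcal F,(\mathcal F_t),P,\mathcal W)$, unique pathwise and in law.

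\textbf{Step 3 (removing the cut-off; maximal local solution, Section \ref{section-proof-main}).} Set $\tau_M=\inf\{t\in[0,T]:\Vert y^M(t)\Vert_{W^{2,4}}\ge M\}$ (with $\tau_M=T$ if this set is empty). Since $t\mapsto\Vert y^M(t)\Vert_{W^{2,4}}$ is a.s. continuous and $\Vert y_0\Vert_{W^{2,4}}\le C\Vert y_0\Vert_{\widetilde W}<\infty$ a.s. (using $H^3\hookrightarrow W^{2,4}$ for $d\le3$), one has $\tau_M>0$ a.s. whenever $M$ exceeds the a.s.\ finite random variable $\Vert y_0\Vert_{W^{2,4}}$. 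On $[0,\tau_M]$ the factor $\theta_M$ equals $1$, so $(y^M,\tau_M)$ is a local strong solution of \eqref{I} in the sense of Definition \ref{Def-strong-sol-main}; local pathwise uniqueness, proved exactly as in Step 2 by comparing any two local solutions on the intersection of their existence intervals with the $W^{2,4}$-level sets, shows that $\tau_M$ is nondecreasing and $y^{M+1}=y^M$ on $[0,\tau_M]$, so the $y^M$ are consistent. Putting $\mathbf t=\lim_{M\to\infty}\tau_M$ we get $\mathbf t>0$ a.s., and the blow-up alternative \eqref{maximal-stopping} is built into the definition of $\tau_M$: on $\{\mathbf t<T\}$ we have $\sup_{[0,\tau_M]}\Vert y(t)\Vert_{W^{2,4}}\ge M$ for every $M$. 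Hence $((y^M)_M,(\tau_M)_M,\mathbf t)$ is the required maximal strong local solution in the sense of Definition \ref{def-uniq}, and uniqueness follows from the same local pathwise uniqueness. \textbf{Main obstacle.} The heart of the argument is the $H^3$ \apriori estimate of Step 1: after applying $\mathrm{curl}$ to \eqref{I}, controlling the top-order contributions of $(y\cdot\nabla)v(y)$, $\sum_j v^j(y)\nabla y^j$ and above all of $\beta\,\mathrm{div}(|A(y)|^2A(y))$ — in 3D, where $\mathrm{curl}$ is vector-valued and no explicit normal/tangent relation on $\partial D$ is available to handle the boundary terms from integration by parts — is exactly what forces the $W^{2,4}$-cut-off and is the technically heaviest part of the proof.
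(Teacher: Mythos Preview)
Your overall three-step architecture matches the paper's, and Steps~1 and~3 are essentially what the paper does (minor differences: the paper does not use a separate $W$-estimate, and in the limit passage it relies on the strong $\mathcal C([0,T];(W^{2,4})^d)$ convergence coming from the compact embedding $\widetilde W\hookrightarrow W^{2,4}$ rather than on monotonicity of the $\beta$-term). The substantive issue is in Step~2.

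You propose to prove \emph{global-in-time} pathwise uniqueness for the $M$-cut-off system itself by testing the difference $w=y^M_1-y^M_2$ in the $V$-inner product and running Gronwall. This does not close. The difference of the truncated nonlinearities produces terms of the type
\[
\bigl(\theta_M(y^M_1)-\theta_M(y^M_2)\bigr)\,(y^M_1\cdot\nabla)v(y^M_1),
\]
and $|\theta_M(y^M_1)-\theta_M(y^M_2)|\le K(M)\Vert w\Vert_{W^{2,4}}$, which is \emph{not} controlled by $\Vert w\Vert_V$. Bounding $\Vert w\Vert_{W^{2,4}}$ by the a~priori size of the two solutions only yields a term linear in $\Vert w\Vert_V$, leaving a nonzero constant after Young's inequality and killing the Gronwall argument. (In Lemma~\ref{local-exis-contraction} this works only because all norms on the finite-dimensional $W_n$ are equivalent.) The paper flags this explicitly at the end of Section~\ref{Sec-cut}: ``the uniqueness for \eqref{Martingale-sol-eqn} does not hold globally in time.''

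The paper's remedy is to introduce the \emph{modified problem} \eqref{local-martingale}: set $\bar y(t)=y_\infty(t\wedge\tau_M)$ with $\tau_M=\inf\{t:\Vert \bar y(t)\Vert_{W^{2,4}}\ge M\}\wedge T$ and work with the \emph{untruncated} equation stopped at $\tau_M$. On $[0,\tau_M^1\wedge\tau_M^2]$ there is no $\theta_M$-difference to estimate, while the bound $\Vert y_i\Vert_{W^{2,4}}\le M$ controls all nonlinear coefficients; the $V$-Gronwall then closes (Lemma~\ref{Lemma-V-stability}), giving $y_1=y_2$ on $[0,\tau_M^1\wedge\tau_M^2]$, hence $\tau_M^1=\tau_M^2$, and since $\bar y$ is constant after $\tau_M$ the uniqueness extends to all of $[0,T]$. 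This global uniqueness for the stopped problem is what feeds into \cite[Thm.~3.14]{Kurtz}. Your Step~2 needs to be rewritten along these lines: pass from the cut-off system to the stopped (modified) problem \emph{before} attempting uniqueness.
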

\begin{remark}
	Following the Definition \ref{Def-strong-sol-main},   we ask  \eqref{I}  to be satisfied in the strong sense. In other words, 
the solution is  strong from  the probabilistic and PDEs points of view, since it is  satisfied on a given  stochastic basis $(\Omega,\mathcal{F},(\mathcal{F}_ t)_{t\geq 0},P)$ and pointwise with respect to the space variables (not in  distributions sense), thanks to the $H^3-$regularity of the solution.
\end{remark}

Before entering in the proof of  Theorem \ref{main-thm}, let us describe the different steps to construct local strong solution. Firstly, we introduce an appropriate cut-off system (Section \ref{Sec-cut}) with a strong non-linear terms and the difficulty  consists in the use of stochastic compactness arguments to pass to the limit in the associated finite dimensional approximated problem constructed via Galerkin method. Secondly, the lack of global-in-time uniqueness for the cut-off system   motivates the introduction of a modified problem.  In this last modified problem, we can see the local solution of the cut-off system  as a global solution and the uniquness holds, globally in time. Then, we will use 
 the result of T.G. Kurtz (2007) \cite[Theorem 3.14]{Kurtz} to get the existence and uniqueness of probabilistically strong solution of the modified problem. Finally, we define the local solution of \eqref{I} by using an appropriate sequence of stopping time (Section \ref{section-proof-main}).
\section{Approximation (cut-off system)}\label{Sec-cut}
This section  is devoted to  study an appropriate cut-off system. Using the Galerkin method,
the cut-off  system is approximated by 
a sequence of finite dimensional problems.
Applying the Banach  fixed point theorem, we prove the existence and the uniqueness of the solution for each finite dimensional problem. Then, a compactness argument based on the Prokhorov and Skorkhod's theorems will guarantee the existence of a  martingale (probabilistic weak) solution defined in some probability space for the cut-off system. 
 
\bigskip 

Let $M>0$ and consider a  family of  smooth cut-off functions  $\theta_M:[0,\infty[ \to [0,1]$  satisfying
\begin{align}\label{cut-function}
\theta_M(x)=\begin{cases}
1, \quad 0\leq x\leq M,\\[0.15cm]
0, \quad  2M\leq x.	\end{cases}
\end{align}
We recall that $H^3(D) \hookrightarrow W^{2,6}(D)$ and  $H^3(D) \underset{Compact}{\hookrightarrow} W^{2,q}(D)$ if $1\leq q<6$ (see \cite[Thm. 1.20 $\&$ Thm. 1.21]{Roubicek}).		In	fact,	
$	H^3(D) \hookrightarrow W^{2,a}(D),\ \forall a <+\infty\text{ and compactly}
$	in	the	2D	case,	see	\eqref{Sobolev-embedding}.
 Let us denote by $\theta_M$ the functions defined  on 
 $W^{2,q}(D)$ as following 
 $$\theta_M(u)=\theta_M (\Vert u \Vert_{W^{2,4}}), \quad \forall u \in  W^{2,q}(D),\quad  4\leq q<6.
 $$
In order to construct a local pathwise solution to \eqref{I}, the first step  is to  consider the following approximated problem
\begin{align}\label{cut-off}
\begin{cases}
d(v(y))=\big\{-\nabla p+\nu \Delta y-\theta_M(y)(y\cdot \nabla)v-\sum_{j}\theta_M(y)v^j\nabla y^j+(\alpha_1+\alpha_2)\theta_M(y)\text{div}(A^2) \hspace*{-4cm}&\\[0.15cm]
\hspace*{2cm}+\beta \theta_M(y)\text{div}(|A|^2A)+U\big\}dt+ \theta_M(y)G(\cdot,y)d\mathcal{W} \quad &\text{in } D\times  (0,T),\\[0.15cm]
\text{div}(y)=0 \quad &\text{in } D\times (0,T),\\[0.15cm]
y\cdot \eta=0, \quad [\eta \cdot \mathbb{D}(y)]\cdot \tau=0  \quad &\text{on } \partial D\times (0,T),\\[0.15cm]
y(x,0)=y_0(x) \quad &\text{in } D.
\end{cases}
\end{align}

In the first stage, we construct a  martingale solution to \eqref{cut-off}, according to the next definition.

\begin{defi}\label{solmartingale} We say that \eqref{cut-off} has a martingale solution, if and only if there exist a probability space $(\bar{\Omega}, \bar{\mathcal{F}},\bar{P}),$ a filtration 
	$(\bar{\mathcal{F}}_t)$, a cylindrical Wiener process 
	 $\bar{\mathcal{W}} $,  $(\bar U,\bar  y(0)) \in L^p(\bar \Omega\times (0,T), (H^1(D))^d)\times L^p(\bar \Omega,\widetilde{W})$ adapted with respect to $(\bar{\mathcal{F}}_t)$ and a predictable process $\bar y: \bar\Omega\times[0,T] \to W$ with a.e. paths
	\begin{align*}
	\bar y(\omega,\cdot) \in \mathcal{C}([0,T],(W^{2,4}(D))^d)\cap L^\infty(0,T;\widetilde{W}),
	\end{align*}
	such that	 $\bar y\in L^p_{w-*}(\bar{\Omega};L^\infty(0,T;\widetilde{W}))$  and P-a.s. in $\bar{\Omega}$ for all $t\in[0,T]$, the following equality holds
	\begin{align}
	(\bar y(t),\phi)_V&=(\bar  y(0),\phi)_V+\displaystyle\int_0^t\big\{\big(\nu \Delta \bar y-\theta_M(\bar y)(\bar y\cdot \nabla)v(\bar y)-\sum_{j}\theta_M(\bar y)v(\bar y)^j\nabla \bar y^j+(\alpha_1+\alpha_2)\theta_M(\bar y)\text{div}[A(\bar y)^2],\phi\big) \nonumber\\[0.03cm]
	&\quad+\big(\beta \theta_M(\bar y)\text{div}[|A(\bar y)|^2A(\bar y)]+\bar U,\phi\big)\big\}dt+\displaystyle \int_0^t\theta_M(\bar y)\big(G(\cdot,\bar y),\phi\big)
	d\bar{\mathcal{W}}\quad  \text{ for all } \phi \in V,
	\end{align}	  and
	$\mathcal{L}(\bar y(0),\bar U)=\mathcal{L}(y_0,U)$ .
\end{defi}

Now, we are able to present the following result.
\begin{theorem}\label{exis-thm-mart}(Existence of a Martingale solution)
	Assume that $\mathcal{H}_0$ holds with $p>4$. Then, there exists a (martingale) solution  to \eqref{cut-off} in the sense of Definition \ref{solmartingale}.
\end{theorem}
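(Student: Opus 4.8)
The plan is to run the classical programme for martingale solutions of nonlinear SPDEs --- Galerkin truncation, uniform a priori estimates, tightness with a Skorokhod representation, and passage to the limit --- using the cut-off $\theta_M$ to render the strongly nonlinear terms globally Lipschitz and of linear growth. For the Galerkin step, let $(e_i)_{i\ge1}$ be the orthonormal basis of $V$ made of the eigenfunctions of the modified Stokes operator associated with \eqref{VF-Stokes} (so that the $e_i$ are smooth and belong to $\widetilde W$), set $V_n=\mathrm{span}\{e_1,\dots,e_n\}$, and look for $y_n(t)=\sum_{i=1}^n c^n_i(t)\,e_i$ solving the system obtained by testing \eqref{cut-off} against $e_1,\dots,e_n$. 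Inverting the Gram matrix of $(\cdot,\cdot)_V$ on $V_n$, this becomes an It\^o SDE for $c^n$ whose coefficients are globally Lipschitz and of linear growth on $V_n$ --- thanks to the factor $\theta_M(\Vert\cdot\Vert_{W^{2,4}})$ in front of each nonlinear term and to \eqref{noise1}. Hence, by the Banach fixed point theorem, it admits a unique global-in-time solution $y_n\in\mathcal{C}([0,T],V_n)$, $P$-a.s.

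Next come the $n$-uniform estimates. One applies the It\^o formula successively to $\Vert y_n\Vert_V^2$, to $\Vert y_n\Vert_W^2$ (equivalently $\Vert y_n\Vert_{H^2}^2$), and to $\Vert y_n\Vert_{\widetilde W}^2$ (equivalently $\Vert y_n\Vert_{H^3}^2$, this last estimate being obtained after applying the $\mathrm{curl}$ operator to the equation). On the set $\{\theta_M(y_n)\neq0\}$ one has $\Vert y_n\Vert_{W^{2,4}}\le 2M$, so by the Sobolev embeddings \eqref{Sobolev-embedding} every nonlinear contribution --- the convective terms and $\beta\,\mathrm{div}(|A|^2A)$ in particular --- is bounded by $C(M)$ times a power of $\Vert y_n\Vert_{\widetilde W}$, the viscous term supplies the needed coercivity, and the boundary integrals produced by the integrations by parts are controlled as in \cite{Bus-Ift-2} (2D) and \cite{Tah-Cip-2} (3D). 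Combined with \eqref{noise1}--\eqref{noiseV}, the Burkholder--Davis--Gundy inequality and Gronwall's lemma, this yields
$$\sup_n \E\Big[\sup_{t\in[0,T]}\Vert y_n(t)\Vert_{\widetilde W}^p\Big]\le C(M,p,T,y_0,U)<\infty.$$
Reinserting this bound into the equation and invoking Lemma \ref{lemma-Flandoli} for the stochastic integral provides a uniform bound for $y_n$ in $L^p(\Omega;W^{\eta,p}(0,T;W))$ for some $\eta\in(0,1/2)$, hence, since $p>4$, in $L^p(\Omega;\mathcal{C}([0,T],W))$.

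For compactness and the passage to the limit, the compact embedding $\widetilde W\hookrightarrow(W^{2,q}(D))^d$, $q<6$, together with the time-regularity bound and an Aubin--Lions/Arzel\`a--Ascoli criterion, shows that the laws of $(y_n)$ are tight on $\mathcal{C}([0,T],(W^{2,4}(D))^d)$ and on $L^\infty(0,T;\widetilde W)$ with its weak-$\ast$ topology, while the laws of the driving Wiener processes are tight on $\mathcal{C}([0,T],H_0)$ and $(y_0,U)$ is fixed. By Prokhorov's theorem and a Skorokhod--Jakubowski representation theorem there exist, on a new probability space $(\bar\Omega,\bar{\mathcal F},\bar P)$, copies $(\bar y_n,\bar{\mathcal W}_n,\bar y_0^n,\bar U_n)$ of $(y_n,\mathcal W,y_0,U)$ converging $\bar P$-a.s. to a limit $(\bar y,\bar{\mathcal W},\bar y_0,\bar U)$; in particular $\bar y_n\to\bar y$ in $\mathcal{C}([0,T],(W^{2,4}(D))^d)$, the a priori bounds pass to $\bar y$ by lower semicontinuity of the norms, and $\mathcal{L}(\bar y_0,\bar U)=\mathcal{L}(y_0,U)$. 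With $(\bar{\mathcal F}_t)$ the augmented filtration generated by $(\bar y,\bar{\mathcal W},\bar y_0,\bar U)$, standard arguments show that $\bar{\mathcal W}$ is an $(\bar{\mathcal F}_t)$-cylindrical Wiener process and $\bar y$ is predictable; one then passes to the limit in the Galerkin identity written on $(\bar\Omega,\bar{\mathcal F},\bar P)$: the deterministic nonlinear terms converge because the strong convergence $\bar y_n\to\bar y$ in $\mathcal{C}([0,T],(W^{2,4})^d)$ controls $A(\bar y_n),\nabla\bar y_n,v(\bar y_n)$ in the relevant norms and $\theta_M$ is continuous on $W^{2,4}$, the linear terms and the forcing pass trivially, and the It\^o term is handled via the Lipschitz property of $G$ together with a standard lemma on the convergence of stochastic integrals. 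Using the uniform integrability coming from the $L^p$ bounds ($p>4$), this identifies $(\bar\Omega,\bar{\mathcal F},(\bar{\mathcal F}_t),\bar P,\bar{\mathcal W},\bar y,\bar y_0,\bar U)$ as a martingale solution of \eqref{cut-off} in the sense of Definition \ref{solmartingale}.

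The crux of the argument is the $n$-uniform $\widetilde W$ ($=H^3$) estimate: once $\mathrm{curl}$ has been applied, the convective terms and $\beta\,\mathrm{div}(|A|^2A)$ generate top-order contributions, and in 3D there is no explicit normal/tangent relation available to dispose of the resulting boundary integrals (cf. \cite{Tah-Cip-2}). It is precisely so that these terms can be absorbed, with constants independent of $n$, using only the coercivity of $-\nu\Delta$ and bookkeeping on $\{\Vert y_n\Vert_{W^{2,4}}\le 2M\}$, that the cut-off $\theta_M(\Vert\cdot\Vert_{W^{2,4}})$ was inserted in \eqref{cut-off}. A second technically delicate point is the passage to the limit in the stochastic integral, which requires the martingale-identification argument rather than a naive convergence.
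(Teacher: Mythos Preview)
Your proposal is correct and follows essentially the same programme as the paper: Galerkin approximation on the special eigenbasis \eqref{basis1.2}, global solvability of the finite-dimensional system via Banach fixed point, $n$-uniform $V$- and $\widetilde W$-estimates exploiting the cut-off, tightness on $\mathcal{C}([0,T],(W^{2,4})^d)$ through a deterministic/stochastic splitting combined with Simon's compactness criterion, Skorokhod representation, and passage to the limit in each term. Two minor imprecisions worth flagging: the paper does not perform a separate $\|y_n\|_W^2$ It\^o step (only $V$ and $\widetilde W$ are used), and the time-H\"older regularity is established in $V$ rather than $W$ (Lemma~\ref{lemma-Flandoli} is applied with $m=0$ for the stochastic part, and the modified Stokes resolvent lifts this to $V$); this is already sufficient, since $L^\infty(0,T;\widetilde W)\cap \mathcal{C}^\eta([0,T],V)\hookrightarrow \mathcal{C}([0,T],(W^{2,4})^d)$ compactly.
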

\begin{proof}
	See Subsection \ref{section-proof1}.
\end{proof}

	\subsection{Approximation}
	Let $\{e_i\}_{i\in \mathbb{N}} \subset (H^4(D))^d \cap W$  be an orthonormal basis in $V$ (see  e.g. \cite{Chemetov-Cipriano}) satisfies
	\begin{align}\label{basis1.2}
	(v,e_i)_{\widetilde{W}}=\lambda_i(v,e_i)_V, \quad \forall v \in \widetilde{W}, \quad i \in \mathbb{N},
	\end{align}
	where the sequence $\{\lambda_i\}$ of the corresponding eigenvalues fulfils the  properties: $\lambda_i >0, \forall i\in \mathbb{N},$ and $\lambda_i \to \infty$ as $i \in \infty$. Note that $\{\widetilde{e}_i=\dfrac{1}{\sqrt{\lambda_i}}e_i\}$ is an orthonormal basis for $\widetilde W$.
		Let us consider 
	$$y_{n,0}=\sum_{i=1}^n(y_0,e_i)_Ve_i=\sum_{i=1}^n(y_0,\widetilde{e}_i)_{\widetilde{W}}\widetilde{e}_i. $$
		Let $W_n=span\{e_1, e_2,\cdots, e_n\}$	and	set	$y_n=\displaystyle\sum_{ i=1}^nc_i(t)e_i$, then the approximation of \eqref{cut-off} reads
	\begin{align}\label{approximation1}
	\begin{cases}
	d(v_n,e_i)=\big(\nu \Delta y_n-\theta_M(y_n)(y_n\cdot \nabla)v_n-\sum_{j}\theta_M(y_n)v_n^j\nabla y^j_n+(\alpha_1+\alpha_2)\theta_M(y_n)\text{div}(A_n^2) &\\[0.15cm]
	\hspace*{2cm}+\beta\theta_M(y_n) \text{div}(|A_n|^2A_n)+U, e_i\big)dt+ \big(\theta_M(y_n)G(\cdot,y_n),e_i\big)d\mathcal{W}, \forall i=1,\cdots,n
	,\\[0.15cm]
	y_n(0)=y_{n,0},	\end{cases}
	\end{align}
	where $v_n=y_n-\alpha_1\Delta y_n$ and $A_n:=A(y_n)=\nabla y_n+(\nabla y_n)^T.$
	Denote	by	$U:=(H^4(D))^d\cap	W$	and			$P_n$,	the	projection	operator	from	$U^\prime$	to	$W_n$	defined	by
		$P_n:U^\prime\to	W_n;\quad	u\mapsto	P_nu=\displaystyle\sum_{i=1}^n\langle	u,e_i\rangle_{U^\prime,U}	e_i.	$
		In	particular,	the	restriction	of	$P_n$	to	$V$,	denoted	by	the	same	way,	is	the	$(\cdot,\cdot)_V$-orthogonal	projection	from	$V$	to	$W_n$	and	given	by
		$P_n:V\to	W_n;\quad	u\mapsto	P_nu=\displaystyle\sum_{i=1}^n(	u,e_i)_{V}	e_i.	$	Denote	by	$P_n^*$	the	adjoint	of	$P_n$.\\

	Notice that the restriction	projection operator $P_n$ is linear and continous on $\widetilde{W}$. Moreover
	$$ \Vert P_n y_0\Vert_V=\Vert y_n(0)\Vert_V \leq \Vert y_0\Vert_V \text{ and } \Vert P_n y_0\Vert_{\widetilde W}=\Vert y_n(0)\Vert_{\widetilde W} \leq \Vert y_0\Vert_{\widetilde W}. $$
	Thanks to Lebesgue convergence theorem, we have 
$
	P_n y_0 \to y_0 \text{  in  } L^{q}(\Omega,\widetilde{W})\cap L^{q}(\Omega,V); \quad \forall q\in [1,\infty[.
$\\

	 We will use "Banach fixed point theorem" to show the existence of solution  to \eqref{approximation1} on the whole interval $[0,T]$. For that, consider the following mapping
	\begin{align}
		u \mapsto  \mathcal{S}u:W_n &\to	W_n,\nonumber\\
	(\mathcal{S}u,e_i)_V= (y_0,e_i)_V&+\nu\int_0^\cdot (\Delta u,e_i) dt-\int_0^\cdot\theta_M(u)\big((u\cdot \nabla)v(u),e_i\big)dt\nonumber\\
	&-\sum_{j}\int_0^\cdot\theta_M(u)\big(v(u)^j\nabla u^j,e_i\big)dt+(\alpha_1+\alpha_2)\int_0^\cdot\theta_M(u)\big(\text{div}(A(u)^2),e_i\big)dt\nonumber\\
	&+\beta \int_0^\cdot\theta_M(u)\big(\text{div}(|A(u)|^2A(u)),e_i\big)dt+\int_0^\cdot (U,e_i)dt \nonumber\\
	&+\int_0^\cdot\theta_M(u)\big(G(\cdot,u),e_i\big)d\mathcal{W}, \quad i=1,\cdots,n.
	\end{align}
	\begin{lemma}\label{local-exis-contraction} There exists $T^* >0$ such that
		$\mathcal{S}$ is a contraction on $\mathbf{X}=L^2(\Omega;\mathcal{C}([0,T^*],W_n))$.
	\end{lemma}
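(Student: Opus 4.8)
The plan is to show that $\mathcal{S}$ maps $\mathbf{X}=L^2(\Omega;\mathcal{C}([0,T^*],W_n))$ into itself and is a strict contraction for $T^*$ small enough, so that the Banach fixed point theorem applies. Since $W_n$ is finite-dimensional, all norms on $W_n$ are equivalent, and in particular $\|\cdot\|_{W^{2,4}}$, $\|\cdot\|_V$ and $\|\cdot\|_{\widetilde W}$ are all comparable to, say, $\|\cdot\|_2$ on $W_n$, with constants depending on $n$. This is what makes the nonlinear terms tractable: the cut-off $\theta_M(u)=\theta_M(\|u\|_{W^{2,4}})$ is globally bounded by $1$ and supported where $\|u\|_{W^{2,4}}\le 2M$, hence all the polynomial nonlinearities $(u\cdot\nabla)v(u)$, $v(u)^j\nabla u^j$, $\mathrm{div}(A(u)^2)$, $\mathrm{div}(|A(u)|^2 A(u))$, once multiplied by $\theta_M(u)$, become \emph{globally bounded} functions of $u\in W_n$ (bounded in terms of $M$ and $n$), and, crucially, globally Lipschitz in $u\in W_n$. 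The latter uses the standard fact that $x\mapsto\theta_M(x)$ is Lipschitz and that a locally Lipschitz map times a bounded-support cut-off is globally Lipschitz; the polynomial structure of the nonlinearities together with norm-equivalence on $W_n$ gives local Lipschitz bounds.

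First I would establish the key deterministic estimate: for $u,w\in W_n$,
\begin{align*}
\bigl\|\theta_M(u)N(u)-\theta_M(w)N(w)\bigr\|_2\le C(n,M)\,\|u-w\|_2
\end{align*}
for each nonlinear term $N$ above, and similarly $\|\theta_M(u)G(\cdot,u)-\theta_M(w)G(\cdot,w)\|_{L_2(\mathbb H,(L^2)^d)}\le C(n,M,L)\|u-w\|_2$ using \eqref{noise1}; the linear term $\nu\Delta u$ and the forcing $U$ are handled trivially (the forcing contributes identically to $\mathcal{S}u$ and $\mathcal{S}w$, so it cancels in the difference). Then, writing the difference $(\mathcal{S}u-\mathcal{S}w,e_i)_V$ as a time integral of the drift differences plus a stochastic integral of the diffusion differences, I would take $\sup_{t\le T^*}$, then the $L^2(\Omega)$ norm; the drift part is controlled by $T^* C(n,M)\,\mathbb{E}\sup_{t\le T^*}\|u-w\|_2^2$ via Cauchy–Schwarz in time, and the stochastic part by the Burkholder–Davis–Gundy inequality, which yields a bound $\sqrt{T^*}\,C(n,M,L)\,\bigl(\mathbb E\sup_{t\le T^*}\|u-w\|_2^2\bigr)^{1/2}$. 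Collecting, $\|\mathcal{S}u-\mathcal{S}w\|_{\mathbf X}\le (C_1 T^*+C_2\sqrt{T^*})\,\|u-w\|_{\mathbf X}$, so choosing $T^*=T^*(n,M)$ small makes the constant $<1$. A similar (in fact slightly easier) computation, comparing $\mathcal{S}u$ to the constant path $y_{n,0}$ and using that $\theta_M$-truncated nonlinearities are globally bounded plus $y_0\in L^p(\Omega,\widetilde W)\subset L^2(\Omega,W_n)$ and $U\in L^p(\Omega\times(0,T))$, shows $\mathcal{S}u\in\mathbf X$ whenever $u\in\mathbf X$, so $\mathcal{S}$ is a genuine self-map; one should also check $\mathcal{S}u$ has continuous paths, which follows since the drift integral is absolutely continuous in $t$ and the stochastic integral has a continuous modification.

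I expect the main technical obstacle to be the global Lipschitz estimate for the highest-order nonlinearity $\theta_M(u)\,\mathrm{div}(|A(u)|^2 A(u))$, which is cubic in $\nabla u$ and hence, after taking the divergence, involves $\nabla^2 u$ multiplied by $(\nabla u)^2$: one must carefully split $\theta_M(u)N(u)-\theta_M(w)N(w)=\theta_M(u)(N(u)-N(w))+(\theta_M(u)-\theta_M(w))N(w)$, bound the first factor using that on the support of $\theta_M(u)$ we have $\|u\|_{W^{2,4}}\le 2M$ (and likewise telescoping in $w$, being careful when only one of $\|u\|_{W^{2,4}},\|w\|_{W^{2,4}}$ is below $2M$), and bound the second using the Lipschitz constant of $\theta_M$ together with the fact that $N(w)$, although a priori unbounded, is bounded on the set $\{\|w\|_{W^{2,4}}\le 2M\}$ which is where $\theta_M(u)-\theta_M(w)$ can be nonzero once one also restricts to the support of $\theta_M(u)$ or handles the symmetric case. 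On $W_n$ all of this reduces, via equivalence of norms, to elementary polynomial inequalities in $\|u\|_2,\|w\|_2$, so while the bookkeeping is somewhat delicate the estimates themselves are routine. Everything else — measurability of $\mathcal{S}u$ in $\omega$, predictability, the fact that $\mathbf X$ is a Banach space — is standard.
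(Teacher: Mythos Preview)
Your proposal is correct and rests on the same two pillars as the paper: (i) on the finite-dimensional space $W_n$ all norms are equivalent, so the cut-off $\theta_M$ turns each nonlinearity into a globally Lipschitz map with constant $C(n,M)$; (ii) BDG handles the stochastic integral. The one genuine methodological difference is that you bound $\mathcal{S}u-\mathcal{S}w$ \emph{directly} from its integral representation, obtaining a contraction constant of the form $C_1 T^* + C_2\sqrt{T^*}$, whereas the paper applies It\^o's formula to $(\mathcal{S}u_1-\mathcal{S}u_2,e_i)_V^2$, sums over $i$, and then estimates the resulting terms $I_1,\dots,I_7$; after absorbing the martingale term via Young's inequality this yields the cleaner linear bound $K(M,n)\,T^*$. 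Your route is slightly more elementary (no It\^o formula needed), while the paper's gives a tidier constant and interfaces more smoothly with the $V$-inner-product structure used later. A minor slip: your drift contribution to $\mathbb{E}\sup\|\mathcal{S}u-\mathcal{S}w\|^2$ should carry $(T^*)^2$ rather than $T^*$ after Cauchy--Schwarz in time, but this only improves your final estimate and does not affect the argument.
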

\begin{proof}
	Let us recall that $W^{2,q}(D)\hookrightarrow W^{1,\infty}(D)\cap W^{2,4}(D),\; 4 \leq q <6,$  and all norms in $W_n$ are equivalent, which we will use repeatedly in the following. Let $u_1,u_2 \in W_n$, then  we have
		\begin{align}
	(\mathcal{S}u_1-\mathcal{S}u_2,e_i)_V&=\nu\int_0^\cdot (\Delta(u_1-u_2),e_i) dt-\int_0^\cdot\big(\{\theta_M(u_1)(u_1\cdot \nabla)v(u_1)-\theta_M(u_2)(u_2\cdot \nabla)v(u_2)\},e_i\big)dt\nonumber\\
	&-\sum_{j}\int_0^\cdot\big([\theta_M(u_1)v(u_1)^j\nabla u_1^j-\theta_M(u_2)v(u_2)^j\nabla u_2^j],e_i\big)dt\nonumber\\&
	+(\alpha_1+\alpha_2)\int_0^\cdot\big(\theta_M(u_1)\text{div}(A(u_1)^2)-\theta_M(u_2)\big(\text{div}(A(u_2)^2),e_i\big)dt\nonumber\\
	&+\beta \int_0^\cdot\big(\theta_M(u_1)\text{div}(|A(u_1)|^2A(u_1))-\theta_M(u_2)\text{div}(|A(u_2)|^2A(u_2)),e_i\big)dt\nonumber\\
	&+\int_0^\cdot\big(\theta_M(u_1)G(\cdot,u_1)-\theta_M(u_2)G(\cdot,u_2),e_i\big)d\mathcal{W}, \quad i=1,\cdots,n.\nonumber
	\end{align}
It\^o formula ensures that
\begin{align}
(\mathcal{S}u_1-\mathcal{S}u_2,e_i)_V^2&=2\nu\int_0^\cdot (\mathcal{S}u_1-\mathcal{S}u_2,e_i)_V (\Delta(u_1-u_2),e_i) dt\nonumber\\
&-2\int_0^\cdot(\mathcal{S}u_1-\mathcal{S}u_2,e_i)_V\big(\{\theta_M(u_1)(u_1\cdot \nabla)v(u_1)-\theta_M(u_2)(u_2\cdot \nabla)v(u_2)\},e_i\big)dt\nonumber\\
&-2\sum_{j}\int_0^\cdot(\mathcal{S}u_1-\mathcal{S}u_2,e_i)_V\big([\theta_M(u_1)v(u_1)^j\nabla u_1^j-\theta_M(u_2)v(u_2)^j\nabla u_2^j],e_i\big)dt\nonumber\\&
+2(\alpha_1+\alpha_2)\int_0^\cdot(\mathcal{S}u_1-\mathcal{S}u_2,e_i)_V\big(\theta_M(u_1)\text{div}(A(u_1)^2)-\theta_M(u_2)\big(\text{div}(A(u_2)^2),e_i\big)dt\nonumber\\
&+2\beta \int_0^\cdot(\mathcal{S}u_1-\mathcal{S}u_2,e_i)_V\big(\theta_M(u_1)\text{div}(|A(u_1)|^2A(u_1))-\theta_M(u_2)\text{div}(|A(u_2)|^2A(u_2)),e_i\big)dt\nonumber\\
&+2\int_0^\cdot(\mathcal{S}u_1-\mathcal{S}u_2,e_i)_V\big(\theta_M(u_1)G(\cdot,u_1)-\theta_M(u_2)G(\cdot,u_2),e_i\big)d\mathcal{W}\nonumber\\
&+\sum_{\k \ge 1}\int_0^\cdot  (\theta_M(u_1)\sigma_\k(\cdot,u_1)-\theta_M(u_2)\sigma_\k(\cdot,u_2),e_i)^2 dt, \quad i=1,\cdots,n.\nonumber
\end{align}
Summing up from $i=1$ to $n$, we deduce
\begin{align}
\Vert\mathcal{S}u_1-\mathcal{S}u_2\Vert_{W_n}^2&:=\Vert\mathcal{S}u_1-\mathcal{S}u_2\Vert_V^2\nonumber\\&=2\nu\int_0^\cdot  (P_n\Delta(u_1-u_2),\mathcal{S}u_1-\mathcal{S}u_2) dt\nonumber\\
&-2\int_0^\cdot\big(P_n\{\theta_M(u_1)(u_1\cdot \nabla)v(u_1)-\theta_M(u_2)(u_2\cdot \nabla)v(u_2)\},\mathcal{S}u_1-\mathcal{S}u_2\big)dt\nonumber\\
&-2\sum_{j}\int_0^\cdot\big(P_n[\theta_M(u_1)v(u_1)^j\nabla u_1^j-\theta_M(u_2)v(u_2)^j\nabla u_2^j],\mathcal{S}u_1-\mathcal{S}u_2\big)dt\nonumber\\&
+2(\alpha_1+\alpha_2)\int_0^\cdot\big(P_n(\theta_M(u_1)\text{div}(A(u_1)^2)-\theta_M(u_2)\text{div}(A(u_2)^2)),\mathcal{S}u_1-\mathcal{S}u_2\big)dt\nonumber\\
&+2\beta \int_0^\cdot\big(P_n(\theta_M(u_1)\text{div}(|A(u_1)|^2A(u_1))-\theta_M(u_2)\text{div}(|A(u_2)|^2A(u_2))),\mathcal{S}u_1-\mathcal{S}u_2\big)dt\nonumber\\
&+2\int_0^\cdot\big(P_n(\theta_M(u_1)G(\cdot,u_1)-\theta_M(u_2)G(\cdot,u_2)),\mathcal{S}u_1-\mathcal{S}u_2\big)d\mathcal{W}\nonumber\\
&+\sum_{\k \ge 1}\sum_{i=1}^n\int_0^\cdot  (\theta_M(u_1)\sigma_\k(\cdot,u_1)-\theta_M(u_2)\sigma_\k(\cdot,u_2),e_i)^2 dt\nonumber\\
&=I_1+I_2+I_3+I_4+I_5+I_6+I_7. \nonumber
\end{align}
 Let us consider $\delta >0$ and $ T^*>0$ (to be chosen later). We have
\begin{align*}
\E\sup_{[0,T^*]}\vert I_1 \vert&=
2\E\sup_{r\in [0,T^*]}\vert\int_0^r (P_n\Delta(u_1-u_2),\mathcal{S}u_1-\mathcal{S}u_2) ds \vert\\
&\leq 2\E\int_0^{T^*}\Vert \Delta(u_1-u_2)\Vert_{2}\Vert \mathcal{S}u_1-\mathcal{S}u_2\Vert_{2} ds\\&\leq \delta \E\sup_{[0,T^*]}\Vert \mathcal{S}u_1-\mathcal{S}u_2\Vert_{2}^2+C_\delta T^*\E\sup_{[0,T^*]}\Vert u_1-u_2\Vert_{H^2}^2\\
&\leq \delta \E\sup_{[0,T^*]}\Vert \mathcal{S}u_1-\mathcal{S}u_2\Vert_{W_n}^2+C_\delta(n) T^*\E\sup_{[0,T^*]}\Vert u_1-u_2\Vert_{W_n}^2.
\end{align*}
In order to estimate  $I_2$, we notice that 
\begin{align*}
&\big(\{\theta_M(u_1)(u_1\cdot \nabla)v(u_1)-\theta_M(u_2)(u_2\cdot \nabla)v(u_2)\},P_n^*(\mathcal{S}u_1-\mathcal{S}u_2)\big)\\
&=-[\theta_M(u_1)-\theta_M(u_2)]b(u_1,P_n^*(\mathcal{S}u_1-\mathcal{S}u_2),v(u_1))\\
&\qquad-\theta_M(u_2)[b(u_1-u_2,P_n^*(\mathcal{S}u_1-\mathcal{S}u_2),v(u_1))-b(u_2,P_n^*(\mathcal{S}u_1-\mathcal{S}u_2),v(u_1)-v(u_2))]\\
&\leq K(M)\Vert  u_1-u_2\Vert_{W^{2,4}}\Vert u_1\Vert_{4}\Vert \mathcal{S}u_1-\mathcal{S}u_2\Vert_V\Vert u_1\Vert_{W^{2,4}}+\Vert u_1-u_2\Vert_{4}\Vert \mathcal{S}u_1-\mathcal{S}u_2\Vert_V\Vert u_1\Vert_{W^{2,4}}\\
&\qquad+\Vert u_2\Vert_{4}\Vert \mathcal{S}u_1-\mathcal{S}u_2\Vert_V\Vert u_1-u_2\Vert_{W^{2,4}}\\
&\leq K(M,n)\Vert \mathcal{S}u_1-\mathcal{S}u_2\Vert_{W_n}\Vert u_1-u_2\Vert_{W_n}.
\end{align*}
Concerning $I_3$, we write
\begin{align*}
	&\sum_{j}\big([\theta_M(u_1)v(u_1)^j\nabla u_1^j-\theta_M(u_2)v(u_2)^j\nabla u_2^j],P_n^*(\mathcal{S}u_1-\mathcal{S}u_2)\big)\\
	&=[\theta_M(u_1)-\theta_M(u_2)]b(P_n^*(\mathcal{S}u_1-\mathcal{S}u_2),u_1,v(u_1))\\
	&\qquad+\theta_M(u_2)[b(P_n^*(\mathcal{S}u_1-\mathcal{S}u_2),u_1,v(u_1)-v(u_2))+b(P_n^*(\mathcal{S}u_1-\mathcal{S}u_2),u_1-u_2,v(u_2))]\\
	&\leq  K(M,n)\Vert \mathcal{S}u_1-\mathcal{S}u_2\Vert_{W_n}\Vert u_1-u_2\Vert_{W_n}.
\end{align*}
Therefore, we infer that
\begin{align*}
\E\sup_{[0,T^*]}\vert I_2+I_3\vert\leq \delta \E\sup_{[0,T^*]}\Vert \mathcal{S}u_1-\mathcal{S}u_2\Vert_{W_n}^2+C_\delta K^2(M,n)T^*\E\sup_{[0,T^*]} \Vert u_1-u_2\Vert_{W_n}^2.
\end{align*}
For $I_4, I_5$, we have
\begin{align*}
	&\big(\theta_M(u_1)\text{div}(A(u_1)^2)-\theta_M(u_2)\big(\text{div}(A(u_2)^2),P_n^*(\mathcal{S}u_1-\mathcal{S}u_2)\big)\\
	&=\big([\theta_M(u_1)-\theta_M(u_2)]\text{ div}(A(u_1)^2),P_n^*(\mathcal{S}u_1-\mathcal{S}u_2)\big)\\
	&\qquad+\theta_M(u_2)\big( \text{ div}([A(u_1)-A(u_2)] A(u_1))+\text{ div}(A(u_2)[A(u_1)-A(u_2)]),P_n^*(\mathcal{S}u_1-\mathcal{S}u_2))\big)\\
	&\leq \vert \theta_M(u_1)-\theta_M(u_2)\vert \Vert u_1\Vert_{W^{1,\infty}}\Vert u_1\Vert_{H^2}\Vert \mathcal{S}u_1-\mathcal{S}u_2\Vert_{2}\\
	&\qquad+(\Vert u_1\Vert_{W^{1,\infty}}+\Vert u_2\Vert_{W^{1,\infty}})\Vert u_1-u_2\Vert_{H^2}\Vert \mathcal{S}u_1-\mathcal{S}u_2\Vert_{2}\\
	&\qquad+(\Vert u_1\Vert_{H^2}+\Vert u_2\Vert_{H^2})\Vert u_1-u_2\Vert_{W^{1,\infty}}\Vert \mathcal{S}u_1-\mathcal{S}u_2\Vert_{2}\\
	&\leq K(M)\Vert u_1-u_2\Vert_{W^{2,4}} \Vert u_1\Vert_{W^{1,\infty}}\Vert u_1\Vert_{H^2}\Vert \mathcal{S}u_1-\mathcal{S}u_2\Vert_{2}\\
	&\qquad+(\Vert u_1\Vert_{W^{1,\infty}}+\Vert u_2\Vert_{W^{1,\infty}})\Vert u_1-u_2\Vert_{H^2}\Vert \mathcal{S}u_1-\mathcal{S}u_2\Vert_{2}\\
	&\qquad+(\Vert u_1\Vert_{H^2}+\Vert u_2\Vert_{H^2})\Vert u_1-u_2\Vert_{W^{1,\infty}}\Vert \mathcal{S}u_1-\mathcal{S}u_2\Vert_{2}\\
	&\leq K(M,n)\Vert \mathcal{S}u_1-\mathcal{S}u_2\Vert_{W_n}\Vert u_1-u_2\Vert_{W_n}.
	\end{align*}
On the other hand, we notice that
\begin{align*}
&\big(\theta_M(u_1)\text{div}(|A(u_1)|^2A(u_1))-\theta_M(u_2)\text{div}(|A(u_2)|^2A(u_2)),P_n^*(\mathcal{S}u_1-\mathcal{S}u_2)\big)\\
&=(\theta_M(u_1)-\theta_M(u_2))\big(\text{div}(|A(u_1)|^2A(u_1)),P_n^*(\mathcal{S}u_1-\mathcal{S}u_2)\big)\\&\qquad+\theta_M(u_2)\big(\text{div}(|A(u_1)|^2A(u_1-u_2)),P_n^*(\mathcal{S}u_1-\mathcal{S}u_2)\big)\\&\qquad+\theta_M(u_2)\big(\text{div}([A(u_1)\cdot A(u_1-u_2)+A(u_1-u_2)\cdot A(u_2)]A(u_2)),P_n^*(\mathcal{S}u_1-\mathcal{S}u_2)\big)\\
&\leq K(M)\Vert u_1-u_2\Vert_{W^{2,4}}\Vert u_1\Vert_{W^{1,\infty}}^2\Vert u_1\Vert_{H^2}\Vert \mathcal{S}u_1-\mathcal{S}u_2\Vert_{2}\\&\quad+C(\Vert u_2\Vert_{W^{1,\infty}}\Vert u_1\Vert_{H^2}+\Vert u_1\Vert_{W^{1,\infty}}\Vert u_2\Vert_{H^2})\Vert u_1-u_2\Vert_{W^{1,\infty}}\Vert \mathcal{S}u_1-\mathcal{S}u_2\Vert_{2}\\
&\quad+C(\Vert u_1\Vert_{W^{1,\infty}}+\Vert u_2\Vert_{W^{1,\infty}})\Vert u_2\Vert_{W^{1,\infty}}\Vert u_1-u_2\Vert_{H^2}\Vert \mathcal{S}u_1-\mathcal{S}u_2\Vert_{2}\\
&\quad+C\Vert u_1-u_2\Vert_{W^{1,\infty}}\Vert u_2\Vert_{H^2}\Vert u_2\Vert_{W^{1,\infty}}\Vert \mathcal{S}u_1-\mathcal{S}u_2\Vert_{2}
\leq K(M,n)\Vert \mathcal{S}u_1-\mathcal{S}u_2\Vert_{W_n}\Vert u_1-u_2\Vert_{W_n}.
\end{align*}
Therefore
\begin{align*}
\E\sup_{[0,T^*]}\vert I_4+I_5\vert\leq \delta \E\sup_{[0,T^*]}\Vert \mathcal{S}u_1-\mathcal{S}u_2\Vert_{W_n}^2+C_\delta K^2(M,n)T^*\E\sup_{[0,T^*]} \Vert u_1-u_2\Vert_{W_n}^2.
\end{align*}
 Let $\widetilde\sigma_\k$ be the solution of \eqref{Stokes} with	RHS $f_\k=\theta_M(u_1)\sigma_\k(\cdot,u_1)-\theta_M(u_2)\sigma_\k(\cdot,u_2), \;\k \in \mathbb{N}^*$. Then it follows that,	by	using	the	variational	formulation	\eqref{VF-Stokes}	and	that	$(e_i)_i$	is	an	orthonormal basis	for	$V$
\begin{align*}
\sum_{i=1}^n\int_0^\cdot  (\theta_M(u_1)\sigma_\k(\cdot,u_1)-\theta_M(u_2)\sigma_\k(\cdot,u_2),e_i)^2dt&=\sum_{i=1}^n\int_0^\cdot  (\widetilde \sigma_\k,e_i)_V^2 dt = \int_0^\cdot \Vert P_n\widetilde \sigma_\k \Vert_V^2dt\leq	\int_0^\cdot \Vert \widetilde \sigma_\k \Vert_V^2dt\\
&\leq K \int_0^\cdot \Vert \theta_M(u_1)\sigma_\k(\cdot,u_1)-\theta_M(u_2)\sigma_\k(\cdot,u_2) \Vert_2^2dt.\end{align*}
 Taking into account  \eqref{noise1}, we derive
\begin{align}
\E\sup_{[0,T^*]}\vert I_7\vert  &\leq K \E\sum_{\k \ge 1}\int_0^{T^*} \Vert \theta_M(u_1)\sigma_\k(\cdot,u_1)-\theta_M(u_2)\sigma_\k(\cdot,u_2) \Vert_2^2dt\nonumber\\
	&\leq K \E\sum_{\k \ge 1}\int_0^{T^*} \vert \theta_M(u_1)-\theta_M(u_2)\vert^2\Vert \sigma_\k(\cdot,\theta_{2M}(u_1)u_1) \Vert_2^2dt\nonumber\\&\quad\quad+K \E\sum_{\k \ge 1}\int_0^{T^*} |\theta_M(u_2)|^2\Vert \sigma_\k(\cdot,\theta_{2M}(u_1)u_1)-\sigma_\k(\cdot,\theta_{2M}(u_2)u_2) \Vert_2^2dt \label{BDG1}\\
	&\leq K(M) \E\int_0^{T^*}(\Vert u_1-u_2\Vert_{W^{2,4}}^2+\Vert  u_1-u_2\Vert_{2}^2)dt\leq K(M,n)T^*\E\sup_{[0,T^*]}\Vert u_1-u_2\Vert_{W_n}^2, \nonumber
\end{align}
where we used the fact that all the norms are equivalent on $W_n$.\\

Using the Burkholder–Davis–Gundy and  the Young inequalities
and thanks to \eqref{BDG1}, we deduce the following relation,  for any $\delta >0$
\begin{align*}
 \E\sup_{[0,T^*]}\vert I_6\vert=	&2\E\sup_{r\in [0,T^*]}\vert \int_0^r\big(P_n(\theta_M(u_1)G(\cdot,u_1)-\theta_M(u_2)G(\cdot,u_2)),\mathcal{S}u_1-\mathcal{S}u_2\big)d\mathcal{W}\vert\\
	&\leq 2\E\big[\sum_{\k \ge 1}\int_0^{T^*}\Vert\theta_M(u_1)\sigma_\k(\cdot,u_1)-\theta_M(u_2)\sigma_\k(\cdot,u_2)\Vert_{2}^2\Vert\mathcal{S}u_1-\mathcal{S}u_2\Vert_2^2ds\big]^{1/2}\\	&\leq \delta \E\sup_{[0,T^*]}\Vert\mathcal{S}u_1-\mathcal{S}u_2\Vert_2^2+C_\delta\E\sum_{\k \ge 1}\int_0^{T^*} \Vert \theta_M(u_1)\sigma_\k(\cdot,u_1)-\theta_M(u_2)\sigma_\k(\cdot,u_2) \Vert_2^2dt\\
	&\leq \delta \E\sup_{[0,T^*]}\Vert\mathcal{S}u_1-\mathcal{S}u_2\Vert_2^2+K(M,n)T^*\E\sup_{[0,T^*]}\Vert u_1-u_2\Vert_{W_n}^2,\\
	&\leq \delta \E\sup_{[0,T^*]}\Vert\mathcal{S}u_1-\mathcal{S}u_2\Vert_{W_n}^2+K(M,n)T^*\E\sup_{[0,T^*]}\Vert u_1-u_2\Vert_{W_n}^2.
\end{align*}
Gathering the previous estimates and choosing an appropriate value for  $\delta$, we deduce the existence of $K(M,n)>0$ such that
\begin{align}\label{contraction}
&\E\sup_{[0,T^*]}\Vert\mathcal{S}u_1-\mathcal{S}u_2\Vert_{W_n}^2	\leq K(M,n)T^*\E\sup_{[0,T^*]}\Vert u_1-u_2\Vert_{W_n}^2.
\end{align}
The inequality \eqref{contraction} shows that $\mathcal{S}$ is a contraction on $\mathbf{X}$ for some  deterministic time $T^*>0$.  Hence, there exists a unique $\mathcal{F}_t$-adapted function $y_n$ defined on $\Omega$ with values on $ \mathcal{C}([0,T^*],W_n)$. Furthermore, $y_n$ is predictable stochastic process with values in $W_n$.
\end{proof}
Finally, a standard argument using the decomposition of the interval $[0,T]$ into finite number of small subintervals (\textit{e.g.} of length $\frac{T}{2K(M,n)}$) and gluing the corresponding solutions yields the next lemma.
\begin{lemma}
	There exists a unique predictable solution $y_n\in L^2(\Omega;\mathcal{C}([0,T];W_n)) $ for \eqref{approximation1}.
\end{lemma}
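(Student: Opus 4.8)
The plan is to promote the local-in-time solution of Lemma \ref{local-exis-contraction} to a solution on the whole interval $[0,T]$ by a finite iteration, exploiting the fact that the contraction time $T^*$ can be chosen independently of the initial datum and of the starting instant. First I note that the contraction estimate \eqref{contraction} established in the proof of Lemma \ref{local-exis-contraction} bounds the difference $\mathcal{S}u_1-\mathcal{S}u_2$, an expression in which the initial datum cancels; hence for any $t_0\in[0,T)$, any $\mathcal{F}_{t_0}$-measurable $\xi\in L^2(\Omega;W_n)$, and the shifted filtration $(\mathcal{F}_{t_0+s})_{s\ge0}$, the same Banach fixed point argument applied on $[t_0,t_0+T^*]$ to the map $\mathcal{S}_{t_0,\xi}$ --- obtained from $\mathcal{S}$ by replacing $y_0$ with $\xi$ and every $\int_0^\cdot$ by $\int_{t_0}^\cdot$ --- produces a unique fixed point in $L^2(\Omega;\mathcal{C}([t_0,t_0+T^*];W_n))$. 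The self-mapping property is routine, since $\theta_M$ is bounded, the nonlinear terms are evaluated on the finite-dimensional space $W_n$ (where all norms are equivalent), and $\xi\in L^2$; and the admissible $T^*$ --- any value with $K(M,n)T^*<1$ --- depends on neither $t_0$ nor $\xi$.

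Next, fix such a $T^*$ and a partition $0=t_0<t_1<\cdots<t_N=T$ with $t_{k+1}-t_k\le T^*$ and $N=\lceil T/T^*\rceil$. Define $y_n$ inductively: on $[t_0,t_1]$ let $y_n$ be the fixed point of $\mathcal{S}_{t_0,y_{n,0}}$; assuming $y_n$ has been built on $[0,t_k]$ as a continuous, adapted, $W_n$-valued process with $y_n|_{[0,t_k]}\in L^2(\Omega;\mathcal{C})$, the value $y_n(t_k)$ is $\mathcal{F}_{t_k}$-measurable and lies in $L^2(\Omega;W_n)$, so set $y_n$ on $[t_k,t_{k+1}]$ equal to the fixed point of $\mathcal{S}_{t_k,y_n(t_k)}$. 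By additivity of the Lebesgue and It\^o integrals over the subintervals, the glued process is continuous on $[0,T]$, adapted, and solves \eqref{approximation1} on all of $[0,T]$; summing the finitely many bounds $\E\sup_{[t_k,t_{k+1}]}\Vert y_n\Vert_{W_n}^2<\infty$ yields $y_n\in L^2(\Omega;\mathcal{C}([0,T];W_n))$, and since it is adapted with continuous paths it is predictable.

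For uniqueness, let $y_n$ and $\widetilde y_n$ be two solutions of \eqref{approximation1} in $L^2(\Omega;\mathcal{C}([0,T];W_n))$ with the same data. Their restrictions to $[0,t_1]$ are both fixed points of $\mathcal{S}_{t_0,y_{n,0}}$, hence coincide there; in particular $y_n(t_1)=\widetilde y_n(t_1)$ a.s. Repeating on each $[t_k,t_{k+1}]$, where both restrictions are fixed points of $\mathcal{S}_{t_k,y_n(t_k)}$, gives $y_n=\widetilde y_n$ on $[0,T]$ up to a $P$-null set.

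The only genuinely delicate point is the one already flagged: one must be sure that \eqref{contraction} is uniform in the initial datum and the starting time, so that $T^*$ is fixed once and for all and the iteration closes after $N<\infty$ steps. This is immediate from the proof of Lemma \ref{local-exis-contraction}, where the constant depends only on $M$, $n$ and $T^*$. The remaining points --- $\mathcal{F}_{t_k}$-measurability of the restart data, preservation of adaptedness and path continuity under gluing, and the integrability needed at each step --- are routine consequences of the boundedness of $\theta_M$ and the equivalence of norms on $W_n$.
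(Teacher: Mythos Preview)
Your proposal is correct and takes essentially the same approach as the paper: the paper simply states that one decomposes $[0,T]$ into finitely many subintervals (e.g.\ of length $\tfrac{T}{2K(M,n)}$) and glues the local solutions, relying on the fact that the contraction time in \eqref{contraction} is independent of the initial datum and the starting instant. You have supplied the details that the paper leaves implicit.
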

\subsection{A priori estimates.}
For each $N\in\mathbb{N}$, let us  define   the following  sequence  of stopping times
	\begin{align*}
		\tau_N^n:=\inf\{t\geq 0: \Vert y_n(t)\Vert_{V} \geq N \}\wedge T.
	\end{align*}
	Setting
	\begin{align}\label{fn}
	f_n=f(y_n)= \nu \Delta y_n+\{-(y_n\cdot \nabla)v_n-\sum_{j=1}^dv_n^j\nabla y^j_n+(\alpha_1+\alpha_2)\text{div}(A_n^2) +\beta \text{div}(|A_n|^2A_n)\}\theta_M(y_n)+U,
	\end{align}
	By	using \eqref{approximation1},  we infer  for each $i=1,\cdots,n$
	\begin{align}\label{approximation3rd}
		d(y_n,e_i)_V=(f_n,e_i)dt+\theta_M(y_n)(G(\cdot,y_n),e_i)d\mathcal{W}:=(f_n,e_i)dt+\theta_M(y_n)\sum_{\k\ge 1}(\sigma_\k(\cdot,y_n),e_i)d\beta_\k.
	\end{align}
Applying It\^o's formula, we deduce
	\begin{align*}
		d(y_n,e_i)_V^2=2(y_n,e_i)_V(f_n,e_i)dt+2(y_n,e_i)_V\theta_M(y_n)(G(\cdot,y_n),e_i)d\mathcal{W}+\sum_{k\geq 1} (\sigma_k(\cdot,y_n),e_i)^2dt.
	\end{align*}
Summing  over $i=1,\cdots,n$, we obtain
	\begin{align*}
		\Vert y_n(s)\Vert_V^2-\Vert y_{n,0}\Vert_V^2&=2\int_0^s(f_n,y_n)dt+2\int_0^s\theta_M(y_n)(G(\cdot,y_n),y_n)d\mathcal{W}\\&\quad+\int_0^s (\theta_M(y_n))^2\sum_{i=1}^n\sum_{k\geq 1} (\sigma_k(\cdot,y_n),e_i)^2dt=J_1+J_2+J_3, \qquad \forall s\in [0,\tau_N^n].
	\end{align*}
	 By using  integration by parts and the Navier boundary conditions \eqref{I}$_3$, we derive
	\begin{align*}
		J_1&=2\int_0^s(f_n,y_n)dt\\
			&=-4\nu \int_0^s\Vert D y_n\Vert_{2}^2dt+2\int_0^s(U,y_n)dt-2\int_0^s\theta_M(y_n)[b(y_n,v_n,y_n)+b(y_n,y_n,v_n)] dt\\
		&\qquad+2(\alpha_1+\alpha_2)\int_0^s\theta_M(y_n)(\text{div}(A_n^2),y_n)dt+ 2\beta \int_0^s\theta_M(y_n)(\text{div}(|A_n|^2A_n),y_n)dt\\
		&=-4\nu \int_0^s\Vert D y_n\Vert_{2}^2dt+2\int_0^s(U,y_n)dt-2(\alpha_1+\alpha_2)\int_0^s\theta_M(y_n)(A_n^2,\nabla y_n)dt\\
		&\qquad  -\beta \int_0^s\theta_M(y_n)\int_D|A_n|^4dxdt\\
		&\leq		-4\nu \int_0^s\Vert D y_n\Vert_{2}^2dt+\int_0^s\Vert U\Vert_2^2dt+\int_0^s\Vert y_n\Vert_2^2dt-\dfrac{\beta}{2}\int_0^s\theta_M(y_n)\int_D|A_n|^4dxdt\\
		&\qquad+C(\alpha_1,\alpha_2,\beta)\int_0^s\Vert y_n \Vert_{H^1}^2dt.
	\end{align*}
Concerning  $J_3$,  let 
 $\widetilde\sigma_\k^n$ be  the solution of \eqref{Stokes} with	RHS $f=\sigma_\k(\cdot,y_n), \;\k \in \mathbb{N}^*$.	By	using	the	variational	formulation	\eqref{VF-Stokes}	and	Theorem	\ref{Thm-Stokes}, we get 
	\begin{align*}
		\int_0^s (\theta_M(y_n))^2\sum_{i=1}^n\sum_{\k\geq 1} (\sigma_{\k}(\cdot,y_n),e_i)^2dt&=\int_0^s (\theta_M(y_n))^2\sum_{i=1}^n\sum_{\k\geq 1} (\widetilde\sigma_\k^n,e_i)_V^2dt=
		\int_0^s (\theta_M(y_n))^2\sum_{\k\geq 1} \Vert	P_n\widetilde\sigma_k^n\Vert_V^2dt\\
		&\leq	\int_0^s \sum_{\k\geq 1} \Vert	\widetilde\sigma_k^n\Vert_{V}^2dt\leq C
		\int_0^s\sum_{\k\geq 1} \Vert\sigma_{\k}(\cdot,y_n)\Vert_2^2dt  \leq C(L)
		\int_0^s \Vert y_n\Vert_2^2dt.
	\end{align*}
	Let us estimate the stochastic term $J_2$. 
	By using Burkholder–Davis–Gundy  and Young inequalities, 
	for any $\delta>0$ we can write
	\begin{align*}
		\E\sup_{s\in [0,\tau_N^n]}\vert\int_0^s\theta_M(y_n)(G(\cdot,y_n),y_n)d\mathcal{W}\vert &
		\leq C\E\big[\sum_{\k \ge 1}\int_0^{\tau_N^n}\Vert\theta_M(y_n)\sigma_\k(\cdot,y_n)\Vert_{2}^2\Vert y_n\Vert_2^2ds\big]^{1/2}\\
	&\leq C\E\big[\sum_{\k \ge 1}\int_0^{\tau_N^n}\Vert\sigma_\k(\cdot,y_n)\Vert_{2}^2\Vert y_n\Vert_2^2ds\big]^{1/2}\\
	&\leq \delta \E \sup_{s\in [0,\tau_N^n]} \Vert y_n\Vert_V^2+C_\delta L\int_0^{\tau_N^n} \Vert y_n \Vert_2^2dt.
	\end{align*}

	Hence, an appropriate choice of $\delta$ ensures
	\begin{align*}
			 &\E \sup_{s\in [0,\tau_N^n]} \Vert y_n\Vert_V^2+4\nu\E \int_0^{\tau_N^n}\Vert D y_n\Vert_{2}^2dt+\dfrac{\beta}{2}\E\int_0^{\tau_N^n}\theta_M(y_n)\int_D|A_n|^4dxdt \nonumber\\&\leq \E\Vert y_{n,0}\Vert_V^2	+\E\int_0^{T}\Vert U\Vert_2^2dt+C(\alpha_1,\alpha_2,\beta,L)\E\int_0^{\tau_N^n}\Vert y_n \Vert_{H^1}^2dt.
	\end{align*}
	Then, the Gronwall’s inequality gives 
	\begin{align*}
		\E \sup_{s\in [0,\tau_N^n]} \Vert y_n\Vert_V^2 \leq e^{CT} (\E\Vert y_{n,0}\Vert_V^2	+\E\int_0^{T}\Vert U\Vert_2^2dt).
	\end{align*}
	Let us fix $n\in \mathbb{N}$, we notice that 
		\begin{align*}
		\E  \sup_{s\in [0,\tau_N^n]} \Vert y_n\Vert_V^2 \geq \E(\sup_{s\in [0,\tau_N^n]} 1_{\{ \tau_N^n<T\}} \Vert y_n\Vert_V^2) \geq N^2P(\tau_N^n <T),
	\end{align*}
which implies that $\tau_N^n \to T$ in probability, as $N\to \infty$. Then there exists a subsequence, denoted by the same way, such that
	$$ \tau_N^n \to T \quad \text{ a.s. as } N \to \infty. $$
Since the sequence $\{\tau_N^n\}_N$ is monotone, the monotone convergence  theorem allows to pass to the limit, as 
$N\to \infty,$ and deduce that
\begin{align}\label{estimate1}
	\E \sup_{s\in [0,T]} \Vert y_n\Vert_V^2+4\nu\E \int_0^{T}\Vert D y_n\Vert_{2}^2dt+\dfrac{\beta}{2}\E\int_0^{T}\theta_M(y_n)\int_D|A_n|^4dxdt \leq e^{cT} (\E\Vert y_{0}\Vert_V^2	+\E\int_0^{T}\Vert U\Vert_2^2dt).
\end{align}

In order to get $\widetilde{W}$-regularity for the solution of \eqref{approximation1},
		we define  the following  sequence  of stopping times
	\begin{align*}
	\mathbf{t}_N^n=\inf\{t\geq 0: \Vert y_n(t)\Vert_{\widetilde{W}} \geq N \}\wedge T, \quad N\in \mathbb{N}.
	\end{align*}
	Let  $\widetilde{\sigma}_{\k}^n,\;\widetilde{f}_n$ be  the solutions of \eqref{Stokes} with RHS	$f=\sigma_{\k}(\cdot,y_n),\; f=f_n$,  respectively.		Since	$e_i\in	V$, by	using	the	variational	formulation		\eqref{VF-Stokes}	we	write
	\begin{align}\label{regularization-by-Stokes}
		(\widetilde{f}_n,e_i)_V=(f_n,e_i), \quad (\widetilde{\sigma}_{\k}^n,e_i)_V=(\sigma_\k(\cdot,y_n),e_i).
	\end{align} 
	Now, by multiplying \eqref{approximation3rd} by $\lambda_i$ and using \eqref{basis1.2}, we write
\begin{align*}
		d(y_n,e_i)_{\widetilde{W}}&=(\widetilde f_n,e_i)_{\widetilde{W}}dt+\theta_M(y_n)\sum_{\k\ge 1}(\widetilde \sigma_\k^n,e_i)_{\widetilde{W}}d\beta_\k.
\end{align*}
Now, the It\^o's formula ensures that
\begin{align*}
d(y_n,e_i)_{\widetilde{W}}^2=2(y_n,e_i)_{\widetilde{W}}(\widetilde f_n,e_i)_{\widetilde{W}}dt+2(y_n,e_i)_{\widetilde{W}}\theta_M(y_n)\sum_{\k\ge 1}(\widetilde \sigma_\k^n,e_i)_{\widetilde{W}}d\beta_\k+(\theta_M(y_n))^2\sum_{\k\ge 1}(\widetilde \sigma_\k^n,e_i)_{\widetilde{W}}^2dt.
\end{align*}
By multiplying the last equality by $\dfrac{1}{\lambda_i}$ and summing over $i=1,\cdots,n$, we obtain
\begin{align}\label{approx-y-W}
	d(\Vert \text{curl} v(y_n)\Vert_{2}^2+\Vert y_n\Vert_V^2)&
=2 (\text{curl} f_n, \text{curl} v(y_n))dt+2(f_n,y_n)dt+ 2\theta_M(y_n)(\text{curl}G(\cdot,y_n),\text{curl}v(y_n))d\mathcal{W}\nonumber\\&\quad+2\theta_M(y_n)(G(\cdot,y_n),y_n)d\mathcal{W}
+(\theta_M(y_n))^2\sum_{\k\ge 1}\sum_{i=1}^n \dfrac{1}{\lambda_i} (\widetilde \sigma_\k^n,e_i)_{\widetilde{W}}^2dt\nonumber\\
	&=2 (\text{curl} f_n, \text{curl} v(y_n))dt+2(f_n,y_n)dt
	+ 2\theta_M(y_n)(G(\cdot,y_n),y_n)d\mathcal{W}\nonumber\\
	&\quad+ 2\theta_M(y_n)(\text{curl}G(\cdot,y_n),\text{curl}v(y_n))d\mathcal{W}+(\theta_M(y_n))^2\sum_{\k\ge 1} \Vert	P_n\widetilde \sigma_\k^n\Vert_{\widetilde{W}}^2dt\\
	&=A_1+A_2+A_3+A_4+A_5 \nonumber,
\end{align}
where	we	used	the	definition	of	inner	product	in	$\widetilde{W}$	to	obtain	the	last	equalities.\\
Let us estimate the terms $A_i,\; i=1,\cdots,5$.
\begin{align*}
	A_1=&2\theta_M(y_n)\big(-\text{curl}[(y_n\cdot \nabla)v_n]-\sum_{j=1}^d\text{curl}[v_n^j\nabla y^j_n]+(\alpha_1+\alpha_2)\text{curl}[\text{div}(A_n^2)],\text{curl} v(y_n) \big)\\
	&+2\beta\theta_M(y_n)(\text{curl} [\text{div}(|A_n|^2A_n)],\text{curl} v(y_n))+2( \nu \text{curl}\Delta y_n+\text{curl} U,\text{curl} v(y_n))=A_1^1+A_1^2+A_1^3.
\end{align*} 
By using \cite[Section 4]{Bus-Ift-2}, note that 
\begin{align*}
\vert A_1^1\vert &\leq C\theta_M(y_n)\int_D\vert \mathcal{D}(y_n)\vert\vert \mathcal{D}^3(y_n)\vert\vert \mathcal{D}^3(y_n)\vert dx+C\theta_M(y_n)\int_D\vert \mathcal{D}^2(y_n)\vert\vert \mathcal{D}^2(y_n)\vert\vert \mathcal{D}^3(y_n)dx\\
&\leq  C\theta_M(y_n)[ \Vert \mathcal{D}(y_n)\Vert_{L^\infty}\Vert y_n\Vert_{H^3}^2+ \Vert \mathcal{D}^2(y_n)\Vert_{L^4}^2\Vert y_n\Vert_{H^3} ]\\
&\leq K(M)\Vert  y_n\Vert_{H^3}^2,
\\
\vert A_1^2\vert &\leq C\theta_M(y_n)\biggl[\int_D \vert \mathcal{D}(y_n)\vert^2\vert \mathcal{D}^3(y_n)\vert^2dx+\int_D \vert \mathcal{D}(y_n)\vert\vert \mathcal{D}^2(y_n)\vert^2\vert \mathcal{D}^3(y_n)\vert dx\biggr]\\
&\leq C\theta_M(y_n)[ \Vert \mathcal{D}(y_n)\Vert_{L^\infty}^2\Vert y_n\Vert_{H^3}^2+ \Vert \mathcal{D}(y_n)\Vert_{L^\infty}\Vert \mathcal{D}^2(y_n)\Vert_{L^4}^2\Vert y_n\Vert_{H^3} ]\\
&\leq K(M)\Vert  y_n\Vert_{H^3}^2,
\end{align*}
where we used the fact that $\Vert \mathcal{D}(y_n)\Vert_{L^\infty}+\Vert \mathcal{D}^2(y_n)\Vert_{L^4}\leq K(M)$, thanks to the properties cut-off function \eqref{cut-function}.
 On the other hand, we can deduce 
\begin{align*}
	A_1^3 \leq -\dfrac{2\nu}{\alpha_1}\Vert \text{curl}v(y_n)\Vert_{2}^2+C\Vert y_n\Vert_V^2+C\Vert \text{curl}(U)\Vert_{2}^2+\delta \Vert \text{curl}v(y_n)\Vert_{2}^2\quad  \text{for any}\quad  \delta >0.
\end{align*}
Setting $\delta=\dfrac{\nu}{\alpha_1}$, we get
\begin{align*}
A_1^3 \leq -\dfrac{\nu}{\alpha_1}\Vert \text{curl}v(y_n)\Vert_{2}^2+C\Vert y_n\Vert_V^2+C\Vert \text{curl}(U)\Vert_{2}^2.
\end{align*}

Due to the estimate of $J_1$, we have
\begin{align*}
	A_2&\leq		-4\nu \int_0^s\Vert D y_n\Vert_{2}^2dt+\int_0^s\Vert U\Vert_2^2dt+\int_0^s\Vert y_n\Vert_2^2dt-\dfrac{\beta}{2}\int_0^s\theta_M(y_n)\int_D|A_n|^4dxdt\\
	&\qquad +C(\alpha_1,\alpha_2,\beta)\int_0^s\Vert y_n \Vert_{H^1}^2dt.
\end{align*}
The term $A_5$  satisfies
\begin{align*}
A_5\leq \sum_{\k\ge 1} \Vert\widetilde \sigma_k^n\Vert_{\widetilde{W}}^2\leq C\sum_{\k\ge 1}\Vert \sigma_\k(\cdot,y_n)\Vert_{H^1}^2\leq C\Vert y_n\Vert_V^2,
\end{align*}
where we used Theorem	\ref{Thm-Stokes}	with	$m=1$,   \eqref{noise1} and \eqref{noiseV} to deduce the last estimate.\\

Similarly to the estimate of $J_2$, for any $\delta>0$, the stochastic integral $A_3$ verifies
\begin{align*}
\E\sup_{s\in [0,\mathbf{t}_N^n]}\left\vert\int_0^s
\theta_M(y_n)(G(\cdot,y_n),y_n)d\mathcal{W}\right\vert \leq \delta \E \sup_{s\in [0,\mathbf{t}_N^n]} \Vert y_n\Vert_V^2+C_\delta K\int_0^{\mathbf{t}_N^n} \Vert y_n \Vert_2^2dt.
\end{align*}
Now, thanks to Burkholder-Davis-Gundy inequality, for any $\delta>0$, it follows that
\begin{align*}
	2&\E\sup_{s\in [0, \mathbf{t}_N^n]}\biggl\vert \int_0^s 
	\theta_M(y_n)(\text{curl}G(\cdot,y_n), \text{curl}v(y_n))d\mathcal{W}\biggr\vert\\
	&=2\E\sup_{s\in [0, \mathbf{t}_N^n]}\biggl\vert \sum_{\k\ge 1}\int_0^s\theta_M(y_n) (\text{curl}\sigma_\k(\cdot,y_n), \text{curl}v(y_n))d\beta_\k\biggr\vert\\
	&\leq C\E \biggl[\sum_{\k\ge 1}\int_0^{\mathbf{t}_N^n} (\text{curl}\sigma_\k(\cdot,y_n), \text{curl}v(y_n))^2ds\biggr]^{1/2}
	\\ &\leq \delta\E\sup_{s\in [0, \mathbf{t}_N^n]}\Vert \text{curl}v(y_n)\Vert_2^2+C_\delta\E\int_0^{\mathbf{t}_N^n}\Vert y_n\Vert_V^2dr,
\end{align*}
where we used \eqref{noiseV} to deduce the last inequality. \\
Gathering the previous estimates,  and choosing an appropriate $\delta >0$, we deduce
\begin{align*}
	&\E\sup_{s\in [0,\mathbf{t}_N^n]}[\Vert \text{curl}v(y_n)\Vert_2^2+\Vert y_n\Vert_V^2]+C(\nu, \alpha_1)\E\int_0^{\mathbf{t}_N^n}[\Vert D y_n\Vert_{2}^2+\Vert \text{curl}v(y_n)\Vert_{2}^2]dt\\
	&\qquad +C(\beta)\E\int_0^{\mathbf{t}_N^n}\theta_M(y_n)\int_D|A_n|^4dxdt\\&\leq \E\Vert y_0\Vert_{\widetilde W}^2+\E\int_0^{T}\Vert U\Vert_2^2dt+C\E\int_0^T\Vert \text{curl}(U)\Vert_{2}^2dt	+K(L,M,\alpha_1,\alpha_2,\beta)\E\int_0^{\mathbf{t}_N^n}\Vert y_n\Vert_{H^3}^2dt.
\end{align*}
The Gronwall's inequality yields
\begin{align*}
	&\E\sup_{s\in [0,\mathbf{t}_N^n]}[\Vert \text{curl}v(y_n)\Vert_2^2+\Vert y_n\Vert_V^2]\leq K(L,M,\alpha_1,\alpha_2,\beta,T)\big(\E\Vert y_0\Vert_{\widetilde W}^2+\E\int_0^{T}\Vert U\Vert_2^2dt+C\E\int_0^T\Vert \text{curl}U\Vert_{2}^2dt\big).
\end{align*}
	Let us fix $n\in \mathbb{N}.$ Since
\begin{align*}
\E  \sup_{s\in [0,\mathbf{t}_N^n]} \Vert y_n\Vert_{\widetilde{W}}^2 \geq \E\biggl(\sup_{s\in [0,\mathbf{t}_N^n]} 1_{\{ \mathbf{t}_N^n<T\}} \Vert y_n\Vert_{\widetilde{W}}^2\biggr) \geq N^2P(\mathbf{t}_N^n <T),
\end{align*}
we infer that $\mathbf{t}_N^n \to T$ in probability, as $N\to \infty$. Then there exists a subsequence (still denoted by $(\mathbf{t}_N^n)$) such that
$$ \mathbf{t}_N^n \to T \quad \text{ a.s. as } N \to \infty. $$
Since the sequence $\{\mathbf{t}_N^n\}_N$ is monotone, 
 the monotone convergence  theorem can be  applied to  pass to the limit, as $N\to \infty$, in order to obtain 
\begin{align*}
&\E\sup_{s\in [0,T]}[\Vert \text{curl}v(y_n)\Vert_2^2+\Vert y_n\Vert_V^2]\leq K(L,M,\alpha_1,\alpha_2,\beta,T)\big(\E\Vert y_0\Vert_{\widetilde W}^2+\E\int_0^{T}\Vert U\Vert_2^2dt+C\E\int_0^T\Vert \text{curl}U\Vert_{2}^2dt\big).
\end{align*}
Therefore,  we have the following result
\begin{lemma}\label{lemma-estimate} Assume that $\mathcal{H}_0$ holds, then there exists a constant $$K:=K(L,M,\alpha_1,\alpha_2,\beta,T,\Vert y_0\Vert_{L^2(\Omega;\widetilde{W})}, \Vert U\Vert_{L^2(\Omega\times[0,T];H^1(D))})$$ such that 
	\begin{align}
		\E \sup_{s\in [0,T]} \Vert y_n\Vert_V^2+4\nu\E \int_0^{T}\Vert D y_n\Vert_{2}^2dt+\dfrac{\beta}{2}\E\int_0^{T}\theta_M(y_n)\int_D|A_n|^4dxdt &\leq e^{cT} (\E\Vert y_{0}\Vert_V^2	+\E\int_0^{T}\Vert U\Vert_2^2dt),\nonumber\\
	\E\sup_{s\in [0,T]}\Vert y_n\Vert_{\widetilde{W}}^2:=	\E\sup_{s\in [0,T]}[\Vert \text{curl }v(y_n)\Vert_2^2+\Vert y_n\Vert_V^2]&\leq K.\label{Higher-estimate-H3}
	\end{align}
\end{lemma}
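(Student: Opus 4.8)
The plan is to prove both bounds by energy estimates on the Galerkin system \eqref{approximation1}: a first one at the level of the $V$-norm, giving \eqref{estimate1}, and a second, sharper one at the level of the $\widetilde W$-norm, giving \eqref{Higher-estimate-H3}. In each case the scheme is the same: localise by a sequence of stopping times, apply It\^o's formula, absorb the good (dissipative) terms on the left, close by Gronwall's inequality, and then remove the localisation by a monotonicity argument.

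For the $V$-estimate, I would start from \eqref{approximation3rd}, apply It\^o's formula to $(y_n,e_i)_V^2$, sum over $i=1,\dots,n$, and work on $[0,\tau_N^n]$. The deterministic terms are handled by integration by parts together with the Navier boundary conditions in \eqref{I}: the two convective contributions cancel by the antisymmetry of $b$ in its last two arguments; the $(\alpha_1+\alpha_2)$-term is absorbed into the quartic dissipation $-\beta\int_D|A_n|^4\,dx$ by Young's inequality and the compatibility condition \eqref{condition1}; and the viscous term produces $-4\nu\|Dy_n\|_2^2$. The It\^o correction is rewritten, using the variational identity \eqref{VF-Stokes} and the $(\cdot,\cdot)_V$-orthonormality of $\{e_i\}$, as $\sum_k\|P_n\widetilde\sigma_k^n\|_V^2\le\sum_k\|\widetilde\sigma_k^n\|_V^2\le C\sum_k\|\sigma_k(\cdot,y_n)\|_2^2\le CL\|y_n\|_2^2$ by Theorem \ref{Thm-Stokes} and \eqref{noise1}, while the martingale term is controlled by Burkholder--Davis--Gundy and Young. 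After a suitable choice of the small parameters, Gronwall gives a uniform bound on $[0,\tau_N^n]$; since $N^2P(\tau_N^n<T)\le\E\sup_{[0,\tau_N^n]}\|y_n\|_V^2$, one gets $\tau_N^n\uparrow T$ a.s., and monotone convergence then yields \eqref{estimate1}.

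For the $\widetilde W$-estimate, I would multiply \eqref{approximation3rd} by $\lambda_i$ and use \eqref{basis1.2} to pass to the $(\cdot,\cdot)_{\widetilde W}$ inner product, with the right-hand side written through the Stokes lifts $\widetilde f_n$, $\widetilde\sigma_k^n$ of $f_n$, $\sigma_k(\cdot,y_n)$; then It\^o's formula for $(y_n,e_i)_{\widetilde W}^2$, division by $\lambda_i$ and summation produce an identity for $\|y_n\|_{\widetilde W}^2=\|\text{curl}\,v(y_n)\|_2^2+\|y_n\|_V^2$. Apart from the $V$-level terms already treated, its right-hand side reduces to $2(\text{curl}\,f_n,\text{curl}\,v(y_n))$: the linear viscous part yields the new dissipation $-\tfrac{\nu}{\alpha_1}\|\text{curl}\,v(y_n)\|_2^2$, the correction term becomes $\sum_k\|P_n\widetilde\sigma_k^n\|_{\widetilde W}^2\le C\sum_k\|\sigma_k(\cdot,y_n)\|_{H^1}^2\le C\|y_n\|_V^2$ by Theorem \ref{Thm-Stokes} with $m=1$, \eqref{noise1} and \eqref{noiseV}, and the two stochastic integrals are handled by Burkholder--Davis--Gundy. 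Localising with $\mathbf{t}_N^n$, applying Gronwall and letting $N\to\infty$ exactly as before gives \eqref{Higher-estimate-H3}.

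The delicate point is the treatment of $\text{curl}\,f_n$, that is, the curls of the nonlinear terms $(y_n\cdot\nabla)v_n$, $\sum_j v_n^j\nabla y_n^j$, $\text{div}(A_n^2)$ and especially the grade-three term $\text{div}(|A_n|^2A_n)$: after taking the curl these become expressions that are up to cubic in $\mathcal{D}^2(y_n)$ and cannot be closed against the dissipation available (the obstruction being more severe in 3D, where $\text{curl}$ is vector-valued and extra boundary terms appear). This is exactly where the cut-off is used: by the definition of $\theta_M$ and the embedding $H^3\hookrightarrow W^{2,4}$, the factor $\theta_M(y_n)$ forces $\|\mathcal{D}(y_n)\|_{L^\infty}+\|\mathcal{D}^2(y_n)\|_{L^4}\le K(M)$ on its support, so that --- following the structural identities of \cite[Section 4]{Bus-Ift-2} --- all these contributions are bounded by $K(M)\|y_n\|_{H^3}^2$, which is precisely what is needed to apply Gronwall's inequality.
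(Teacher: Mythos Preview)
Your proposal is correct and follows essentially the same approach as the paper's own proof: the two-stage energy estimate (first $V$, then $\widetilde W$), the localisation by $\tau_N^n$ and $\mathbf t_N^n$, the Stokes-lift treatment of the It\^o correction via \eqref{VF-Stokes} and Theorem~\ref{Thm-Stokes}, the BDG--Young handling of the martingale terms, and the crucial use of the cut-off together with \cite[Section~4]{Bus-Ift-2} to bound the curled nonlinearities by $K(M)\|y_n\|_{H^3}^2$ all match the paper exactly. The only cosmetic difference is that the paper displays the intermediate splitting $A_1=A_1^1+A_1^2+A_1^3$ explicitly, whereas you summarise it.
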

Now, let us notice that for any $p\geq 1$, the  Burkholder-Davis-Gundy inequality yields
\begin{align*}
	2\E[\sup_{s\in [0, \mathbf{t}_N^n]}\vert \int_0^s (\text{curl}G(\cdot,y_n), \text{curl}v(y_n))d\mathcal{W}\vert]^p
	&=2\E\sup_{s\in [0, \mathbf{t}_N^n]}\vert \sum_{\k\ge 1}\int_0^s (\text{curl}\sigma_\k(\cdot,y_n), \text{curl}v(y_n))d\beta_\k\vert^p\\
	&\leq C_p\E [\sum_{\k\ge 1}\int_0^{\mathbf{t}_N^n} (\text{curl}\sigma_\k(\cdot,y_n), \text{curl}v(y_n))^2ds]^{p/2}\\
	&\leq C_p(L)\E[\sup_{s\in [0, \mathbf{t}_N^n]}\Vert \text{curl}v(y_n)\Vert_2^2\int_0^{\mathbf{t}_N^n}\Vert y_n\Vert_V^2dr]^{p/2}
	\\ &\leq \delta\E\sup_{s\in [0, \mathbf{t}_N^n]}\Vert \text{curl}v(y_n)\Vert_2^{2p}+C_\delta(L,T)\E\int_0^{\mathbf{t}_N^n}\Vert y_n\Vert_V^{2p}dr,
\end{align*}
and 
\begin{align*}
	\E[\sup_{s\in [0,\mathbf{t}_N^n]}\vert\int_0^s\theta_M(y_n)(G(\cdot,y_n),y_n)d\mathcal{W}\vert]^p \leq \delta \E \sup_{s\in [0,\mathbf{t}_N^n]} \Vert y_n\Vert_V^{2p}+C_\delta(L,T) K\int_0^{\mathbf{t}_N^n} \Vert y_n \Vert_2^{2p}dt.
\end{align*}
From \eqref{approx-y-W}, for any $t\in [0,\mathbf{t}_N^n]$,
the following expression holds
\begin{align*}
&\sup_{s\in [0,\mathbf{t}_N^n]}[\Vert y_n(t)\Vert_{V}^2
	+\Vert \text{curl} v(y_n(t))\Vert_{2}^2] \leq \Vert y_0\Vert_{\widetilde W}^2
	+ K(M)\biggl[\int_0^{\mathbf{t}_N^n}\Vert  y_n\Vert_{H^3}^2 ds+ \int_0^T\Vert U\Vert_2^2ds+ \int_0^T\Vert \text{ curl }U\Vert_2^2 ds\biggr]\\
&\qquad +2\sup_{s\in [0, \mathbf{t}_N^n]}
\biggl\vert \int_0^s (\text{curl}G(\cdot,y_n), \text{curl}v(y_n))d\mathcal{W}\biggr\vert+\sup_{s\in [0,\mathbf{t}_N^n]}\biggl\vert\int_0^s\theta_M(y_n)(G(\cdot,y_n),y_n)d\mathcal{W}\biggr\vert
\end{align*}
Taking the $p^{th}$ power, applying the  expectation,
 choosing $\delta$ small enough and then applying the  Gronwall inequality, we  deduce
\begin{lemma}\label{extra-regularity}
	For any $p\geq 1$, there exists $K(M,T,p)>0$ such that
	\begin{align}\label{p-integrability}
	\E \sup_{ [0,T]} \Vert y_n \Vert_{\widetilde{W}}^{2p} \leq  K(M,T,p)(1+\E\Vert y_0\Vert_{\widetilde W}^{2p}+\E\int_0^T\Vert U\Vert_2^{2p}ds+ \E\int_0^T\Vert \text{ curl }U\Vert_2^{2p} ds).
	\end{align}
\end{lemma}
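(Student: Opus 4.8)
The strategy is to bootstrap the second--moment bound of Lemma \ref{lemma-estimate} into a $2p$--th moment bound by taking the $p$--th power of the pathwise inequality established just above for $\sup_{s\in[0,\mathbf{t}_N^n]}[\Vert y_n(s)\Vert_V^2+\Vert\text{curl}\,v(y_n(s))\Vert_2^2]=\sup_{s\in[0,\mathbf{t}_N^n]}\Vert y_n(s)\Vert_{\widetilde W}^2$, and then absorbing the martingale contributions with the BDG estimates already recorded. Fix $t\in[0,T]$ and run the same computation stopped at $t\wedge\mathbf{t}_N^n$. Applying the elementary inequality $(\sum_{l}a_l)^p\le C_p\sum_l a_l^p$ and Hölder's inequality in time (so that $\big(\int_0^{t\wedge\mathbf{t}_N^n}\Vert y_n\Vert_{H^3}^2\,ds\big)^p\le T^{p-1}\int_0^{t\wedge\mathbf{t}_N^n}\Vert y_n\Vert_{H^3}^{2p}\,ds$, and likewise for the forcing terms), and using the equivalence of $\Vert\cdot\Vert_{\widetilde W}$ and $\Vert\cdot\Vert_{H^3}$ from Section \ref{Sec2}, one obtains after taking expectations
$$\E\sup_{s\in[0,t\wedge\mathbf{t}_N^n]}\Vert y_n\Vert_{\widetilde W}^{2p}\le C\Big(\E\Vert y_0\Vert_{\widetilde W}^{2p}+\E\int_0^T\Vert U\Vert_2^{2p}\,ds+\E\int_0^T\Vert\text{curl}\,U\Vert_2^{2p}\,ds\Big)+C\,\E\!\int_0^{t\wedge\mathbf{t}_N^n}\Vert y_n\Vert_{\widetilde W}^{2p}\,ds+R_N(t),$$
where $R_N(t)$ collects the $p$--th powers of the two stochastic integrals in \eqref{approx-y-W}.

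For $R_N(t)$ I would invoke verbatim the two BDG estimates displayed immediately before the statement: for every $\delta>0$,
$$\E\Big[\sup_{s\in[0,t\wedge\mathbf{t}_N^n]}\Big|\int_0^s(\text{curl}\,G(\cdot,y_n),\text{curl}\,v(y_n))\,d\mathcal{W}\Big|\Big]^p\le\delta\,\E\sup_{s\in[0,t\wedge\mathbf{t}_N^n]}\Vert\text{curl}\,v(y_n)\Vert_2^{2p}+C_\delta(L,T)\,\E\int_0^{t\wedge\mathbf{t}_N^n}\Vert y_n\Vert_V^{2p}\,dr,$$
and the analogous bound for $\int_0^s\theta_M(y_n)(G(\cdot,y_n),y_n)\,d\mathcal{W}$. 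Choosing $\delta$ small enough, the terms $\delta\,\E\sup\Vert\cdot\Vert_{\widetilde W}^{2p}$ can be moved to the left--hand side --- this is legitimate precisely because on $[0,\mathbf{t}_N^n]$ the quantity $\sup_s\Vert y_n(s)\Vert_{\widetilde W}^{2p}\le N^{2p}$ is a priori finite. Using Fubini together with $\Vert y_n(s)\Vert_{\widetilde W}^{2p}\le\sup_{r\le s}\Vert y_n(r)\Vert_{\widetilde W}^{2p}$, the remaining time integral is $\le\int_0^t\E\sup_{r\in[0,s\wedge\mathbf{t}_N^n]}\Vert y_n\Vert_{\widetilde W}^{2p}\,ds$, so Gronwall's inequality yields
$$\E\sup_{s\in[0,\mathbf{t}_N^n]}\Vert y_n\Vert_{\widetilde W}^{2p}\le K(M,T,p)\Big(1+\E\Vert y_0\Vert_{\widetilde W}^{2p}+\E\int_0^T\Vert U\Vert_2^{2p}\,ds+\E\int_0^T\Vert\text{curl}\,U\Vert_2^{2p}\,ds\Big),$$
with $K(M,T,p)$ independent of $N$.

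Finally, since $\mathbf{t}_N^n\uparrow T$ a.s.\ as $N\to\infty$ (shown in the proof of Lemma \ref{lemma-estimate}) and the sequence $\{\mathbf{t}_N^n\}_N$ is monotone, the monotone convergence theorem lets me pass to the limit in $N$ and obtain \eqref{p-integrability}. There is no deep obstacle here: the estimate is a bootstrap of the $L^2$ bound, and the only points demanding care are (i) localizing with $\mathbf{t}_N^n$ before performing the absorption of the $\delta$--terms, so that what is absorbed is finite, and (ii) applying Hölder's inequality in time to the $\int\Vert y_n\Vert_{H^3}^2$ term before exchanging it with the stopping time, after which Gronwall closes the argument uniformly in $N$.
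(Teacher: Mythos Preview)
Your proposal is correct and follows essentially the same approach as the paper: take the $p$--th power of the pathwise inequality derived from \eqref{approx-y-W}, control the stochastic terms with the BDG bounds recorded just above the statement, absorb the $\delta$--terms (legitimate thanks to the localization by $\mathbf{t}_N^n$), apply Gronwall, and let $N\to\infty$ via monotone convergence. You are somewhat more explicit than the paper about the H\"older step in time and the justification of the absorption, but the argument is the same.
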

\begin{remark}\label{Rmq-H3-regularity-2D}
		We wish to draw the reader’s attention to the fact that	the	cut-off function \eqref{cut-function}	plays	a	crucial	role	to	obtain	$H^3$-estimate	in	2D	and	3D	cases,	which	leads	to	bound	depends	on	$M$.	In	the	deterministic	case,	the	authors	in	\cite[Section	5]{Bus-Ift-2}	proved	the		$H^3$-regularity	by	using	some	interpolation	inequalities	(available	only	on	2D)	to	bound	$A_1^1$	and	$A_1^2$	above,	see	\eqref{approx-y-W}.	Then,	solving	a differential inequality.	Unfortunately, it is not clear how to use the same arguments because of the	presence	of	the stochastic integral and the expectation,	we	refer	to	\cite[Section	5]{Bus-Ift-2}	for	 the	interested reader.
		\end{remark}

\subsection{ Compactness}\label{compactness}
We will use Lemma \ref{lemma-estimate} and the regularity of the stochastic integral (Lemma \ref{lemma-Flandoli}) to get compactness argument leading to the existence  of martingale solution (see Definition \ref{solmartingale}) to \eqref{cut-off}. For that, define the following path space
$$ \mathbf{Y}:=\mathcal{C}([0,T], H_0)\times\mathcal{C}([0,T], (W^{2,4}(D))^d)\times L^p(0,T; (H^1(D))^d))\times \widetilde{W} .$$
Denote by $\mu_{y_n}$ the law of $y_n$  on $ \mathcal{C}([0,T], (W^{2,4}(D))^d)$, $\mu_{U_n}$ the law of $P_nU$  on $L^p(0,T; (H^1(D))^d)$, $\mu_{y_0^n}$ the law of $P_ny_0$ on $\widetilde{W}$,  and $\mu_{\mathcal{W}}$ the law of $\mathcal{W}$ on $\mathcal{C}([0,T], H_0)$ and their joint law on $\mathbf{Y}$ by $\mu_n$.
\begin{lemma}\label{tight-force} The sets $\{ \mu_{U_n}; n\in \mathbb{N}\}$ and $\{ \mu_{y_0^n}; n\in \mathbb{N}\}$  are tight on  $L^p(0,T; (H^1(D))^d)$ and $\widetilde{W}$,  respectively. 
\end{lemma}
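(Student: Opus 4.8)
The plan is to deduce both tightness statements from the \emph{strong} convergence of the projected data in the Bochner spaces $L^p(\Omega;\,\cdot\,)$ — which is exactly what assumption $\mathcal{H}_0$, combined with the approximation properties of the projections $P_n$, provides — and then to invoke the classical fact that a weakly convergent sequence of Borel probability measures on a Polish space is tight.

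First I would record the strong convergence of the data. For the initial condition this has already been observed above: by the continuity of $P_n$ on $\widetilde{W}$, the uniform bound $\Vert P_n y_0\Vert_{\widetilde{W}}\le\Vert y_0\Vert_{\widetilde{W}}$, the integrability $y_0\in L^p(\Omega,\widetilde{W})$ from $\mathcal{H}_0$, and Lebesgue's dominated convergence theorem, one has $P_n y_0\to y_0$ in $L^p(\Omega;\widetilde{W})$. The same scheme applies to the forcing: since $(e_i)_{i\in\mathbb{N}}$ is an orthonormal basis of $V$, $P_n u$ converges in $V$ for every $u\in(H^1(D))^d$ with a uniform bound $\Vert P_n u\Vert_V\le C\Vert u\Vert_{H^1}$; using the equivalence of $\Vert\cdot\Vert_V$ and $\Vert\cdot\Vert_{H^1}$, that $\Vert U\Vert_{(H^1(D))^d}\in L^p(\Omega\times(0,T))$ by $\mathcal{H}_0$, and dominated convergence in the variables $(\omega,t)$, one obtains that $\{P_nU\}_n$ converges strongly in $L^p\big(\Omega;L^p(0,T;(H^1(D))^d)\big)$ to some limit $\bar U$.

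Next I would pass from these $L^p(\Omega;\,\cdot\,)$-convergences to tightness. Both target spaces $E_1:=L^p(0,T;(H^1(D))^d)$ and $E_2:=\widetilde{W}$ are separable Banach spaces, hence Polish. Strong convergence in $L^p(\Omega;E_i)$ implies convergence in probability in $E_i$, hence convergence in law; thus $\mu_{U_n}\to\mathcal{L}(\bar U)$ and $\mu_{y_0^n}\to\mathcal{L}(y_0)$ in the topology of weak convergence of probability measures. On a Polish space this topology is metrizable, so a convergent sequence of laws together with its limit is a compact — in particular relatively compact — subset of the space of probability measures on $E_i$; Prokhorov's theorem then yields the tightness of $\{\mu_{U_n};n\in\mathbb{N}\}$ on $E_1$ and of $\{\mu_{y_0^n};n\in\mathbb{N}\}$ on $E_2$.

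I do not expect any genuine obstacle here; the only point worth emphasizing is that tightness is asserted on the \emph{strong} spaces $\widetilde{W}$ and $L^p(0,T;(H^1(D))^d)$ themselves, where bounded sets are not relatively compact, so a uniform moment bound on the data alone would not suffice — one really needs the strong $L^p(\Omega;\,\cdot\,)$-convergence of $P_n y_0$ and $P_n U$, which is precisely why the $p$-integrability of the data from $\mathcal{H}_0$ enters, through dominated convergence.
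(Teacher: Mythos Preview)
Your argument is correct and follows essentially the same route as the paper: establish strong convergence of $P_nU$ and $P_ny_0$ in $L^p(\Omega;\cdot)$ via the properties of $P_n$ and dominated convergence, pass to convergence in law on the Polish spaces $L^p(0,T;(H^1(D))^d)$ and $\widetilde{W}$, and conclude tightness by Prokhorov. Your write-up is in fact more explicit than the paper's, which compresses the passage from $L^p(\Omega;\cdot)$-convergence to tightness into a single sentence.
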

\begin{proof}
	By using  the properties of the projection operator $P_n$,  
	we know that $P_nU$ converges strongly to $U$ in $L^p(\Omega_T; (H^1(D))^d)$. Since $L^p(0,T; (H^1(D))^d)$ is separable Banach space, from Prokhorov theorem,  for any $\epsilon >0$, there exists a compact set $K_\epsilon \subset L^p(0,T; (H^1(D))^d)$ such that $$\mu_U(K_\epsilon)=P(P_nU \in K_\epsilon) \geq 1-\epsilon.$$
	A similar argument yields the tightness of $\{ \mu_{y_0^n}; n\in \mathbb{N}\}$, 
	which conclude the proof.
\end{proof}
\begin{lemma}\label{tight}
	The sets $\{ \mu_{y_n}; n\in \mathbb{N}\}$ and  $\{\mu_{\mathcal{W}}\}$ are, respectively, tight on $\mathcal{C}([0,T], (W^{2,4}(D))^d)$
	and   $\mathcal{C}([0,T], H_0)$.
\end{lemma}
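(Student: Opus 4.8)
The tightness of the singleton $\{\mu_{\mathcal W}\}$ is immediate: $\mathcal C([0,T],H_0)$ is a Polish space and every Borel probability measure on a Polish space is tight. The real content is the tightness of $\{\mu_{y_n}\}$ on $\mathcal C([0,T],(W^{2,4}(D))^d)$, and the plan is to bound $\{y_n\}$ uniformly in $n$ in a space compactly embedded in $\mathcal C([0,T],(W^{2,4}(D))^d)$ and then invoke Chebyshev's inequality. Using the identities \eqref{regularization-by-Stokes}, the Galerkin equation \eqref{approximation3rd} can be rewritten, $P$-a.s., in the mild form
\[ y_n(t)=P_ny_0+\int_0^t P_n\widetilde f_n\,dr+\int_0^t\widetilde G_n\,d\mathcal W, \]
where $\widetilde f_n=(I-\alpha_1\mathbb{P}\Delta)^{-1}f_n$, $\widetilde G_n$ is the operator with $\widetilde G_ne_\k=\theta_M(y_n)\,P_n\widetilde\sigma_\k^n$ and $\widetilde\sigma_\k^n=(I-\alpha_1\mathbb{P}\Delta)^{-1}\sigma_\k(\cdot,y_n)$, and $P_n$ denotes the $(\cdot,\cdot)_V$-orthogonal projection onto $W_n$, which, by \eqref{basis1.2}, is also the $(\cdot,\cdot)_{\widetilde W}$-orthogonal projection and hence a contraction on both $V$ and $\widetilde W$.

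I would first bound the drift. Using $\widetilde W\hookrightarrow H^2$, the Sobolev embedding $W^{2,4}(D)\hookrightarrow W^{1,\infty}(D)$ and the support property of the cut-off \eqref{cut-function} (so that $\|y_n\|_{W^{1,\infty}}\le C(M)$ on $\{\theta_M(y_n)\neq0\}$), the source term \eqref{fn} obeys $\|f_n\|_2\le C(M)\bigl(1+\|y_n\|_{\widetilde W}+\|U\|_2\bigr)$; combined with Lemma \ref{extra-regularity} and $\mathcal H_0$ this yields $\E\int_0^T\|f_n\|_2^{p'}\,dt\le C(M,T,p')$ for every $p'\in[2,p)$. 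Theorem \ref{Thm-Stokes} with $m=0$ then gives $\|P_n\widetilde f_n\|_V\le C\|\widetilde f_n\|_{H^2}\le C\|f_n\|_2$, so $t\mapsto\int_0^t P_n\widetilde f_n\,dr$ is bounded in $L^{p'}(\Omega;W^{1,p'}(0,T;V))$ uniformly in $n$. For the stochastic term, $\widetilde G_n$ is predictable and, by Theorem \ref{Thm-Stokes} ($m=0$) and \eqref{noise1}, $\sum_{\k\ge1}\|\widetilde G_ne_\k\|_{H^1}^2\le C\sum_{\k\ge1}\|\sigma_\k(\cdot,y_n)\|_2^2\le CL\|y_n\|_2^2$, whence $\E\int_0^T\bigl(\sum_{\k\ge1}\|\widetilde G_ne_\k\|_{H^1}^2\bigr)^{p'/2}dt\le C(M,L,T,p')$ by Lemma \ref{extra-regularity}; Lemma \ref{lemma-Flandoli} (with $m=1$ and any $\eta\in[0,\tfrac12)$) then bounds $t\mapsto\int_0^t\widetilde G_n\,d\mathcal W$ in $L^{p'}(\Omega;W^{\eta,p'}(0,T;H^1(D)))$ uniformly in $n$. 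Since $P_ny_0$ is bounded in $L^{p'}(\Omega;\widetilde W)$ (by $\mathcal H_0$ and $\|P_ny_0\|_{\widetilde W}\le\|y_0\|_{\widetilde W}$), fixing $p'\in(2,p)$ and $\eta\in(1/p',\tfrac12)$ — so that $\eta p'>1$ — and using the equivalence of the $V$- and $H^1$-norms, we conclude that $\{y_n\}$ is bounded, uniformly in $n$, both in $L^{p'}\bigl(\Omega;W^{\eta,p'}(0,T;V)\bigr)$ and, by Lemma \ref{extra-regularity}, in $L^{p'}\bigl(\Omega;L^\infty(0,T;\widetilde W)\bigr)$.

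To conclude, recall that $\widetilde W\hookrightarrow\hookrightarrow(W^{2,4}(D))^d\hookrightarrow V$, the compact embedding $H^3(D)\hookrightarrow\hookrightarrow W^{2,4}(D)$ being the one noted after \eqref{cut-function}. By a standard compactness criterion of Simon--Aubin--Lions type (cf. \cite{Flan-Gater} and the references therein), when $\eta p'>1$ the bounded subsets of $L^\infty(0,T;\widetilde W)\cap W^{\eta,p'}(0,T;V)$ are relatively compact in $\mathcal C([0,T],(W^{2,4}(D))^d)$. Hence, given $\epsilon>0$, for $R$ large enough the set $K_R:=\{u:\ \|u\|_{L^\infty(0,T;\widetilde W)}+\|u\|_{W^{\eta,p'}(0,T;V)}\le R\}$ has compact closure in $\mathcal C([0,T],(W^{2,4}(D))^d)$, and Chebyshev's inequality together with the uniform bounds above gives $\mu_{y_n}(K_R)=P(y_n\in K_R)\ge1-\epsilon$ for every $n$, which proves the tightness of $\{\mu_{y_n}\}$.

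The step I expect to be the main obstacle is the uniform-in-$n$ fractional-in-time bound of the second paragraph. It relies on a careful estimate of $f_n$ — in particular of the worst term $\theta_M(y_n)(y_n\cdot\nabla)\Delta y_n$, which is controlled only in $L^2$ and genuinely needs the $H^3$ a priori bound of Lemma \ref{extra-regularity}, not merely the cut-off — and on casting the stochastic integral through the regularizing Stokes operator so that Lemma \ref{lemma-Flandoli} applies with $m=1$; everything else is routine.
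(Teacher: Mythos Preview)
Your argument is correct and follows essentially the same route as the paper: decompose $y_n$ into drift plus stochastic integral via the regularizing Stokes operator, bound $\|f_n\|_2$ pointwise by $C(M)(1+\|y_n\|_{\widetilde W}+\|U\|_2)$ using the cut-off and the embedding $W^{2,4}\hookrightarrow W^{1,\infty}$, apply Lemma~\ref{lemma-Flandoli} to the stochastic part, and conclude via Simon's compactness theorem along the chain $\widetilde W\hookrightarrow\hookrightarrow (W^{2,4}(D))^d\hookrightarrow V$ together with Chebyshev. The only cosmetic difference is that the paper packages the temporal regularity in the H\"older scale $\mathcal C^\eta([0,T],V)$ (converting the fractional Sobolev bound on the stochastic integral into a H\"older bound via $W^{s,p}\hookrightarrow\mathcal C^\eta$ when $sp>1$, and bounding the drift directly in $\mathcal C^{1-1/p}$), whereas you stay in $W^{\eta,p'}(0,T;V)$ throughout; both yield the same compact embedding into $\mathcal C([0,T],(W^{2,4}(D))^d)$.
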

\begin{proof}
	Similarly	to	\eqref{fn},	denote	by	\begin{align*}
		f_n=f(y_n)= \nu \Delta y_n+\{-(y_n\cdot \nabla)v_n-\sum_{j=1}^dv_n^j\nabla y^j_n+(\alpha_1+\alpha_2)\text{div}(A_n^2) +\beta \text{div}(|A_n|^2A_n)\}\theta_M(y_n)+U.
		\end{align*}
		From \eqref{approximation3rd}
			and	by	using	\eqref{regularization-by-Stokes},		we	have
$
	(\widetilde{f}_n,e_i)_V=(f_n,e_i)$	and	$ (\widetilde{\sigma}_{\k}^n,e_i)_V=(\sigma_\k(\cdot,y_n),e_i).
$
	Therefore,
	\begin{align*}
	d\sum_{ i=1}^n(y_n,e_i)_Ve_i=\sum_{ i=1}^n(\widetilde{f}_n,e_i)_Ve_idt+\sum_{ i=1}^n\theta_M(y_n)\sum_{\k\ge 1}(\widetilde{\sigma}_{\k}^n,e_i)_Ve_id\beta_\k.
	\end{align*}
	Thus
		\begin{align*}
	y_n(t)=P_ny_0+\int_0^tP_n\widetilde{f}_nds+\sum_{\k\ge 1}P_n\int_0^t\theta_M(y_n)\widetilde{\sigma}_{\k}^nd\beta_\k:=y_n^{det}(t)+y_n^{sto}(t).
	\end{align*}
	Since	$P_n:V\to	W_n$	is	an	orthogonal	projection	and	$\vert\theta_M\vert\leq	1$,	we	obtain
	\begin{align}\label{V-to-L2-projection}
	&\Vert	y_n^{det}(t)-y_n^{det}(s)	\Vert_V	\leq\int_s^t\Vert	P_n\widetilde{f}_n\Vert_Vdr\leq	\int_s^t\Vert	\widetilde{f}_n\Vert_Vdr	\leq	C\int_s^t\Vert	f_n\Vert_2dr	,\quad	0\leq	s<t\leq	T\\
	&\Vert	y_n^{sto}(t)	\Vert_V	\leq	\Vert	\sum_{\k\ge 1}	P_n\int_0^t\theta_M(y_n)\widetilde{\sigma}_{\k}^nd\beta_\k\Vert_V\leq\Vert	\sum_{\k\ge 1}\int_0^t\widetilde{\sigma}_{\k}^nd\beta_\k\Vert_V\leq	C\Vert	\sum_{\k\ge 1}\int_0^t\sigma_{\k}(\cdot,y_n)d\beta_\k\Vert_2,\notag
	\end{align}
		where	$C>0$,	thanks	to	Theorem	\ref{Thm-Stokes}.
Let	us	prove	the	following  estimate:
\begin{align}\label{Holder-deter-part}
\E \Vert y_n^{det}\Vert_{\mathcal{C}^{\eta}([0,T],V)} \leq K(M), \quad \forall\eta\in ]0, 1-\frac{1}{p}].
\end{align}

First, thanks to 	Lemma \ref{lemma-estimate} and	in	particular	due to $\widetilde{W}$-estimate	for	$(y_n)$,	we know that $y_n^{det}$	is	a predictable	continuous	stochastic process.	Next, by using	the Sobolev embedding $W^{2,4}(D)\hookrightarrow	W^{1,\infty}(D)$,	\eqref{fn}	and	\eqref{V-to-L2-projection},	we	are	able	to	infer 	
	\begin{align}
				\Vert	\Delta y_n	\Vert_2^2	&\leq	\Vert	 y_n	\Vert_W^2,\label{estimate-Holder-first}\\[0.2cm]
		\Vert	\theta_M(y_n)[(y_n\cdot \nabla)v_n]	\Vert_2^2&\leq	\theta_M(y_n)\Vert	y_n\Vert_\infty^2	\Vert	 y_n	\Vert_{\widetilde{W}}^2\leq	\theta_M(y_n)\Vert	y_n\Vert_{W^{2,4}}^2	\Vert	 y_n	\Vert_{\widetilde{W}}^2	\leq	4M^2	\Vert	 y_n	\Vert_{\widetilde{W}}^2,\notag\\[0.2cm]
		\Vert	\sum_{j}\theta_M(y_n)[v_n^j\nabla y^j_n]	\Vert_2^2&\leq	\theta_M(y_n)\Vert	 y_n	\Vert_{W}^2\Vert	y_n\Vert_{W^{1,\infty}}^2	\leq	\theta_M(y_n)\Vert	 y_n	\Vert_{\widetilde{W}}^2\Vert	y_n\Vert_{W^{2,4}}^2		\leq	4M^2	\Vert	 y_n	\Vert_{\widetilde{W}}^2,\notag\\[0.2cm]
		\Vert\theta_M(y_n)[\text{div}
		(A_n^2)]	\Vert_2^2&	\leq\theta_M(y_n)\Vert\text{div}
		(A_n^2)	\Vert_2^2\leq	C\theta_M(y_n)\int_D	\vert	\mathcal{D}(y_n)\vert^2	\vert	\mathcal{D}^2(y_n)\vert^2	dx\notag\\
		&\leq	C\theta_M(y_n)\Vert	y_n\Vert_{W^{1,\infty}}^2\Vert	 y_n	\Vert_{W}^2\leq	C\theta_M(y_n)\Vert	y_n\Vert_{W^{2,4}}^2\Vert	 y_n	\Vert_{\widetilde{W}}^2		\leq	4CM^2	\Vert	 y_n	\Vert_{\widetilde{W}}^2,\notag\\[0.2cm]
				\Vert\theta_M(y_n)[\text{div}(|A_n|^2A_n)]	\Vert_2^2&	\leq\theta_M(y_n)\Vert\text{div}(|A_n|^2A_n)	\Vert_2^2\leq	C	\theta_M(y_n)\int_D	\vert	\mathcal{D}(y_n)\vert^4	\vert	\mathcal{D}^2(y_n)\vert^2	dx\notag\\
				&\leq	C\theta_M(y_n)\Vert	y_n\Vert_{W^{1,\infty}}^4\Vert	 y_n	\Vert_{W}^2\leq	C\theta_M(y_n)\Vert	y_n\Vert_{W^{2,4}}^4\Vert	 y_n	\Vert_{\widetilde{W}}^2		\leq	16CM^4	\Vert	 y_n	\Vert_{\widetilde{W}}^2.\label{estimate-Holder-last}
				\end{align}
				Therefore,	there	exists	$C>0$	independent	of	$n$	such	that
				\begin{align}\label{first-part-Hold-dete}
				\E\sup_{t\in [0,T]}	\Vert	y_n^{\det}(t)\Vert_V	\leq	C+\E	\Vert	y_0	\Vert_W^2+C\E\int_0^T(1+M^4)	\Vert	 y_n(s)	\Vert_{\widetilde{W}}^2ds+\E\int_0^T\Vert	U(s)	\Vert_2^2ds	\leq	K(M),	\end{align}
				thanks	to	\eqref{data-assumptions}	and	\eqref{Higher-estimate-H3}.
	Now, let us show that for  $\eta \in ]0,1-\dfrac{1}{p}],$ we have the following
	\begin{equation*}
	\mathbb{E} \sup_{s,t \in [0,T], s\neq t}\dfrac{\|y_n^{det}(t) -y_n^{det}(s)\|_{V}}{\vert t-s\vert^{\eta}}\leq K(M).
	\end{equation*}
	Indeed,	let	$0<	s<t\leq	T$	we	have	(see	\eqref{V-to-L2-projection})
\begin{align*}
\|y_n^{det}(t) -y_n^{det}(s)\|_{V}&\leq	C\displaystyle\int_s^t\biggl\|\bigg(\nu \Delta y_n-\theta_M(y_n)(y_n\cdot \nabla)v_n-\sum_{j}\theta_M(y_n)v_n^j\nabla y^j_n\\
&+(\alpha_1+\alpha_2)\theta_M(y_n)\text{div}
(A_n^2)
+\beta\theta_M(y_n)\text{div}(|A_n|^2A_n)+U\bigg)\biggr\|_{2} dr.
\end{align*}
We	recall that $p>4$, 	by	using	Holder	inequality	and	\eqref{estimate-Holder-first}-\eqref{estimate-Holder-last},	we	obtain
\begin{align}
	\|y_n^{det}(t) -y_n^{det}(s)\|_{V}&\leq	C	(t-s)^{\frac{p-1}{p}}\bigg(\displaystyle\int_s^t
	\bigg\|\bigg(\nu \Delta y_n-\theta_M(y_n)(y_n\cdot \nabla)v_n-\sum_{j}\theta_M(y_n)v_n^j\nabla y^j_n\notag\\
	&+(\alpha_1+\alpha_2)\theta_M(y_n)\text{div}
	(A_n^2)
	+\beta\theta_M(y_n) \text{div}(|A_n|^2A_n)+U\bigg)\bigg\|_{2}^{p} dr\bigg)^{1/p}\notag\\
&\leq	(t-s)^{\frac{p-1}{p}}\bigg(\displaystyle	C(1+M^4)^{\frac{p}{2}}\int_s^t\Vert	 y_n	\Vert_{\widetilde{W}}^{p}dr+\int_s^t\Vert	U\|_{2}^{p} dr\bigg)^{1/p}\label{estimate-Holder-det}.
\end{align}

Considering  	\eqref{estimate-Holder-det} and applying the  Holder inequality, we deduce
\begin{align}\label{second-part-Holder-deter}
	\E	 \sup_{s,t \in [0,T], s\neq t}\dfrac{\|y_n^{det}(t) -y_n^{det}(s)\|_{V}}{\vert t-s\vert^{1-\frac{1}{p}}}\leq	\bigg(\displaystyle	C(1+M^4)^{\frac{p}{2}}\int_0^T\E\Vert	 y_n	\Vert_{\widetilde{W}}^{p}dr+\int_0^T\E\Vert	U\|_{2}^{p} dr\bigg)^{1/p}\leq	K(M),
\end{align}
where	we	used	\eqref{data-assumptions}	and	\eqref{Higher-estimate-H3}.	Consequently,	the		estimates	\eqref{second-part-Holder-deter}	and	\eqref{first-part-Hold-dete}	yield	\eqref{Holder-deter-part}.\\

We recall that  (see e.g. \cite{Flan-Gater}) $$ W^{s,p}(0,T; L^2(D)) \hookrightarrow \mathcal{C}^{\eta}([0,T],L^2(D))\quad  \text{if} \quad0<\eta < sp-1.$$
Let us take $s \in  \big[0,\dfrac{1}{2}\big[$ and $sp>1$ (recall that $p>4$; see $\mathcal{H}_0$). For $\eta \in \big]0,  sp-1\big[$,  we can use  Lemma \ref{lemma-Flandoli} and  \eqref{p-integrability} to deduce
\begin{align*}
 &\E \Vert y_n^{sto}\Vert_{\mathcal{C}^{\eta}([0,T],V)}^p
 \leq	\E \big\Vert \displaystyle\int_0^\cdot	G(\cdot,y_n)d\mathcal{W}\Vert^p_{\mathcal{C}^{\eta}([0,T],L^2(D))}\\& \leq C\E\Big[\big\Vert\int_0^\cdot G(\cdot,y_n)d\mathcal{W}\Vert_{W^{s,p}\big(0,T; L^{2}(D)\big)}^p\Big]  \leq c(s,p) \E\Big[\int_0^T\big( \sum_{\k\ge 1} \Vert \sigma_\k(\cdot,y_n)\Vert_{2}^2\big)^{p/2} dt\Big]\leq K(M).  
\end{align*}
Hence $(y_n)_n$ is bounded in $L^1(\Omega,\mathcal{C}^{\eta}([0,T],V)).$ Therefore $(y_n)_n$ is bounded in 
$$ L^1(\Omega,\mathcal{C}^{\eta}([0,T],V)\cap L^2(\Omega,L^\infty(0,T; \widetilde{W})),\quad \forall\eta\in \big]0,\min\{sp-1, 1-\frac{1}{p}\}\big[.$$
We recall that the embedding  $\widetilde{W} \hookrightarrow W^{2,q}(D)$ is compact for any $ 1\leq q<6$.  The following compact embedding holds
$$ \mathbf{Z}:=L^\infty(0,T; \widetilde{W})\cap \mathcal{C}^{\eta}([0,T],V) \hookrightarrow \mathcal{C}([0,T], (W^{2,4}(D))^d).$$
Indeed,	we have $\widetilde{W} \underset{compact}{\hookrightarrow} (W^{2,4}(D))^d\hookrightarrow	(H^1(D))^d$,	see	\eqref{Sobolev-embedding}.	Let	$	\mathbf{A}$	be	a	bounded	set	of	$\mathbf{Z}$.	Following	\cite[Thm. 5]{Simon}	(the	case	$p=\infty$),	it is enough 
	to	check	the	following	conditions:
\begin{enumerate}
	\item	$\mathbf{A}$	is	bounded	in	$L^\infty(0,T; \widetilde{W})$.
	\item	Let	$h>0$,	$\Vert	f(\cdot+h)-f(\cdot)\Vert_{L^\infty(0,T-h;V)}	\to	0$	as	$h\to0$	uniformly	for	$f\in	\mathbf{A}$.
\end{enumerate} 
First,	note	that	(1)	is	satisfied	by	assumptions.	Concerning	the	second	condition,	let	$h>0$	and		$f\in	\mathbf{A}$,	by	using	that			$f\in\mathcal{C}^{\eta}([0,T],V)$	we	infer
\begin{align*}
	\Vert	f(\cdot+h)-f(\cdot)\Vert_{L^\infty(0,T-h;V)}=\sup_{r\in [0,T-h]}\Vert	f(r+h)-f(r)\Vert_{V}\leq	Ch^{\eta}\to	0,	\text{	as	}	h\to	0,
\end{align*}
where	$C>0$  is	independent	of	$f$.\\

Let $R>0$ and  set $ B_{\mathbf{Z}}(0,R):=\{ v\in \mathbf{Z} \; \vert\; \Vert v\Vert_{\mathbf{Z}} \leq R\}$. Then $B_{\mathbf{Z}}(0,R)$ is a compact  subset of  $\mathcal{C}([0,T], (W^{2,4}(D))^d).$ 
On the other hand, there exists a constant $C >0$ (related to the boundedness of$\{y_n\}_n$ in $L^1(\Omega,\mathcal{C}([0,T], (W^{2,4}(D))^d))$), which is independent of $R$ , such that the following relation holds
\begin{align*}
	\mu_{y_n}(B_{\mathbf{Z}}(0,R))&=1-\mu_{y_n}(B_{\mathbf{Z}}(0,R)^c)=1-\int_{\{ \omega \in \Omega, \Vert y_n\Vert_{\mathbf{Z}} >R\}}1dP\\
	&\geq 1-\dfrac{1}{R}\int_{\{ \omega \in \Omega, \Vert y_n\Vert_{\mathbf{Z}} >R\}}\Vert y_n\Vert_{\mathbf{Z}}dP\\
	&\geq 1-\dfrac{1}{R}\E\Vert y_n\Vert_{\mathbf{Z}}=1-\dfrac{C}{R},\quad  \text{for any}\;  R>0, \quad \text{and any} 
	\;n\in \mathbb{N}.
\end{align*}
Therefore,  for any $\delta>0$ we can find $R_\delta >0$ such that $$\mu_{y_n}(B_{\mathbf{Z}}(0,R_\delta)) \geq 1-\delta, \text{ for all   } n\in \mathbb{N}. $$
Thus the family of laws $\{ \mu_{y_n}; n\in \mathbb{N}\}$ is tight on $\mathcal{C}([0,T], (W^{2,4}(D))^d).$
\\

  Since the law $\mu_{\mathcal{W}}$  is a Radon measure on  $\mathcal{C}([0,T], H_0)$, the second part of the lemma \ref{tight} follows.
\end{proof}
\begin{remark}\label{Rmq-blow-up-continuity}
		By	using	\eqref{Sobolev-embedding}	and	\cite[Thm.	5]{Simon},	one	can	prove,	similarly	to	the	above	arguments,	that	$\mathbf{Z}$	is	compactly	embedded	in		$\mathcal{C}([0,T], (W^{2,q}(D))^d)$	for	$	q<6$	in	the	3D	case	and	that	$\mathbf{Z}$	is	compactly	embedded	in			$\mathcal{C}([0,T], (W^{2,a}(D))^d)$	for	$a<\infty$,		in	the	2D	case.
\end{remark}
As a conclusion, we have the following corollary: 
\begin{cor}\label{comapct-law}
	The set of joint law $\{\mu_n; n\in \mathbb{N}\}$ is tight on $\mathbf{Y}$.
\end{cor}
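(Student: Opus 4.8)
The plan is to deduce tightness of the joint laws $\{\mu_n; n\in\mathbb{N}\}$ on the product space $\mathbf{Y}$ directly from the tightness of each marginal, using the elementary fact that on a finite product of Polish spaces a family of probability measures is tight precisely when every family of coordinate marginals is tight. First I would record the marginals: by construction $\mu_n$ is the law on $\mathbf{Y}$ of the quadruple $(\mathcal{W}, y_n, P_nU, P_ny_0)$, whose coordinate laws are $\mu_{\mathcal{W}}$ on $\mathcal{C}([0,T],H_0)$, $\mu_{y_n}$ on $\mathcal{C}([0,T],(W^{2,4}(D))^d)$, $\mu_{U_n}$ on $L^p(0,T;(H^1(D))^d)$, and $\mu_{y_0^n}$ on $\widetilde{W}$.

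Next I would invoke the tightness facts already established. Lemma \ref{tight} gives tightness of $\{\mu_{y_n}; n\in\mathbb{N}\}$ on $\mathcal{C}([0,T],(W^{2,4}(D))^d)$ (via the uniform bound of Lemma \ref{lemma-estimate}, the stochastic-integral regularity of Lemma \ref{lemma-Flandoli}, and the compact embedding $\mathbf{Z}\hookrightarrow\mathcal{C}([0,T],(W^{2,4}(D))^d)$). Since $\mathcal{C}([0,T],H_0)$ is Polish and $\mu_{\mathcal{W}}$ is a single Radon probability measure on it, the constant family $\{\mu_{\mathcal{W}}\}$ is trivially tight. Lemma \ref{tight-force} gives tightness of $\{\mu_{U_n}; n\in\mathbb{N}\}$ on $L^p(0,T;(H^1(D))^d)$ and of $\{\mu_{y_0^n}; n\in\mathbb{N}\}$ on $\widetilde{W}$. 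Thus, given $\varepsilon>0$, I can select for each factor a compact set carrying mass at least $1-\varepsilon/4$ uniformly in $n$, and set $K_\varepsilon$ equal to the Cartesian product of these four compact sets, which is compact in $\mathbf{Y}$.

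Finally, bounding the complement by a union bound over the four coordinates yields $\mu_n(\mathbf{Y}\setminus K_\varepsilon)\le \varepsilon$ for every $n$, that is $\mu_n(K_\varepsilon)\ge 1-\varepsilon$ for all $n\in\mathbb{N}$; since $\varepsilon>0$ was arbitrary, this is exactly tightness of $\{\mu_n; n\in\mathbb{N}\}$ on $\mathbf{Y}$. There is no real obstacle: the analytic content lies entirely in Lemmas \ref{tight-force} and \ref{tight}, and the corollary is merely the standard repackaging of marginal tightness into a statement about the joint law on the product path space.
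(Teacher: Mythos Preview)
Your argument is correct and is precisely the standard product-space tightness argument that the paper leaves implicit: the paper simply states the corollary as an immediate conclusion of Lemmas \ref{tight-force} and \ref{tight}, and you have filled in the routine $\varepsilon/4$ product-of-compacts reasoning. There is nothing to add.
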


\subsection{Subsequence extractions}
Using Corollary \ref{comapct-law} and the Prokhorov's theorem, we can extract a (not relabeled) subsequence from  $\mu_n$ which converges in law to some probability measure $\mu$, i.e.
$$ \mu_n:=(\mu_{\mathcal{W}}, \mu_{y_n}, \mu_{U_n}, \mu_{y_0^n} ) \to \mu \text{ on  } \mathbf{Y}.$$

Applying the Skorohod Representation Theorem  \cite[Theorem 1.10.4, and Addendum 1.10.5, p. 59]{Vaart-Wellner}, we obtain the following result:
\begin{lemma}\label{skorohod-cv} There exists a probability space $(\bar \Omega, \bar{\mathcal{F}},\bar P)$, and a family of $\mathbf{Y}$-valued random variables $\{ (\bar{\mathcal{W}}_n, \bar y_n, \bar U_n, \bar y_0^n ) , n \in \mathbb{N}\}$  and $\{( \mathcal{W}_\infty, y_\infty,  \bar U, \bar y_0)\}$  defined  on $(\bar \Omega, \bar{\mathcal{F}},\bar P)$ such that
	\begin{enumerate}
	\item $\mu_n=\mathcal{L}( \bar{\mathcal{W}}_n, \bar y_n, \bar U_n, \bar y_0^n ), \forall n \in \mathbb{N}$;
	\item the law of $( \mathcal{W}_\infty, y_\infty, \bar U, \bar y_0)$ is given by $\mu$;
	\item $( \bar{\mathcal{W}}_n, \bar y_n, \bar U_n, \bar y_0^n )$ converges to $(  \mathcal{W}_\infty, y_\infty, \bar U, \bar y_0)$ $\bar P$-a.s. in $\mathbf{Y}$;
	\end{enumerate}
\end{lemma}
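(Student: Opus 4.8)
The plan is to obtain this lemma as a direct application of the Skorohod representation theorem to the weak convergence $\mu_n\to\mu$ on $\mathbf{Y}$ that has just been extracted (via Prokhorov's theorem together with the tightness of Corollary \ref{comapct-law}). The one preliminary point to check is that the path space
\[
\mathbf{Y}=\mathcal{C}([0,T],H_0)\times\mathcal{C}([0,T],(W^{2,4}(D))^d)\times L^p(0,T;(H^1(D))^d)\times\widetilde{W}
\]
is a Polish space: $H_0$, $W^{2,4}(D)$, $H^1(D)$ and $\widetilde{W}$ are separable Banach (resp. Hilbert) spaces, hence $\mathcal{C}([0,T],X)$ for each such $X$ and the Bochner space $L^p(0,T;(H^1(D))^d)$ with $1\le p<\infty$ are separable and completely metrizable, and a finite product of Polish spaces is again Polish.

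Granting this, since $\mu_n\to\mu$ weakly on the separable metric space $\mathbf{Y}$ and $\mu$ is a Borel probability measure, the Skorohod representation theorem in the form of \cite[Theorem 1.10.4 and Addendum 1.10.5]{Vaart-Wellner} furnishes a probability space $(\bar\Omega,\bar{\mathcal{F}},\bar P)$ together with $\mathbf{Y}$-valued random variables $(\bar{\mathcal{W}}_n,\bar y_n,\bar U_n,\bar y_0^n)$, $n\in\mathbb{N}$, and $(\mathcal{W}_\infty,y_\infty,\bar U,\bar y_0)$ whose laws are $\mu_n$ and $\mu$ respectively, and such that $(\bar{\mathcal{W}}_n,\bar y_n,\bar U_n,\bar y_0^n)\to(\mathcal{W}_\infty,y_\infty,\bar U,\bar y_0)$ $\bar P$-almost surely in the topology of $\mathbf{Y}$. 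This is precisely the content of items (1)--(3).

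I do not expect a genuine obstacle here: the lemma is a bookkeeping step and the only subtlety is the separability and metrizability of $\mathbf{Y}$, which is immediate from the above. Invoking the Vaart--Wellner version rather than the classical Skorohod theorem is merely a matter of convenience and robustness — it applies as soon as the limiting measure is concentrated on a separable set (automatic for a tight law), and it is consistent with the later arguments, where one must handle random variables valued in the non-norm-separable space $L^\infty(0,T;\widetilde{W})$ equipped with its weak-$*$ structure.
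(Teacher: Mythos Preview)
Your proposal is correct and matches the paper's approach exactly: the paper states this lemma as an immediate consequence of the Skorohod Representation Theorem \cite[Theorem 1.10.4 and Addendum 1.10.5]{Vaart-Wellner} applied to the weak convergence $\mu_n\to\mu$ obtained from Prokhorov's theorem and Corollary~\ref{comapct-law}, without further argument. Your added verification that $\mathbf{Y}$ is Polish is a welcome bit of care that the paper leaves implicit.
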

\begin{defi}
	For a filtered probability space $(\Omega,\mathcal{F},(\mathcal{F}_t),P)$, the smallest complete, right-continuous filtration containing $(\mathcal{F}_t)$ is called the augmentation of
	$(\mathcal{F}_t)$.\\
	\end{defi}
 
Let us denote  by  $(\mathcal{F}_t^n)$ the augmentation of the filtration
$$ \sigma( \bar y_n(s),\bar{\mathcal{W}}_n(s), \int_0^s\bar U_n(r)dr)_{0\leq s\leq t}, \quad t\in [0,T],$$
and by $(\mathcal{F}_t^\infty)$ the augmentation of the filtration
\begin{align*}
\sigma(y_\infty(s),\mathcal{W}_\infty(s),\int_0^s\bar U(r)dr)_{0\leq  s\leq t},\quad  t\in [0,T].
\end{align*}
Since	$\mu_n=\mathcal{L}( \bar{\mathcal{W}}_n, \bar y_n, \bar U_n, \bar y_0^n ), \forall n \in \mathbb{N}$,
by	using	the	same	arguments	used	in	\cite[Lemma	14]{Vallet-Zimm1},	we	obtain
\begin{lemma}\label{Lemma-Wiener-k}
	$\bar{\mathcal{W}}_n$	is	$Q$-Wiener process with values in the separable Hilbert space $H_0$	where	$Q=\text{diag}(\dfrac{1}{n^2}),	n\in	\mathbb{N}^*$,	and	$Q^{1/2}(H_0)=\mathbb{H}$	with	respect	to	the		filtration	$\mathcal{F}^{n}_t$.
\end{lemma}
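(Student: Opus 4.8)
The plan is to transfer, via the equality of laws provided by Lemma~\ref{skorohod-cv}, all the defining properties of a $Q$-Wiener process from $\mathcal{W}$ on the original space to $\bar{\mathcal{W}}_n$ on the new one, and then to upgrade the martingale property so that it holds with respect to the specific filtration $(\mathcal{F}_t^n)$ generated jointly by $\bar y_n$, $\bar{\mathcal{W}}_n$ and $\int_0^\cdot \bar U_n$. This is exactly the strategy of \cite[Lemma~14]{Vallet-Zimm1}, to which we refer for the details; here we only outline the steps.

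First I would record that, by Lemma~\ref{skorohod-cv}(1), the joint law of $(\bar{\mathcal{W}}_n,\bar y_n,\bar U_n,\bar y_0^n)$ on $\mathbf{Y}$ is $\mu_n$, i.e.\ the law of $(\mathcal{W},y_n,P_nU,P_ny_0)$ on the original stochastic basis. In particular the marginal law of $\bar{\mathcal{W}}_n$ on $\mathcal{C}([0,T],H_0)$ coincides with that of $\mathcal{W}$. Hence $\bar{\mathcal{W}}_n$ is $\bar P$-a.s.\ continuous, $\bar{\mathcal{W}}_n(0)=0$, and, expanding in the orthonormal system $(e_\k)_{\k\ge1}$ of $\mathbb{H}$, one has $\bar{\mathcal{W}}_n=\sum_{\k\ge1}e_\k\bar\beta_\k^n$ where the real processes $(\bar\beta_\k^n)_\k$ have the same joint law as the independent standard Brownian motions $(\beta_\k)_\k$; thus they are themselves mutually independent standard Brownian motions. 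Equivalently, $\bar{\mathcal{W}}_n$ is an $H_0$-valued Gaussian process with the covariance of a $Q$-Wiener process, $Q=\operatorname{diag}(1/\k^2)$ in the natural orthonormal basis of $H_0$, and $Q^{1/2}(H_0)=\mathbb{H}$ by the very construction of $H_0$ (cf.\ \cite[Chapter~4]{Daprato}).

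It then remains to prove that the increments are independent of the past of $(\mathcal{F}_t^n)$, i.e.\ that $\bar{\mathcal{W}}_n(t)-\bar{\mathcal{W}}_n(s)$ is independent of $\mathcal{F}_s^n$ for $0\le s<t\le T$. On the original space, $y_n$ is $(\mathcal{F}_t)$-adapted, $U$ is predictable (so $\int_0^\cdot P_nU$ is adapted), $y_0$ is $\mathcal{F}_0$-measurable, and $\mathcal{W}$ is a cylindrical Wiener process with respect to $(\mathcal{F}_t)$; hence for any finite set of times $r_1,\dots,r_m\le s$ the vector $\big(\mathcal{W}(r_i),\,y_n(r_i),\,\int_0^{r_i}P_nU,\,P_ny_0\big)_{i=1,\dots,m}$ is independent of $\mathcal{W}(t)-\mathcal{W}(s)$. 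This is a property of the joint law $\mu_n$ alone, so it transfers verbatim to $\big(\bar{\mathcal{W}}_n(r_i),\,\bar y_n(r_i),\,\int_0^{r_i}\bar U_n,\,\bar y_0^n\big)_i$ versus $\bar{\mathcal{W}}_n(t)-\bar{\mathcal{W}}_n(s)$. A monotone class argument (using $\bar P$-a.s.\ continuity of $\bar{\mathcal{W}}_n$, $\bar y_n$ and of $\int_0^\cdot\bar U_n$) then extends the independence to the $\sigma$-algebra generated by all such variables with $r\le s$, and finally to its completion and right-continuous enlargement, i.e.\ to $\mathcal{F}_s^n$. Combining this with the previous paragraph, $\bar{\mathcal{W}}_n$ is $(\mathcal{F}_t^n)$-adapted, a.s.\ continuous, null at $0$, and has increments $\bar{\mathcal{W}}_n(t)-\bar{\mathcal{W}}_n(s)\sim\mathcal{N}(0,(t-s)Q)$ independent of $\mathcal{F}_s^n$, which is precisely the definition of an $(\mathcal{F}_t^n)$-$Q$-Wiener process.

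The main obstacle is the last point: passing the ``increments independent of the past'' structure through the Skorokhod construction and then through the augmentation/right-continuous completion defining $(\mathcal{F}_t^n)$. Everything else — continuity, Gaussianity, the identification of $Q$ and of $Q^{1/2}(H_0)=\mathbb{H}$ — is an immediate consequence of the equality of laws in Lemma~\ref{skorohod-cv}(1).
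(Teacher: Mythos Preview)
Your proposal is correct and follows exactly the approach of the paper: both use the equality of laws from Lemma~\ref{skorohod-cv}(1) to transfer the $Q$-Wiener process structure and then invoke the argument of \cite[Lemma~14]{Vallet-Zimm1} to obtain the martingale/independence property with respect to $(\mathcal{F}_t^n)$. In fact, the paper gives no further details beyond citing that reference, so your outline is more explicit than the original.
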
	
As	a	consequence,	note	that	$\int_0^tG(s,\bar{	y}_n(s))d\bar{\mathcal{W}}_k(s)$	is	well-defined	It\^o	integral.	Now, we want to recover the stochastic integral and our system on the new probability	space.
Thanks	to	the	equality	of	laws,	see	Lemma	\ref{skorohod-cv}$_{(1)}$,		and	by	using	a	similair	arguments	used	in	\cite[Subsection	4.3.4]{Bensoussan95},	we	are	able	to	infer
 that $\bar y_n$ is the	unique solution of \eqref{approximation1} for given $$(\bar \Omega, \bar{\mathcal{F}},(\mathcal{F}_t^n),\bar P, \bar{\mathcal{W}}_n, \bar U_n, \bar y_0^n).$$
In other words, the following equations holds $\bar P$-a.s. in $\bar \Omega$
		\begin{align}\label{approximation-new-Omega}
	\begin{cases}
	d(v(\bar y_n),e_i)=\big(\nu \Delta \bar y_n-\theta_M(\bar y_n)(\bar y_n\cdot \nabla)v(\bar y_n)-\sum_{j}\theta_M(\bar y_n)v(\bar y_n)^j\nabla \bar y^j_n+(\alpha_1+\alpha_2)\theta_M(\bar y_n)\text{div}(A(\bar y_n)^2) &\\[0.15cm]
	\hspace*{2cm}+\beta\theta_M(\bar y_n) \text{div}(|A(\bar y_n)|^2A(\bar y_n))+\bar U_n, e_i\big)dt+ \big(\theta_M(\bar y_n)G(\cdot,\bar y_n),e_i\big)d\bar{\mathcal{W}}_n, \forall i=1,\cdots,n
	,\\[0.15cm]
	\bar y_n(0)=\bar y_{0}^n,	\end{cases}
\end{align}
As a consequence of  Lemma	\ref{lemma-estimate} and Lemma \ref{extra-regularity}, we have the following result
\begin{lemma}\label{lemma-estimate-new} There exists a constant $$K:=K(L,M,\alpha_1,\alpha_2,\beta,T,\Vert \bar y_0\Vert_{L^p(\bar \Omega;\widetilde{W})}, \Vert \bar U\Vert_{L^p(\bar \Omega\times[0,T];(H^1(D))^d)})$$ such that 
	\begin{align}
	&\bar\E \sup_{s\in [0,T]} \Vert \bar y_n\Vert_V^2+4\nu\bar\E \int_0^{T}\Vert D \bar y_n\Vert_{2}^2dt+\dfrac{\beta}{2}\bar\E\int_0^{T}\theta_M(\bar y_n)\int_D|\bar A_n|^4dxdt \leq e^{cT} \bigl(\bar \E\Vert \bar y_{0}\Vert_V^2	+\bar \E\int_0^{T}\Vert \bar U\Vert_2^2dt\bigr),\\
	&\bar \E\sup_{s\in [0,T]}\Vert \bar y_n\Vert_{\widetilde{W}}^2=	\bar \E\sup_{s\in [0,T]}[\Vert {\rm curl }\,v(\bar y_n)\Vert_2^2+\Vert \bar y_n\Vert_V^2]\leq K,\\
	&\bar \E \sup_{ [0,T]} \Vert y_n \Vert_{\widetilde{W}}^{p} \leq  K(M,T,p)\bigl(1+\bar\E\Vert \bar y_0\Vert_{\widetilde W}^{p}+\bar\E\int_0^T\Vert \bar U\Vert_2^{p}ds+ \bar\E\int_0^T\Vert \text{ curl } \bar U\Vert_2^{p} ds\bigr), \quad \forall p >2,
	\end{align}
	where $\bar \E$ means that the expectation is taken on $\bar \Omega$ with respect to the probability measure $\bar P$.
\end{lemma}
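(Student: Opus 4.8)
The plan is to deduce Lemma~\ref{lemma-estimate-new} from the a priori bounds already established on the original probability space, namely Lemma~\ref{lemma-estimate} and Lemma~\ref{extra-regularity}, by exploiting the equality of laws furnished by the Skorohod construction; only the right-hand sides will require a short additional argument. First I would record that, by Lemma~\ref{skorohod-cv}$_{(1)}$, for each fixed $n$ the variable $(\bar{\mathcal{W}}_n,\bar y_n,\bar U_n,\bar y_0^n)$ has the same law on $\mathbf Y$ as $(\mathcal W,y_n,P_nU,P_ny_0)$. Every functional appearing on the left-hand side of the three asserted inequalities — $\sup_{[0,T]}\Vert\cdot\Vert_V^2$, $\int_0^T\Vert D\cdot\Vert_2^2\,dt$, $\int_0^T\theta_M(\cdot)\int_D|A(\cdot)|^4\,dx\,dt$, $\sup_{[0,T]}\Vert\cdot\Vert_{\widetilde W}^2$ and $\sup_{[0,T]}\Vert\cdot\Vert_{\widetilde W}^p$ — is a nonnegative Borel functional of the velocity component of a point of $\mathbf Y$: for the $V$- and $A$-quantities this is clear since they are continuous on $\mathcal C([0,T],(W^{2,4}(D))^d)$, while for the $\widetilde W$-quantities one uses that a $W^{2,4}$-continuous path which is bounded in $\widetilde W$ is automatically weakly-$*$ continuous with values in $\widetilde W$, so $t\mapsto\Vert\cdot(t)\Vert_{\widetilde W}$ is lower semicontinuous and the supremum may be taken over rational times, and then Kuratowski's theorem (as in Remark~\ref{Rmq-measurability}) ensures Borel measurability on $\mathbf Y$. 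Consequently the $\bar\E$-expectation of each such functional of $\bar y_n$ equals the $\E$-expectation of the same functional of $y_n$, so Lemma~\ref{lemma-estimate} and Lemma~\ref{extra-regularity} yield the three inequalities with right-hand sides written in terms of $P_ny_0$ and $P_nU$.

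It then remains to replace $P_ny_0$ and $P_nU$ by $\bar y_0$ and $\bar U$. For this I would invoke two facts already present in the text: the restriction projection is norm-non-increasing on $V$ and on $\widetilde W$, so $\Vert P_ny_0\Vert_V\le\Vert y_0\Vert_V$ and $\Vert P_ny_0\Vert_{\widetilde W}\le\Vert y_0\Vert_{\widetilde W}$ (and it is bounded, uniformly in $n$, in the norms needed for the force); and $P_ny_0\to y_0$ in $L^q(\Omega,\widetilde W)$, $P_nU\to U$ in $L^q(\Omega_T,(H^1(D))^d)$ for every $q\in[1,\infty[$. The norm bounds give $\bar\E\Vert\bar y_0^n\Vert_V^q=\E\Vert P_ny_0\Vert_V^q\le\E\Vert y_0\Vert_V^q$, and likewise for the $\widetilde W$-norm of $\bar y_0^n$ and for the relevant norms of $\bar U_n$; the $L^q$-convergences identify the $\bar y_0$- and $\bar U$-marginals of the limit law $\mu$ with $\mathcal L(y_0)$ and $\mathcal L(U)$, whence, by Lemma~\ref{skorohod-cv}$_{(2)}$, $\bar\E\Vert\bar y_0\Vert_{\widetilde W}^q=\E\Vert y_0\Vert_{\widetilde W}^q$, $\bar\E\int_0^T\Vert\bar U\Vert_2^q\,dt=\E\int_0^T\Vert U\Vert_2^q\,dt$, and similarly with $\text{curl}\,U$. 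Chaining these (in)equalities converts the right-hand sides into the stated expressions in $\bar y_0$ and $\bar U$, the mismatch between the $L^2$- and $H^1$-norms of the force being harmlessly absorbed into the constant $K$, which is thereby seen to depend only on the data listed in the statement.

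The one genuinely delicate point, and the part I would write out carefully, is the measurability issue flagged above: the Polish space $\mathbf Y$ only carries the $\mathcal C([0,T],(W^{2,4}(D))^d)$-topology on the velocity slot, whereas the estimates involve the stronger $\widetilde W$-norm, so one must justify both that the $\widetilde W$-valued functionals are Borel on $\mathbf Y$ and that $\bar y_n$ a.s.\ has paths in $L^\infty(0,T;\widetilde W)$. This is exactly where the compact embedding $\mathbf Z=L^\infty(0,T;\widetilde W)\cap\mathcal C^\eta([0,T],V)\hookrightarrow\mathcal C([0,T],(W^{2,4}(D))^d)$ and Kuratowski's theorem come in; once that is settled, everything else is routine bookkeeping with laws and projections, and no new estimate beyond those of Lemma~\ref{lemma-estimate} and Lemma~\ref{extra-regularity} is needed.
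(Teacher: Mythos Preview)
Your argument is correct, but it takes a genuinely different route from the paper. The paper does \emph{not} transfer the inequalities by equality of laws; instead, immediately before stating the lemma it uses the equality-of-laws argument (together with Lemma~\ref{Lemma-Wiener-k} and the Bensoussan-type identification) to show that $\bar y_n$ actually solves the Galerkin system \eqref{approximation-new-Omega} on the new stochastic basis $(\bar\Omega,\bar{\mathcal F},(\mathcal F_t^n),\bar P,\bar{\mathcal W}_n,\bar U_n,\bar y_0^n)$. Once this is in hand, the three estimates follow by rerunning verbatim the It\^o-formula computations of Lemmas~\ref{lemma-estimate} and~\ref{extra-regularity} on the new space; no measurability discussion is needed. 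Your approach bypasses re-deriving the energy estimates at the cost of the Borel-measurability check, while the paper's approach pays the price of identifying the equation on the new space---but that identification is needed anyway for the limit passage in \eqref{pass-limite-mart}, so for the paper it is not extra work.

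One remark that would streamline your version considerably: the measurability issue you flag as ``genuinely delicate'' is in fact harmless here, because $y_n$ takes values in the \emph{finite-dimensional} space $W_n$, and $\mathcal C([0,T],W_n)$ is a closed (hence Borel) subset of $\mathcal C([0,T],(W^{2,4}(D))^d)$. By equality of laws, $\bar y_n\in\mathcal C([0,T],W_n)$ $\bar P$-a.s.\ as well. On $W_n$ all norms are equivalent, so $\sup_{[0,T]}\Vert\cdot\Vert_{\widetilde W}^q$ is simply a continuous functional on $\mathcal C([0,T],W_n)$; there is no need to invoke weak-$*$ continuity, lower semicontinuity, or Kuratowski's theorem. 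With this observation your law-transfer argument becomes a two-line proof.
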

\begin{lemma}\label{Wienr-limit}
	$\bar{\mathcal{W}}_n$	converges	to	$\mathcal{W}_\infty$	in	$L^2(\bar{\Omega},C([0,T]; H_0))$	and
	$\mathcal{W}_\infty=(\mathcal{W}_\infty(t))_{t\in [0,T]}$ is a $H_0$-valued, square integrable $(\mathcal{F}^{\infty}_t)_{t\in [0,T]}$-martingale with quadratic variation process $tQ$ for any $t\in [0,T]$. 
\end{lemma}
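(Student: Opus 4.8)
The plan is to upgrade the $\bar{P}$-a.s.\ convergence of Lemma~\ref{skorohod-cv}$_{(3)}$ to $L^2$-convergence by a uniform integrability argument, and then to identify $\mathcal{W}_\infty$ by passing to the limit in the $Q$-Wiener characterisation of the $\bar{\mathcal{W}}_n$ supplied by Lemma~\ref{Lemma-Wiener-k}. \textbf{Step 1 ($L^2$-convergence).} Since the first marginal of $\mu_n$ is $\mu_{\mathcal{W}}$ for every $n$, each $\bar{\mathcal{W}}_n$ has the law of the original cylindrical Wiener process $\mathcal{W}$ on $\mathcal{C}([0,T],H_0)$; because $\mathbb{H}\hookrightarrow H_0$ is Hilbert--Schmidt, the Burkholder--Davis--Gundy inequality gives $\E\,\Vert\mathcal{W}\Vert_{\mathcal{C}([0,T],H_0)}^{q}<\infty$ for every $q\ge1$, whence
$$\sup_{n\in\mathbb{N}}\bar{\E}\,\bigl\Vert\bar{\mathcal{W}}_n\bigr\Vert_{\mathcal{C}([0,T],H_0)}^{q}<\infty,\qquad\forall q\ge1 .$$
Thus $\{\Vert\bar{\mathcal{W}}_n\Vert_{\mathcal{C}([0,T],H_0)}^{2}\}_n$ is uniformly integrable on $\bar{\Omega}$, and combining this with the $\bar{P}$-a.s.\ convergence $\bar{\mathcal{W}}_n\to\mathcal{W}_\infty$ in $\mathcal{C}([0,T],H_0)$ from Lemma~\ref{skorohod-cv}$_{(3)}$, Vitali's convergence theorem yields $\bar{\mathcal{W}}_n\to\mathcal{W}_\infty$ in $L^2(\bar{\Omega};\mathcal{C}([0,T],H_0))$. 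Being a $\bar{P}$-a.s.\ limit of processes of law $\mu_{\mathcal{W}}$, the process $\mathcal{W}_\infty$ also has law $\mu_{\mathcal{W}}$, hence continuous $H_0$-valued paths with finite moments of every order.

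\textbf{Step 2 (martingale property and quadratic variation).} By Lemma~\ref{Lemma-Wiener-k}, for each $n$ the process $\bar{\mathcal{W}}_n$ is a $Q$-Wiener process with respect to $(\mathcal{F}_t^n)$; equivalently, for all $h,g\in H_0$, all $0\le s\le t\le T$, and every bounded continuous functional $\varphi$ of the restriction to $[0,s]$ of $(\bar y_n,\bar{\mathcal{W}}_n,\int_0^{\cdot}\bar U_n\,dr)$,
\begin{align*}
&\bar{\E}\Bigl[\langle\bar{\mathcal{W}}_n(t)-\bar{\mathcal{W}}_n(s),h\rangle_{H_0}\,\varphi\Bigr]=0,\\
&\bar{\E}\Bigl[\bigl(\langle\bar{\mathcal{W}}_n(t),h\rangle_{H_0}\langle\bar{\mathcal{W}}_n(t),g\rangle_{H_0}-\langle\bar{\mathcal{W}}_n(s),h\rangle_{H_0}\langle\bar{\mathcal{W}}_n(s),g\rangle_{H_0}-(t-s)\langle Qh,g\rangle_{H_0}\bigr)\varphi\Bigr]=0 .
\end{align*}
By Lemma~\ref{skorohod-cv}$_{(3)}$, $(\bar y_n,\bar{\mathcal{W}}_n,\bar U_n)\to(y_\infty,\mathcal{W}_\infty,\bar U)$ $\bar{P}$-a.s.\ in $\mathbf{Y}$; since $\bar U_n\to\bar U$ in $L^p(0,T;(H^1(D))^d)$ implies $\int_0^s\bar U_n\,dr\to\int_0^s\bar U\,dr$ in $(H^1(D))^d$, the arguments of $\varphi$ converge $\bar{P}$-a.s., so $\varphi\to\varphi$ $\bar{P}$-a.s.\ and boundedly. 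The linear and quadratic increment terms are dominated by $\Vert h\Vert_{H_0}\Vert g\Vert_{H_0}\,\Vert\bar{\mathcal{W}}_n\Vert_{\mathcal{C}([0,T],H_0)}^{2}$, which is bounded in $L^{q}(\bar{\Omega})$ for all $q$ by Step~1, hence uniformly integrable; Vitali's theorem then lets me pass to the limit $n\to\infty$ in both identities, obtaining the same relations for $\mathcal{W}_\infty$ and for test functionals of $(y_\infty,\mathcal{W}_\infty,\int_0^{\cdot}\bar U\,dr)$. As these functionals generate the filtration whose augmentation is $(\mathcal{F}_t^\infty)$, and enlarging by $\bar{P}$-null sets preserves the martingale property, for every $h,g\in H_0$ the processes $\langle\mathcal{W}_\infty(\cdot),h\rangle_{H_0}$ and $\langle\mathcal{W}_\infty(\cdot),h\rangle_{H_0}\langle\mathcal{W}_\infty(\cdot),g\rangle_{H_0}-(\cdot)\langle Qh,g\rangle_{H_0}$ are $(\mathcal{F}_t^\infty)$-martingales; that is, $\mathcal{W}_\infty$ is an $H_0$-valued square-integrable $(\mathcal{F}_t^\infty)$-martingale with quadratic variation process $tQ$. (Passing to the limit in $\bar{\E}[\exp(i\langle h,\mathcal{W}_\infty(t)-\mathcal{W}_\infty(s)\rangle_{H_0})\,\varphi]$ by the same argument shows moreover that $\mathcal{W}_\infty$ is a genuine $Q$-Wiener process relative to $(\mathcal{F}_t^\infty)$, as used in the sequel.)

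\textbf{Main difficulty.} The moment bounds are immediate from the equality of laws; the delicate point is the limit passage of Step~2 --- one must take the test functionals $\varphi$ continuous on the \emph{joint} trajectory space $\mathbf{Y}$ (so that the $\bar{P}$-a.s.\ convergence there applies), check that the $\sigma$-algebras they generate are precisely those underlying $(\mathcal{F}_t^\infty)$, and secure enough uniform integrability, here provided by Step~1, to interchange limit and expectation in both the first- and second-moment relations.
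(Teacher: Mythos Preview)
Your proof is correct and follows essentially the same approach as the paper: the paper likewise uses the equality of laws $\mathcal{L}(\bar{\mathcal{W}}_n)=\mathcal{L}(\mathcal{W})$ to obtain uniform $L^p$-bounds on $\sup_{[0,T]}\Vert\bar{\mathcal{W}}_n\Vert_{H_0}$ via BDG, then invokes Vitali together with the $\bar P$-a.s.\ convergence from Lemma~\ref{skorohod-cv} to get $L^2$-convergence. For the martingale property and quadratic variation the paper simply cites \cite[Lemmas~22~\&~23]{Vallet-Zimm1}, which carry out exactly the test-functional/uniform-integrability passage you spell out in Step~2, so your argument is in fact a detailed version of what the paper leaves to that reference.
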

\begin{proof}
	Let	$p>2$,	note	that	$$\displaystyle\bar{\E}\sup_{s\in [0,T]}\Vert	\bar{\mathcal{W}}_n(s)\Vert_{H_0}^p=\displaystyle	\E\sup_{s\in [0,T]}\Vert	\mathcal{W}(s)\Vert_{H_0}^p\leq	C(T\sum_{ n= 1}^\infty\dfrac{1}{n^2})^{p/2},$$
	where	$C>0$	 is independent	of	$k$	from	BDG	inequality.	Thus,	Vitali's theorem	and	Lemma	\ref{skorohod-cv}$_{(2)}$	ensures	the	convergence	in	$L^2(\bar{\Omega},C([0,T]; H_0))$.	The	rest	of	the	lemma	is	a	consequence	of	Lemma	\ref{skorohod-cv},	we	refer	\textit{e.g.}	to	\cite[Lemmas	22	$\&$	23]{Vallet-Zimm1}	for	detailed	and	similair	arguments.
\end{proof}

\subsection{Identification of the limit $\&$ Martingale solutions}\label{limits-identification-section}
Thanks to Lemma \ref{lemma-estimate-new}, we have:
\begin{lemma}\label{lem-cv1} There  exist  $ \mathcal{F}_t^\infty$-predictable   processes $y_\infty, \bar U$ such that the following convergences hold (up to subsequence), as $n \to \infty$:
	\begin{align}
\hspace*{-4cm}	&\bar y_n  \text{ converges strongly to  } y_\infty \text{ in } L^4(\bar\Omega;\mathcal{C}([0,T],(W^{2,4}(D))^d)) \text{ and a.e. in  } Q\times \bar\Omega;
\label{cv2}\\ 
	&\bar y_n  \text{ converges weakly to  } y_\infty \text{ in } L^4(\bar\Omega;L^2(0,T;\widetilde{W}));\label{cv1}\\
	&\bar y_n  \text{ converges weakly-* to  } y_\infty \text{ in } L^4_{w-*}(\bar\Omega;L^\infty(0,T;\widetilde{W})) \label{cv-*};\\
		&\theta_M(\bar y_n) \text{ converges  to  } \theta_M(\bar y_\infty) \text{ in }     L^p(\bar\Omega\times [0,T]) \quad \forall  p \in [1, \infty[;\label{cv4}\\
		&\bar U_n  \text{ converges  to  } \bar U \text{ in }  L^4(\bar\Omega;L^4(0,T;(H^1(D))^d)) ;\label{force-cv} \\
		&\bar y_0^n \text{ converges  to  } \bar y_0= y_\infty(0) \text{ in }  L^4(\bar\Omega;(W^{2,4}(D))^d)  .\label{initial-cv} 
	\end{align}
\end{lemma}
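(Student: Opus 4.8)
The plan is to obtain all six convergences by promoting the $\bar P$-almost sure convergence in $\mathbf{Y}$ furnished by Lemma~\ref{skorohod-cv} to the stronger topologies claimed, exploiting the uniform moment bounds of Lemma~\ref{lemma-estimate-new} together with the law identities $\mathcal{L}(\bar U_n)=\mathcal{L}(P_nU)$, $\mathcal{L}(\bar y_0^n)=\mathcal{L}(P_ny_0)$ and the fact that $P_n$ does not increase $V$-, $\widetilde W$- or $L^p$-norms. A recurring point is that $\mathcal{H}_0$ fixes an exponent $p>4$, strictly larger than the target exponent $4$, which is precisely what makes every uniform-integrability argument below work.

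First I would treat the \emph{strong} convergences \eqref{cv2}, \eqref{force-cv}, \eqref{initial-cv}. Lemma~\ref{skorohod-cv}$_{(3)}$ already gives $\bar y_n\to y_\infty$ a.s.\ in $\mathcal{C}([0,T],(W^{2,4}(D))^d)$, $\bar U_n\to\bar U$ a.s.\ in $L^p(0,T;(H^1(D))^d)$ and $\bar y_0^n\to\bar y_0$ a.s.\ in $\widetilde W$; since $p>4$ and $\widetilde W\hookrightarrow (W^{2,4}(D))^d$, the last two also hold a.s.\ in $L^4(0,T;(H^1(D))^d)$ and in $(W^{2,4}(D))^d$ respectively. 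Lemma~\ref{lemma-estimate-new} then bounds $\bar\E\Vert\bar y_n\Vert_{\mathcal{C}([0,T],W^{2,4})}^{p}$, $\bar\E\Vert\bar U_n\Vert_{L^p(0,T;H^1)}^{p}$ and $\bar\E\Vert\bar y_0^n\Vert_{\widetilde W}^{p}$ uniformly in $n$; as $p>4$, the fourth powers of the corresponding norms are uniformly integrable on $\bar\Omega$, so the Vitali convergence theorem upgrades each a.s.\ limit to the claimed $L^4(\bar\Omega;\cdot)$ convergence. Evaluating \eqref{cv2} at $t=0$ and using $\bar y_n(0)=\bar y_0^n$ identifies $\bar y_0=y_\infty(0)$, and since $W^{2,4}(D)\hookrightarrow C(\overline D)$ for $d\le 3$, the a.s.\ uniform-in-time convergence in $W^{2,4}$ also yields convergence $\bar P\otimes dt\otimes dx$-a.e., i.e.\ a.e.\ on $Q\times\bar\Omega$.

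Then \eqref{cv4} is immediate: $\theta_M$ is a smooth cut-off and $u\mapsto\Vert u\Vert_{W^{2,4}}$ is continuous, so the a.s.\ convergence in $\mathcal{C}([0,T],(W^{2,4}(D))^d)$ gives $\sup_{[0,T]}\vert\theta_M(\bar y_n)-\theta_M(y_\infty)\vert\to 0$ a.s.; as $0\le\theta_M\le 1$, dominated convergence delivers convergence in every $L^p(\bar\Omega\times[0,T])$. For the \emph{weak} statements \eqref{cv1}, \eqref{cv-*} I would invoke compactness: $L^4(\bar\Omega;L^2(0,T;\widetilde W))$ is reflexive and $L^4_{w-*}(\bar\Omega;L^\infty(0,T;\widetilde W))$ is (isometric to) a dual space with separable predual $L^{4/3}(\bar\Omega;L^1(0,T;\widetilde W))$, so the bounds $\sup_n\bar\E\Vert\bar y_n\Vert_{L^2(0,T;\widetilde W)}^4<\infty$ and $\sup_n\bar\E\Vert\bar y_n\Vert_{L^\infty(0,T;\widetilde W)}^4<\infty$ coming from Lemma~\ref{lemma-estimate-new} produce weak, resp.\ weak-$*$, limit points along subsequences; both coincide with $y_\infty$ because testing against $L^{4/3}(\bar\Omega;L^1(0,T;(L^2(D))^d))$ and using the strong convergence \eqref{cv2} pins the limit down. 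Finally, predictability of $y_\infty$ and $\bar U$ follows because each $\bar y_n$, $\bar U_n$ is $(\mathcal{F}^n_t)$-adapted (indeed predictable) and the a.s.\ limit of adapted processes which are continuous (resp.\ lie in $L^p(0,T;(H^1(D))^d)$) is $(\mathcal{F}^\infty_t)$-adapted, hence predictable.

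I expect the delicate step to be the weak-$*$ compactness in the non-separable space $L^\infty(0,T;\widetilde W)$ and the measurability of the resulting limit: one must work inside the weak-$*$ measurable framework set up for $L^p_{w-*}(\Omega;L^\infty(0,T;\widetilde W))$ and use the duality with a separable predual before applying Banach--Alaoglu, as in \cite{Vallet-Zimmermann}. Everything else is the routine combination of Skorohod a.s.\ convergence, the moment bounds of Lemma~\ref{lemma-estimate-new}, and Vitali's theorem, whose applicability rests squarely on the strict inequality $p>4$.
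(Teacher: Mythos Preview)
Your proposal is correct and follows essentially the same route as the paper: Skorohod almost-sure convergence plus Vitali (using $p>4$) for the strong limits \eqref{cv2}, \eqref{force-cv}, \eqref{initial-cv}; reflexivity/Banach--Alaoglu with the duality $L^4_{w-*}(\bar\Omega;L^\infty(0,T;\widetilde W))\simeq\bigl(L^{4/3}(\bar\Omega;L^1(0,T;\widetilde W'))\bigr)'$ for \eqref{cv1}, \eqref{cv-*}; and dominated convergence for \eqref{cv4}. The only minor slip is that the uniform $L^p$ bounds on $\bar U_n$ and $\bar y_0^n$ do not come from Lemma~\ref{lemma-estimate-new} but from the law identities $\mathcal{L}(\bar U_n)=\mathcal{L}(P_nU)$, $\mathcal{L}(\bar y_0^n)=\mathcal{L}(P_ny_0)$ together with $\mathcal{H}_0$ and the contractivity of $P_n$---which you in fact already invoked in your preamble.
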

\begin{proof}
 From Lemma \ref{skorohod-cv}, we know that
		$$ \bar y_n  \text{ converges strongly to  } y_\infty \text{ in } \mathcal{C}([0,T],(W^{2,4}(D))^d)\quad  \bar{P}\text{-a.s. in  } \bar \Omega. $$
		Then the	Vitali’s theorem yields  the first part of	\eqref{cv2}, since $p>4$. The second part is  a consequence of the convergence in $\mathcal{C}([0,T],(W^{2,4}(D))^d)$  $\bar P$-a.s. in  $ \bar \Omega$ .
		\\
		
By the compactness of the closed balls  
 in the space $L^4(\bar \Omega;L^2(0,T;\widetilde{W}))$
 with respect to the weak topology, there exists $\Xi \in L^4(\bar\Omega;L^2(0,T;\widetilde{W}))$ such that $\bar y_n \rightharpoonup \Xi$, and  the uniqueness of the limit gives $\Xi=y_\infty$.\\
 
  Concerning \eqref{cv-*},	the sequence  $(\bar	y_n)$ is bounded in $L^4(\bar\Omega, L^\infty(0,T;\widetilde{W}))$, thus in $$ L_{w-*}^4(\bar\Omega, L^\infty(0,T;\widetilde{W}))\simeq( L^{4/3}(\bar\Omega, L^1(0,T;\widetilde{W}^\prime)))^\prime,$$ where $w-*$ stands for the weak-* measurability and	$L^{4}_{w-*}(\Omega,L^{\infty}(0,T;\widetilde{W}))$	is	defined	as	following:
 	$$L^4_{w-*}(\Omega;L^\infty(0,T;\widetilde{W}))=\{ 	u:\Omega\to	L^\infty(0,T;\widetilde{W})	\text{	is	 weakly-* measurable		and	}	\E\Vert	u\Vert_{L^\infty(0,T;\widetilde{W})}^4<\infty\},$$
 	see   \cite[Thm. 8.20.3]{Edwards}  and  \cite[Lemma 4.3]{Shang-Zhai-Zhang} for a similar argument.
 	 Hence,  Banach–Alaoglu theorem's ensures  \eqref{cv-*}	and	$y_\infty\in  L_{w-*}^4(\bar\Omega, L^\infty(0,T;\widetilde{W}) $.	\\

Since  $ \bar y_n  \text{ converges strongly to  } y_\infty \text{ in } \mathcal{C}([0,T],(W^{2,4}(D))^d)   \bar P\text{-a.s. in  } \bar \Omega$, then $y_n(t) $ converges to $y_\infty(t)$  in $ (W^{2,4}(D))^d$ $\bar P$ a.s. in $\bar \Omega$, for any $t\in [0,T]$. Hence $\Vert \bar y_n(t)\Vert_{W^{2,4}} \to \Vert \bar y_\infty(t)\Vert_{W^{2,4}} $  $\bar P$-a.s. in $\bar \Omega$, for any $t\in [0,T]$. Since $0 \leq \theta_M(\cdot) \leq 1$, Lebesgue convergence theorem ensures \eqref{cv4}.
	\\	 
	
By combining the convergence $(3)$ in Lemma \ref{skorohod-cv} and the Vitali’s theorem,
we obtain  \eqref{force-cv} and \eqref{initial-cv}. The equality $ y_\infty(0)=\bar y_0$ is a consequence of \eqref{cv2}.
\end{proof}

\bigskip

We recall that $\mathcal{L}(P_nU, P_ny_0)=\mathcal{L}(\bar U_n, \bar y_0^n)$  and $(P_nU,P_ny_0)$ converges strongly to $(U,y_0)$ in the space
$L^4(\Omega;L^4(0,T;H^1(D)))\times L^4(\Omega,\widetilde{W})$.
Therefore, we have
	\begin{align}\label{equality-law}
\mathcal{L}(\bar U)= \mathcal{L}(U) \text{  and  } \mathcal{L}(\bar y_0)= \mathcal{L}(y_0).
	\end{align}
\begin{lemma}\label{cv-result-martingale} The following convergences hold, as $n\to \infty$
	\begin{align}
	&\theta_M(\bar y_n)(\bar y_n\cdot \nabla)\bar v_n \to \theta_M(y_\infty)(y_\infty\cdot \nabla)v_\infty \text{  in  } L^1(\Omega_T,V^\prime), \label{nonlinear1}\\[0.2cm]
	&\sum_{j}\theta_M(\bar y_n)\bar v_n^j\nabla \bar y^j_n \to \sum_{j}\theta_M( y_\infty) v_\infty^j\nabla  y^j_\infty \text{  in  } L^1(\Omega_T,V^\prime),\label{nonlinear2}\\[0.2cm]
	&\theta_M(\bar y_n)\text{div}(\bar A_n^2) \to \theta_M(y_\infty)\text{div}(A_\infty^2) \text{  in  } L^1(\Omega_T,V^\prime)\label{nonlinear3},\\[0.2cm]
	&\theta_M(\bar y_n) \text{div}(|\bar A_n|^2\bar A_n) \to \theta_M(y_\infty) \text{div}(|A_\infty|^2A_\infty)\text{  in  } L^1(\Omega_T,V^\prime)\label{nonlinear4} \\[0.2cm]
	&\theta_M(\bar y_n)G(\cdot,\bar y_n) \to \theta_M( y_\infty )G(\cdot, y_\infty) \text{ in  } L^2\Big(\bar \Omega, L^2\big(0,T:L_2(\mathbb{H},(L^2(D))^d)\big)\Big) \label{stochastic-cv},
	\end{align}
	where we use the notations $\bar v_n=v(\bar y_n)$ and $v_\infty=v(y_\infty)$.
\end{lemma}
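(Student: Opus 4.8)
\emph{Plan.} I would prove all of \eqref{nonlinear1}--\eqref{stochastic-cv} by the same scheme: for each term, insert intermediate quantities so that the difference splits into a piece carrying the factor $\theta_M(\bar y_n)-\theta_M(y_\infty)$ and pieces carrying a factor $\bar y_n-y_\infty$ (equivalently $v(\bar y_n)-v(y_\infty)$ or $A(\bar y_n)-A(y_\infty)$); then bound each piece, in the target norm, by a product of one \emph{small} factor (a power of $\|\bar y_n-y_\infty\|_{W^{2,4}}$, or $|\theta_M(\bar y_n)-\theta_M(y_\infty)|$, which is $\le1$ and $\to0$ in every $L^q(\bar\Omega\times(0,T))$ by \eqref{cv4}) and one factor bounded uniformly in $n$ in a suitable $L^q(\bar\Omega;\mathcal{C}([0,T]))$; finally conclude by H\"older's inequality in $\bar\Omega\times(0,T)$ together with \eqref{cv2} and the moment bounds of Lemma~\ref{lemma-estimate-new} (and Lemma~\ref{extra-regularity}). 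The structural fact that makes this possible is that, once $\theta_M$ is present, every nonlinear term of the system --- viewed as an element of $V^\prime$, and of $(L^2(D))^d\hookrightarrow V^\prime$ for \eqref{nonlinear3}--\eqref{nonlinear4} --- depends continuously on the velocity in the $W^{2,4}$-topology on bounded sets, which is precisely the topology in which \eqref{cv2} gives strong convergence.

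\emph{Convection terms \eqref{nonlinear1}--\eqref{nonlinear2}.} I would decompose
\begin{align*}
\theta_M(\bar y_n)(\bar y_n\cdot\grad)v(\bar y_n)-\theta_M(y_\infty)(y_\infty\cdot\grad)v(y_\infty)
&=\big(\theta_M(\bar y_n)-\theta_M(y_\infty)\big)(\bar y_n\cdot\grad)v(\bar y_n)\\
&\quad+\theta_M(y_\infty)\big((\bar y_n-y_\infty)\cdot\grad\big)v(\bar y_n)\\
&\quad+\theta_M(y_\infty)(y_\infty\cdot\grad)\big(v(\bar y_n)-v(y_\infty)\big).
\end{align*}
The only delicate point is that $v(u)=u-\alpha_1\Delta u$ carries a Laplacian, so $(u\cdot\grad)v(u)$ is formally third order; but, using the antisymmetry $b(u,z,\phi)=-b(u,\phi,z)$, for $\phi\in V$ one has $\langle(u\cdot\grad)v(u),\phi\rangle=-\int_D((u\cdot\grad)\phi)\cdot v(u)\,dx$, hence $\|(u\cdot\grad)v(u)\|_{V^\prime}\le C\|u\|_\infty\|v(u)\|_2\le C\|u\|_{W^{2,4}}^2$ via $W^{2,4}(D)\hookrightarrow W^{1,\infty}(D)\cap H^2(D)$ ($d\le3$). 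Consequently the $V^\prime$-norm of the three pieces is bounded by $C|\theta_M(\bar y_n)-\theta_M(y_\infty)|\,\|\bar y_n\|_{W^{2,4}}^2+C\|\bar y_n-y_\infty\|_{W^{2,4}}\|\bar y_n\|_{W^{2,4}}+C(M)\|\bar y_n-y_\infty\|_{W^{2,4}}$, using $\theta_M(y_\infty)\|y_\infty\|_{W^{2,4}}\le2M$ in the last one; integrating over $\bar\Omega\times(0,T)$ and applying H\"older together with \eqref{cv4} and \eqref{cv2} finishes \eqref{nonlinear1}. The term \eqref{nonlinear2} is identical, using $\|\sum_jv(u)^j\grad u^j\|_2\le\|v(u)\|_2\|\grad u\|_\infty\le C\|u\|_{W^{2,4}}^2$.

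\emph{Stress terms \eqref{nonlinear3}--\eqref{nonlinear4}.} Here $\text{div}(A(u)^2),\ \text{div}(|A(u)|^2A(u))\in(L^2(D))^d\hookrightarrow V^\prime$, with $\|\text{div}(A(u)^2)\|_2\le C\|\grad u\|_\infty\|\grad^2u\|_2\le C\|u\|_{W^{2,4}}^2$ and $\|\text{div}(|A(u)|^2A(u))\|_2\le C\|\grad u\|_\infty^2\|\grad^2u\|_2\le C\|u\|_{W^{2,4}}^3$. I would expand $A(\bar y_n)^2-A(y_\infty)^2=(A(\bar y_n)-A(y_\infty))A(\bar y_n)+A(y_\infty)(A(\bar y_n)-A(y_\infty))$ and the analogous telescoping identity for $|A|^2A$, add the $\theta_M$-difference term as before, and bound the $L^2(D)$-norm of the difference by $C|\theta_M(\bar y_n)-\theta_M(y_\infty)|\,\|\bar y_n\|_{W^{2,4}}^k$ plus $C(M)\|\bar y_n-y_\infty\|_{W^{2,4}}$ times a polynomial of degree $k-1$ in $\|\bar y_n\|_{W^{2,4}}$ ($k=2$ resp.\ $k=3$; $\theta_M(y_\infty)$ absorbs all $y_\infty$-powers into a constant $C(M)$). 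Since $\mathcal{H}_0$ holds with $p>4$, \eqref{cv2} gives $L^4(\bar\Omega;\mathcal{C}([0,T]))$-bounds on $\|\bar y_n\|_{W^{2,4}}$, enough via H\"older to treat the cubic products, so \eqref{cv2} and \eqref{cv4} yield \eqref{nonlinear3}--\eqref{nonlinear4}.

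\emph{Noise term \eqref{stochastic-cv}, and the main obstacle.} I would expand the Hilbert--Schmidt norm, telescope, and use \eqref{noise1}, $\mathbb{G}^2(t,\lambda)\le L|\lambda|^2$ and $0\le\theta_M\le1$ to get
\[
\big\|\theta_M(\bar y_n)G(\cdot,\bar y_n)-\theta_M(y_\infty)G(\cdot,y_\infty)\big\|_{L_2(\mathbb{H},(L^2(D))^d)}^2
\le2L\big|\theta_M(\bar y_n)-\theta_M(y_\infty)\big|^2\|\bar y_n\|_2^2+2L\|\bar y_n-y_\infty\|_2^2,
\]
then integrate over $\bar\Omega\times(0,T)$ and apply \eqref{cv4} with H\"older to the first term and \eqref{cv2} (with $\|\cdot\|_2\le C\|\cdot\|_{W^{2,4}}$) to the second. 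The step I expect to be the only genuine obstacle is the convection term $(u\cdot\grad)v(u)$: a direct estimate controls it merely in $L^1(\Omega_T,V^\prime)$ uniformly in $n$, which is too weak; the antisymmetry of $b$ reduces the regularity of $u$ actually needed to $W^{2,4}$, and the cut-off together with the strong convergence \eqref{cv2} \emph{in that very space} then permits the passage to the limit --- which is exactly why the cut-off system was introduced. Everything else is routine bookkeeping with H\"older's inequality and the uniform estimates.
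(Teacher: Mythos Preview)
Your proposal is correct and follows essentially the same route as the paper: telescope each nonlinear term into a $\theta_M$-difference piece and $\bar y_n-y_\infty$ pieces, use the antisymmetry of $b$ to estimate the convection terms in $V'$ via $W^{2,4}$-norms only, bound the stress terms directly in $L^2(D)$, and conclude by H\"older together with \eqref{cv2} and the uniform moment bounds. The only cosmetic difference is that the paper handles the $\theta_M$-difference via the Lipschitz estimate $|\theta_M(\bar y_n)-\theta_M(y_\infty)|\le K(M)\|\bar y_n-y_\infty\|_{W^{2,4}}$ (so everything is controlled by \eqref{cv2} alone), whereas you invoke \eqref{cv4} and the cut-off bound $\theta_M(y_\infty)\|y_\infty\|_{W^{2,4}}\le 2M$; both are equivalent here.
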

\begin{proof}
	It is worth  recalling that for any $u_1,u_2 \in (W^{2,4}(D))^d$ 
	\begin{align}\label{cut-property}
	\vert \theta_M(u_1)-\theta_M(u_2)\vert \leq K(M)\Vert u_1-u_2 \Vert_{W^{2,4}}  \text{ and  } \theta_M(u_1) \leq 1.
	\end{align}
	Let $\varphi \in V$. Using   \eqref{cut-property} we  write	
  \begin{align*}
	&\vert \left(\{\theta_M(\bar y_n)(\bar y_n\cdot \nabla)v(\bar y_n)-\theta_M(y_\infty)(y_\infty\cdot \nabla)v(y_\infty )\},\varphi\right)\vert\\
		&=\left\vert -[\theta_M(\bar y_n)-\theta_M(y_\infty)]b(\bar y_n,\varphi,v(\bar y_n))-\theta_M(y_\infty)[b(\bar y_n-y_\infty,\varphi,v(\bar y_n))-b(y_\infty,\varphi,v(\bar y_n)-v(y_\infty))]\right\vert \\
	&\leq K(M)\Vert  \bar y_n-y_\infty\Vert_{W^{2,4}}\Vert \bar y_n\Vert_{4}\Vert \varphi\Vert_V\Vert \bar y_n\Vert_{W^{2,4}}+\Vert \bar y_n-y_\infty\Vert_{4}\Vert \varphi\Vert_V\Vert \bar y_n\Vert_{W^{2,4}}+\Vert y_\infty\Vert_{4}\Vert \varphi\Vert_V\Vert \bar y_n-y_\infty \Vert_{W^{2,4}}\\
	&\leq K(M) \Vert \varphi\Vert_V
	\Vert  \bar y_n-y_\infty\Vert_{W^{2,4}}
	\left(1+\Vert \bar y_n\Vert_{W^{2,4}}^2+\Vert \bar y_n\Vert_{W^{2,4}}+\Vert y_\infty\Vert_{4}
	\right)	.
	\end{align*}
This estimate together with the Lemma \ref{lemma-estimate-new} and convergence \eqref{cv2} give
	\begin{align*}
	&\bar\E\int_0^T \left(\{\theta_M(\bar y_n)(\bar y_n\cdot \nabla)v(\bar y_n)-\theta_M(y_\infty)(y_\infty\cdot \nabla)v(y_\infty )\},\varphi\right) dt\\&\leq 
 K(M, \Vert \varphi\Vert_V)\bar\E\int_0^T  
	\Vert  \bar y_n-y_\infty\Vert_{W^{2,4}}
	\left(1+\Vert \bar y_n\Vert_{W^{2,4}}^2+\Vert \bar y_n\Vert_{W^{2,4}}+\Vert y_\infty\Vert_{4}
	\right)	 dt		
\\
&\leq K(M, \Vert \varphi\Vert_V)\Vert  \bar y_n-y_\infty\Vert_{L^4(\bar\Omega_T;(W^{2,4}(D))^d)}\to 0.	\end{align*}
In a similar way, we can deduce  
\eqref{nonlinear2}. Namely, we have
\begin{align*}
	&\vert\sum_{j}\big(\theta_M(\bar y_n)v(\bar y_n)^j\nabla \bar y_n^j-\theta_M(y_\infty)v(y_\infty)^j\nabla y_\infty^j,\varphi\big)\vert \\
	&=\vert[\theta_M(\bar y_n)-\theta_M(y_\infty)]b(\varphi,\bar y_n,v(\bar y_n))+\theta_M(y_\infty)[b(\varphi,\bar y_n,v(\bar y_n)-v(y_\infty))+b(\varphi,\bar y_n-y_\infty,v(y_\infty))]\vert\\
	&\leq  K(M)\Vert \varphi\Vert_{V}\Vert \bar y_n-y_\infty\Vert_{W^{2,4}}\left(1+\|\bar y_n\|^2_W+
	\| y_\infty\|_W\right),
\end{align*}
and using again Lemma \ref{lemma-estimate-new} and convergence \eqref{cv2}, we deduce
	\begin{align*}
\bar\E\int_0^T  \Vert \varphi\Vert_{V}\Vert \bar y_n-y_\infty\Vert_{W^{2,4}}\left(1+\|\bar y_n\|^2_W+
	\| y_\infty\|_W\right) dt
\to 0,	\end{align*}
which yields \eqref{nonlinear2}. Proceeding with the same reasoning, we derive
\begin{align*}
&\vert \big(\theta_M(\bar y_n)\text{div}(A(\bar y_n)^2)-\theta_M( y_\infty)\big(\text{div}(A( y_\infty)^2),\varphi\big)\vert=\vert\big([\theta_M(\bar y_n)-\theta_M( y_\infty)]\text{ div}(A(\bar y_n)^2),\varphi \big)\\
&+\theta_M( y_\infty)\big( \text{ div}([A(\bar y_n)-A( y_\infty)] A(\bar y_n))+\text{ div}(A( y_\infty)[A(\bar y_n)-A( y_\infty)]),\varphi)\big)\vert\\
&\leq \vert \theta_M(\bar y_n)-\theta_M( y_\infty)\vert \Vert \bar y_n\Vert_{W^{1,\infty}}\Vert \bar y_n\Vert_{H^2}\Vert \varphi\Vert_{2}+(\Vert \bar y_n\Vert_{W^{1,\infty}}+\Vert  y_\infty\Vert_{W^{1,\infty}})\Vert \bar y_n- y_\infty\Vert_{H^2}\Vert \varphi\Vert_{2}\\
&+(\Vert \bar y_n\Vert_{H^2}+\Vert  y_\infty\Vert_{H^2})\Vert \bar y_n- y_\infty\Vert_{W^{1,\infty}}\Vert \varphi\Vert_{2}\\
&\leq K(M)\Vert \bar y_n- y_\infty\Vert_{W^{2,4}}
 \Vert \bar y_n\Vert_{W^{1,\infty}}\Vert \bar y_n\Vert_{H^2}\Vert \varphi\Vert_{2}
 +(\Vert \bar y_n\Vert_{W^{1,\infty}}
 +\Vert  y_\infty\Vert_{W^{1,\infty}})\Vert \bar y_n- y_\infty\Vert_{H^2}\Vert \varphi\Vert_{2}\\
&\qquad+(\Vert \bar y_n\Vert_{H^2}+\Vert  y_\infty\Vert_{H^2})\Vert \bar y_n- y_\infty\Vert_{W^{1,\infty}}\Vert \varphi\Vert_{2}\\
&\leq K(M)\Vert \varphi\Vert_V\Vert \bar y_n- y_\infty\Vert_{W^{2,4}}\left(
 \Vert \bar y_n\Vert_{W^{1,\infty}}\Vert \bar y_n\Vert_{H^2}
 +\Vert \bar y_n\Vert_{W^{1,\infty}}
 +\Vert  y_\infty\Vert_{W^{1,\infty}}
 +\Vert \bar y_n\Vert_{H^2}+\Vert  y_\infty\Vert_{H^2}
\right),
\end{align*}
and conclude that
\begin{align*}
\bar\E\int_0^T  \vert \big(\theta_M(\bar y_n)\text{div}(A(\bar y_n)^2)-\theta_M( y_\infty)\big(\text{div}(A( y_\infty)^2),\varphi\big)\vert  dt
\to 0.	\end{align*}

\item  Concerning \eqref{nonlinear4}, we have
\begin{align*}
&\vert\big(\theta_M(\bar y_n)\text{div}(|A(\bar y_n)|^2A(\bar y_n))-\theta_M(y_\infty)\text{div}(|A(y_\infty)|^2A(y_\infty)),\varphi\big)\vert\\
&=\vert(\theta_M(\bar y_n)-\theta_M(y_\infty))\big(\text{div}(|A(\bar y_n)|^2A(\bar y_n)),\varphi\big)+\theta_M(y_\infty)\big(\text{div}(|A(\bar y_n)|^2A(\bar y_n-y_\infty)),\varphi\big)\\&\qquad+\theta_M(y_\infty)\big(\text{div}([A(\bar y_n)\cdot A(\bar y_n-y_\infty)+A(\bar y_n-y_\infty)\cdot A(y_\infty)]A(y_\infty)),\varphi\big)\vert\\
&\leq K(M)\Vert \bar y_n-y_\infty\Vert_{W^{2,4}}\Vert \bar y_n\Vert_{W^{1,\infty}}^2\Vert \bar y_n\Vert_{H^2}\Vert \varphi\Vert_{2}\\
&\quad+C(\Vert y_\infty\Vert_{W^{1,\infty}}\Vert \bar y_n\Vert_{H^2}+\Vert \bar y_n\Vert_{W^{1,\infty}}\Vert y_\infty\Vert_{H^2})\Vert \bar y_n-y_\infty\Vert_{W^{1,\infty}}\Vert \varphi\Vert_{2}\\
&\quad+C(\Vert \bar y_n\Vert_{W^{1,\infty}}+\Vert y_\infty\Vert_{W^{1,\infty}})\Vert y_\infty\Vert_{W^{1,\infty}}\Vert \bar y_n-y_\infty\Vert_{H^2}\Vert \varphi\Vert_{2}+C\Vert \bar y_n-y_\infty\Vert_{W^{1,\infty}}\Vert y_\infty\Vert_{H^2}\Vert y_\infty\Vert_{W^{1,\infty}}\Vert \varphi\Vert_{2},
\end{align*}
which gives
\begin{align*}
\bar\E\int_0^T  \vert \big(\theta_M(\bar y_n)\text{div}(|A(\bar y_n)|^2A(\bar y_n))-\theta_M(y_\infty)\text{div}(|A(y_\infty)|^2A(y_\infty)),\varphi\big)\vert  dt
\to 0.	\end{align*}
 Finally, the property \eqref{noise1} and \eqref{cut-property} allow to write
 \begin{align*}
&\Vert \theta_M(\bar y_n)G(\cdot,\bar y_n)-\theta_M( y_\infty )G(\cdot, y_\infty)\Vert_{L^2\big(\bar \Omega, L^2\big(0,T:L_2(\mathbb{H},(L^2(D))^d)\big)\big)}^2\\
&= \bar\E\sum_{\k \ge 1}\int_0^{T} \Vert \theta_M(\bar y_n)\sigma_\k(\cdot,\bar y_n)-\theta_M(y_\infty)\sigma_\k(\cdot,y_\infty) \Vert_2^2dt\nonumber\\
&\leq K(M) \bar\E\sum_{\k \ge 1}\int_0^{T} \Big(\vert \theta_M(\bar y_n)-\theta_M(y_\infty)\vert^2\Vert \sigma_\k(\cdot,\bar y_n) \Vert_2^2 +|\theta_M(y_\infty)|^2\Vert\sigma_\k(\cdot,\bar y_n)-\sigma_\k(\cdot,y_\infty) \Vert_2^2\Big)dt\nonumber\\
&\leq K(M,L) \bar\E\int_0^{T}\Vert \bar y_n-y_\infty\Vert_{W^{2,4}}^2\left(1+\|\bar y_n\|^2_2\right)dt . 
\end{align*}
 Using Lemma \ref{lemma-estimate-new} and \eqref{cv2},
we   obtain 
\begin{align*}
 \bar\E\int_0^{T}\Vert \bar y_n-y_\infty\Vert_{W^{2,4}}^2\left(1+\|\bar y_n\|^2_2\right)dt \to 0, \text{  as  } n\to \infty, 
\end{align*}
which give \eqref{stochastic-cv}.
\end{proof}

The  convergence    \eqref{stochastic-cv} implies the following convergence of the stochastic term.

\begin{lemma}\label{stochastic-integrale-cv} We have 
	\begin{align}
	\int_0^\cdot\theta_M(\bar y_n)G(\cdot,\bar y_n)d\bar{\mathcal{W}}_n &\to  \int_0^\cdot\theta_M(y_\infty)G(\cdot,y_\infty)d\mathcal{W}_\infty \text{ in }  L^2(\overline\Omega;L^2(0,T;(L^2(D))^d)), \quad \text{as }n\to \infty.\label{cv3}
	\end{align}
\end{lemma}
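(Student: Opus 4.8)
The plan is to reduce \eqref{cv3} to the two facts already established, namely the $L^2$-convergence of the integrands \eqref{stochastic-cv} and the $L^2(\bar\Omega;\mathcal{C}([0,T];H_0))$-convergence of the driving processes (Lemma \ref{Wienr-limit}). Splitting,
\[
\int_0^\cdot\theta_M(\bar y_n)G(\cdot,\bar y_n)\,d\bar{\mathcal{W}}_n-\int_0^\cdot\theta_M(y_\infty)G(\cdot,y_\infty)\,d\mathcal{W}_\infty=R_n^1+R_n^2,
\]
with $R_n^1=\int_0^\cdot[\theta_M(\bar y_n)G(\cdot,\bar y_n)-\theta_M(y_\infty)G(\cdot,y_\infty)]\,d\bar{\mathcal{W}}_n$ and $R_n^2=\int_0^\cdot\theta_M(y_\infty)G(\cdot,y_\infty)\,d(\bar{\mathcal{W}}_n-\mathcal{W}_\infty)$. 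To give these terms a meaning (the integrand $\theta_M(y_\infty)G(\cdot,y_\infty)$ is $(\mathcal{F}_t^\infty)$-predictable while $\bar{\mathcal{W}}_n$ is a $Q$-Wiener process only with respect to $(\mathcal{F}_t^n)$), I would first pass to the augmentation $(\mathcal{G}_t)$ of the filtration generated by all the Skorohod variables $(\bar{\mathcal{W}}_m,\bar y_m,\int_0^\cdot\bar U_m(r)\,dr)_{m\in\mathbb{N}}$ together with $(\mathcal{W}_\infty,y_\infty,\int_0^\cdot\bar U(r)\,dr)$: since the Skorohod construction preserves the Gaussian law and the independence of increments, each $\bar{\mathcal{W}}_n$ and $\mathcal{W}_\infty$ remains a $Q$-Wiener process with respect to $(\mathcal{G}_t)$ (cf. Lemmas \ref{Lemma-Wiener-k} and \ref{Wienr-limit}), all the integrands above become $(\mathcal{G}_t)$-predictable, and the corresponding It\^o integrals coincide with those built over $(\mathcal{F}_t^n)$, respectively $(\mathcal{F}_t^\infty)$. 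From now on all integrals are understood with respect to $(\mathcal{G}_t)$.

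For $R_n^1$ I would apply the It\^o isometry with respect to $\bar{\mathcal{W}}_n$ (recall $Q^{1/2}(H_0)=\mathbb{H}$): integrating in time,
\[
\bar\E\,\Vert R_n^1\Vert_{L^2(0,T;(L^2(D))^d)}^2\leq T\,\bar\E\int_0^T\Vert\theta_M(\bar y_n)G(\cdot,\bar y_n)-\theta_M(y_\infty)G(\cdot,y_\infty)\Vert_{L_2(\mathbb{H},(L^2(D))^d)}^2\,ds,
\]
which tends to $0$ by \eqref{stochastic-cv}. For $R_n^2$ the integrand is fixed and only the integrator varies; here I would approximate $\theta_M(y_\infty)G(\cdot,y_\infty)$ in $L^2(\bar\Omega\times(0,T);L_2(\mathbb{H},(L^2(D))^d))$ by elementary $(\mathcal{G}_t)$-adapted processes $\Phi^\varepsilon$ taking finitely many values in $L(H_0,(L^2(D))^d)$ (these are dense, cf. \cite{Daprato}). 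For each fixed $\Phi^\varepsilon=\sum_k\mathbf{1}_{(t_{k-1},t_k]}\xi_k$ one has $\int_0^\cdot\Phi^\varepsilon\,d(\bar{\mathcal{W}}_n-\mathcal{W}_\infty)\to0$ in $L^2(\bar\Omega;L^2(0,T;(L^2(D))^d))$ as $n\to\infty$, directly from the $L^2(\bar\Omega;\mathcal{C}([0,T];H_0))$-convergence of $\bar{\mathcal{W}}_n-\mathcal{W}_\infty$, while the replacement error $\int_0^\cdot(\theta_M(y_\infty)G(\cdot,y_\infty)-\Phi^\varepsilon)\,d\bar{\mathcal{W}}_n$ and its analogue with $\mathcal{W}_\infty$ are bounded, uniformly in $n$, by $C\Vert\theta_M(y_\infty)G(\cdot,y_\infty)-\Phi^\varepsilon\Vert_{L^2(\bar\Omega\times(0,T);L_2(\mathbb{H},(L^2(D))^d))}$ via the It\^o isometry. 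Letting first $n\to\infty$ and then $\varepsilon\to0$ gives $R_n^2\to0$, and combining with the bound on $R_n^1$ yields \eqref{cv3}.

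The main technical obstacle is precisely the filtration bookkeeping of the first paragraph: making both It\^o integrals, driven by different Wiener processes on the same sample space, live on a common filtered probability space on which each process is still a $Q$-Wiener process, the integrands remain adapted, and the Skorohod-reconstructed integrals agree with the original ones; once this is arranged, the two estimates above are routine. Alternatively, one may avoid the explicit splitting altogether and invoke the by now standard stability result for stochastic integrals under joint convergence of integrands and integrators, along the same lines as \cite{Bensoussan95}, already used in this section to recover \eqref{approximation-new-Omega} on the new probability space.
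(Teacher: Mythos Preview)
Your splitting into $R_n^1+R_n^2$ is natural, but the filtration bookkeeping you flag as ``the main technical obstacle'' is in fact a genuine gap, not a routine matter. The Skorohod representation gives the correct \emph{marginal} laws $\mathcal{L}(\bar{\mathcal{W}}_n,\bar y_n,\bar U_n,\bar y_0^n)=\mu_n$, but imposes no structure on the \emph{joint} dependence across different indices $n$. Consequently there is no reason for the increment $\bar{\mathcal{W}}_n(t)-\bar{\mathcal{W}}_n(s)$ to be independent of $\mathcal{G}_s$ once $\mathcal{G}_s$ contains information from $\bar y_m$, $\bar{\mathcal{W}}_m$ with $m\neq n$ and from $y_\infty$. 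Hence you cannot assert that each $\bar{\mathcal{W}}_n$ is a $Q$-Wiener process for $(\mathcal{G}_t)$, and both uses of the It\^o isometry (for $R_n^1$, and for the replacement error in $R_n^2$) lose their justification: the integrand $\theta_M(y_\infty)G(\cdot,y_\infty)$ is $(\mathcal{F}^\infty_t)$-predictable, not $(\mathcal{F}^n_t)$-predictable, and there is no common filtration on which it is adapted and $\bar{\mathcal{W}}_n$ is still a Wiener process.

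The paper's proof sidesteps this entirely, essentially following the alternative you mention in your last sentence. It does \emph{not} split the difference; instead it feeds the integrand convergence \eqref{stochastic-cv} and the driver convergence of Lemma~\ref{Wienr-limit} into the stability lemma \cite[Lemma~2.1]{Debussche11} (rather than \cite{Bensoussan95}), which delivers convergence \emph{in probability} of $\int_0^\cdot\theta_M(\bar y_n)G(\cdot,\bar y_n)\,d\bar{\mathcal{W}}_n$ to $\int_0^\cdot\theta_M(y_\infty)G(\cdot,y_\infty)\,d\mathcal{W}_\infty$ in $L^2(0,T;(L^2(D))^d)$ without ever forming the mixed integrals $R_n^1,R_n^2$. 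The upgrade to $L^2(\bar\Omega)$-convergence is then obtained by a uniform $L^4$ bound on the stochastic integrals (BDG plus the $L^4$ bound on $\bar y_n$ from Lemma~\ref{lemma-estimate-new}) and Vitali's theorem. If you want to keep an explicit argument, the clean route is precisely this one: convergence in probability via the Debussche--Glatt-Holtz--Temam lemma, then uniform integrability.
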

\begin{proof}
	On	the	one	hand,	from		\eqref{stochastic-cv}	we	have
		$$\theta_M(\bar y_n)G(\cdot,\bar y_n) \to \theta_M( y_\infty )G(\cdot, y_\infty) \text{ in  } L^2\Big(\bar \Omega, L^2\big(0,T:L_2(\mathbb{H},(L^2(D))^d)\big)\Big)$$
		From	Lemma	\ref{Wienr-limit},	we	have
	$\bar{\mathcal{W}}_n$	converges	to	$\mathcal{W}_\infty$	in	$L^2(\bar{\Omega},\mathcal{C}([0,T], H_0))$.	In	addition,	$\theta_M(y_\infty)G(\cdot,y_\infty)	\in	L^2(0,T;L_2(\mathbb{H},(L^2(D))^d))$	is	$\mathcal{F}^\infty_t$-predictable,	since	$y_\infty$	is	$\mathcal{F}^\infty_t$-predictable	and		$G$	satisfies	\eqref{noise1}.
	Now,	we	are	in	position	to	use	\cite[Lemma	2.1]{Debussche11}	and	deduce
	for	any	$t\in[0,T]$\begin{align*}
	\int_0^t	G(\cdot,\bar{y}_n)d\bar{	\mathcal{W}}_n\to	\int_0^t	G(\cdot,y_\infty)d\mathcal{W}_\infty	\text{ in	probability	in  } L^2(0,T;(L^2(D))^d)).
	\end{align*}
	To	obtain	\eqref{cv3},	note	that	for	any	$t\in[0,T]$
	\begin{align*}
	\overline{\E}\vert	\int_0^t	G(\cdot,\bar{y}_n)d\bar{	\mathcal{W}}_n\vert^4\leq	C\overline{\E}\big[\sum_{\k \ge 1}\int_0^{T}\Vert\sigma_\k(\cdot,\bar{y}_n)\Vert_{2}^2ds\big]^{2}\leq	CL\overline{\E}\big[\int_0^{T}\Vert\bar{y}_n\Vert_{2}^2ds\big]^{2}\leq	CLT\overline{\E}\big[\int_0^{T}\Vert\bar{y}_n\Vert_{2}^4ds\big]\leq	K,
	\end{align*}		
	since	$(\bar{y_n})_n$	is	bounded	by	$K$	in	$	L^4(\Omega\times(0,T);H)$,	see	Lemma	\ref{lemma-estimate-new}.	Hence,	$(\int_0^\cdot	G(\cdot,\bar{y}_n)d\bar{	\mathcal{W}}_n)_n$	is	uniformly	integrable	in	$L^p(\overline{	\Omega}),1\leq	p<4$	and	 Vitali's  theorem	implies	\eqref{cv3}.
	\end{proof}
\subsection{Proof of Theorem \ref{exis-thm-mart}}\label{section-proof1}
	Let $e_i \in W_n$ and $t\in [0,T]$, from \eqref{approximation-new-Omega} we have 
	\begin{align}\label{pass-limite-mart}
	\begin{cases}
	(v(\bar y_n(t)),e_i)-(v(\bar y_{0}^n),e_i)=\displaystyle\int_0^t\big(\nu \Delta \bar y_n-\theta_M(\bar y_n)(\bar y_n\cdot \nabla)v(\bar y_n)
	&\\[0.15cm]
	\hspace*{2cm}-\sum_{j}\theta_M(\bar y_n)v(\bar y_n)^j\nabla \bar y^j_n
	+(\alpha_1+\alpha_2)\theta_M(\bar y_n)\text{div}(A(\bar y_n)^2) &\\[0.15cm]
	\hspace*{2cm}+\beta\theta_M(\bar y_n) \text{div}(|A(\bar y_n)|^2A(\bar y_n))+\bar U_n, e_i\big)dt+ \displaystyle\int_0^t\big(\theta_M(\bar y_n)G(\cdot,\bar y_n),e_i\big)d\bar{\mathcal{W}}_n, 
	\\[0.15cm]
	\bar y_n(0)=\bar y_{0}^n.	\end{cases}
	\end{align}
	By letting $n\to \infty$ in \eqref{pass-limite-mart},
	and combining Lemmas  \ref{lem-cv1},  \ref{cv-result-martingale} and  \ref{stochastic-integrale-cv} and the equality \ref{equality-law}, we deduce
	\begin{align}\label{limite-mart}
	\begin{cases}
	(v(y_\infty(t)),e_i)&\hspace{-0.3cm}-(v(\bar y_0),e_i)=\displaystyle\int_0^t\big(\nu \Delta y_\infty-\theta_M(y_\infty)(y_\infty\cdot \nabla)v(y_\infty)-\sum_{j}\theta_M(y_\infty)v(y_\infty)^j\nabla  y_{\infty}^j\\
	&+(\alpha_1+\alpha_2)\theta_M(y_\infty)\text{div}(A(y_\infty)^2)+\beta\theta_M(y_\infty) \text{div}(|A(y_\infty)|^2A(y_\infty))+\bar U, e_i\big)dt \\[0.15cm]
	&\quad+\displaystyle\int_0^t\big(\theta_M(y_\infty)G(\cdot,y_\infty),e_i\big)d\mathcal{W}_\infty, \\[0.15cm]
	y_\infty(0)=\bar y_{0}.\end{cases}
	\end{align}
	Since $W$ is separable Hilbert space, the last equality holds for any $\phi \in W$.  Consequently, P-a.s. and for any $t\in [0,T]$
			\begin{align}\label{Martingale-sol-eqn}
		(y_\infty(t),\phi)_V&=(y_\infty(0),\phi)_V+\displaystyle\int_0^t\big\{\big(\nu \Delta y_\infty-\theta_M(y_\infty)(y_\infty\cdot \nabla)v(y_\infty)-\sum_{j}\theta_M(y_\infty)v(y_\infty)^j\nabla y_\infty^j\nonumber\\&+(\alpha_1+\alpha_2)\theta_M(y_\infty)\text{div}[A(y_\infty)^2],\phi\big) +\big(\beta \theta_M(y_\infty)\text{div}[|A(y_\infty)|^2A(y_\infty)]+\bar U,\phi\big)\big\}dt\nonumber\\&\quad+\displaystyle \int_0^t\theta_M(y_\infty)\big(G(\cdot,y_\infty),\phi\big)
		d{\mathcal{W}}_\infty \text{ for all } \phi \in V,
		\end{align}	  and
		$\mathcal{L}(y_\infty(0),\bar U)=\mathcal{L}(y_0,U)$.\\

	It is very important to note that, \textit{a priori}, \eqref{Martingale-sol-eqn} holds $\bar P$-a.s, for all $t\in [0,T]$ in $V^\prime$ but  we have proved that $y_\infty \in L^p(\bar\Omega; L^\infty(0,T;\widetilde{W}))$, which ensures  that the third derivative of $y_\infty$ belongs to  $L^p(\bar\Omega; L^\infty(0,T;(L^2(D))^d))$. Therefore,  \eqref{Martingale-sol-eqn} holds in $L^2(D)$-sense (not in the distributional sense). 


\bigskip
Our aim is to construct probabilistic strong solution. The idea is to prove an uniqueness result and  use the link between probabilistic weak and  strong solutions via Yamada-Watanabe theorem. Unfortunately, the solution of \eqref{Martingale-sol-eqn} is governed by strongly non-linear system and the uniqueness for \eqref{Martingale-sol-eqn} does not hold globally in time. For that, we will introduce a modified problem based on \eqref{Martingale-sol-eqn}, where the uniqueness holds, then we will use the generalization of Yamada-Watanable-Engelbert theorem (see \cite{Kurtz}) to get a probabilistic strong solution for the modifed problem. This will be the aim of the next Section \ref{Sec-Strong}.

\section{The strong solution}\label{Sec-Strong}
\subsection{Local martingale solution of \eqref{I}}\label{local-martingale-subsection}
In order to define strong local solution to \eqref{I}, we need to construct the solution on the initial probability space. For that, 
define the following sequence of stopping time
$$\tau_M:=\inf\{ t\geq 0: \Vert y_\infty(t)\Vert_{W^{2,4}} \geq M\}\wedge T. $$
From  \eqref{cv2},  we recall that $y_\infty \in L^2(\bar \Omega; \mathcal{C}\big([0,T];(W^{2,4}(D))^d)\big)$  and $\tau_M$ is well-defined stopping time. It's worth noting that, since $y_\infty$ is bounded in $L^p(\bar \Omega;L^\infty(0,T;\widetilde{W}))$, $\tau_M$ is a.s. strictly positive provided $M$ is chosen large enough. Then $(y_\infty,\tau_M)$ is a local martingale solution of \eqref{I} such that $$y_\infty(\cdot \wedge \tau_M) \in \mathcal{C}([0,T]; (W^{2,4}(D))^d)\quad\bar P \text{ a.s.}$$
and $y_\infty(\cdot \wedge \tau_M) \in L^p(\bar \Omega;L^\infty(0,T;\widetilde{W})).$
Set $\bar y(t):=y_\infty(t\wedge \tau_M)$	for	$t\in[0,T]$ and note that, since $y_\infty$ is continuous, one has
\begin{align}\label{stopping-time}
\tau_M=\inf\{ t\geq 0: \Vert \bar y(t)\Vert_{W^{2,4}} \geq M\} \wedge T. 
\end{align}
We will refer to $\bar y$ as the solution of the "modified problem".
From Theorem \ref{exis-thm-mart}, $(\bar y, \tau_M)$ ($\tau_M$ is given by \eqref{stopping-time}) satisfies  the following equation:
\begin{align}\label{local-martingale}
(\bar y(t),\phi)_V&-\displaystyle\int_0^{t\wedge \tau_M}\big\{(\nu\Delta \bar y-(\bar y\cdot \nabla)v(\bar y)-\sum_{j}v(\bar y)^j\nabla \bar y^j,\phi)
\nonumber\\&\qquad+((\alpha_1+\alpha_2)\text{div}(A(\bar y)^2)+\beta \text{div}(|A(\bar y)|^2A(\bar y))+\bar U,\phi)\big\}ds\nonumber\\&\qquad=(\bar  y(0),\phi)_V+\displaystyle \int_0^{t\wedge\tau_M}(G(\cdot,\bar y),\phi)d\mathcal{\bar W} \quad  \bar P\text{ a.s. in } \bar{\Omega} \text{ for all } t\in[0,T].
\end{align}	
\subsection{Local stability for \eqref{local-martingale}} Our aim is to  prove the following  stability result of \eqref{local-martingale}.
\begin{lemma}\label{Lemma-V-stability} Assume that $(\mathcal{W}(t))_{t\geq 0}$ is a cylindrial Wiener process in $H_0$ with respect to the stochastic basis $(\Omega,\mathcal{F}, (\mathcal{F}_t)_{t\geq 0},P)$ and $y_1,y_2$ are two  solutions to \eqref{local-martingale} with respect to the initial conditions $y_0^1,y_0^2$ and the forces $U_1,U_2$, respectively, on $(\Omega,\mathcal{F}, (\mathcal{F}_t)_{t\geq 0},P)$. Then, there exists $C(M,L,T)>0$ such that
	\begin{align}
	\E\sup_{s\in [0, \tau_M^1\wedge\tau_M^2]}\Vert y_1(s)-y_2(s)\Vert_V^2
	\leq C(M,L,T)\big[\E\Vert y_0^1-y_0^2\Vert_{V}^2+\E\int_0^{\tau_M^1\wedge\tau_M^2}\Vert U_1(s)-U_2(s)\Vert_{2}^2ds\big].
	\end{align}
\end{lemma}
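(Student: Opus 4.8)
The plan is to derive a closed Gronwall inequality for $\E\sup_{r\le t\wedge\tau}\Vert w(r)\Vert_V^2$, where $w:=y_1-y_2$, $w_0:=y_0^1-y_0^2$ and $\tau:=\tau_M^1\wedge\tau_M^2$. On $[0,\tau]$ one has $\Vert y_i(s)\Vert_{W^{2,4}}\le M$, so the cut-off in \eqref{local-martingale} is inactive, and by the embedding $(W^{2,4}(D))^d\hookrightarrow(W^{1,\infty}(D))^d$ (valid in dimension $2$ and $3$) the quantities $\Vert A(y_i(s))\Vert_{\infty}$ and $\Vert v(y_i(s))\Vert_{4}=\Vert y_i(s)-\alpha_1\Delta y_i(s)\Vert_{4}$ are bounded by $C(M)$ there. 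Subtracting the two stopped equations \eqref{local-martingale}, evaluating at $t\wedge\tau$, and using that $\{e_i\}$ is an orthonormal basis of $V$ with $w(\cdot)\in V$, I apply It\^o's formula to $(w,e_i)_V^2$ and sum over $i$ to obtain, $P$-a.s. for all $t\in[0,T]$, an identity
\[
\Vert w(t\wedge\tau)\Vert_V^2=\Vert w_0\Vert_V^2+\int_0^{t\wedge\tau}\big(2\nu(\Delta w,w)+2\mathcal R(s)+2(U_1-U_2,w)\big)\,ds+\mathcal I(t)+\mathcal M(t),
\]
where $\mathcal R(s)$ denotes the $L^2$-pairing with $w$ of the differences, between $y_1$ and $y_2$, of the convective term $(y\cdot\nabla)v(y)$, of $\sum_j v(y)^j\nabla y^j$, of $(\alpha_1+\alpha_2)\text{div}(A(y)^2)$ and of $\beta\,\text{div}(|A(y)|^2A(y))$; $\mathcal I(t)=\int_0^{t\wedge\tau}\sum_{\k\ge1}\Vert\widetilde\sigma_\k(s)\Vert_V^2\,ds$ with $\widetilde\sigma_\k$ the solution of \eqref{Stokes} with right-hand side $\sigma_\k(\cdot,y_1)-\sigma_\k(\cdot,y_2)$ (the relation \eqref{VF-Stokes} is used to recast the $L^2$-pairings coming from the noise in the $(\cdot,\cdot)_V$-structure); and $\mathcal M(t)=2\int_0^{t\wedge\tau}\big(G(\cdot,y_1)-G(\cdot,y_2),w\big)\,d\mathcal W$.

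I then estimate each term on $[0,\tau]$. The viscous term is non-positive, $2\nu(\Delta w,w)=-4\nu\Vert\mathbb{D}(w)\Vert_2^2\le0$, the boundary contribution from the integration by parts vanishing because $w\cdot\eta=0$ and $(\eta\cdot\mathbb{D}(w))\vert_{\text{tan}}=0$; it is simply discarded. In $\mathcal R(s)$, each of the four differences is expanded by the identity $ab-cd=(a-c)d+c(b-d)$ into terms linear in $w$; one then integrates by parts, exploiting $\text{div}(y_i)=0$ and the Navier conditions to cancel or control the boundary terms, so as to arrange that $w$ carries at most one spatial derivative, that $v(y_i)$ and $A(y_i)$ occur only through $L^2$, $L^4$ or $L^\infty$ norms, and that second derivatives of $y_i$ enter only through $\Vert y_i\Vert_{W^{2,4}}\le M$. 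Together with the embeddings $(W^{2,4}(D))^d\hookrightarrow(W^{1,\infty}(D))^d$ and $(H^1(D))^d\hookrightarrow(L^4(D))^d$ and the norm equivalence $\Vert\cdot\Vert_V\sim\Vert\cdot\Vert_{H^1}$, this yields $|\mathcal R(s)|\le C(M)\Vert w(s)\Vert_V^2$. Young's inequality gives $2(U_1-U_2,w)\le\Vert U_1-U_2\Vert_2^2+\Vert w\Vert_V^2$, while \eqref{VF-Stokes}, Theorem \ref{Thm-Stokes} and \eqref{noise1} give $\Vert\widetilde\sigma_\k(s)\Vert_V\le C\Vert\sigma_\k(\cdot,y_1)-\sigma_\k(\cdot,y_2)\Vert_2$, hence $\mathcal I(t)\le CL\int_0^{t\wedge\tau}\Vert w\Vert_V^2\,ds$.

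For the martingale term, the Burkholder--Davis--Gundy and Young inequalities and \eqref{noise1} yield, for any $\delta>0$,
\[
\E\sup_{r\le t\wedge\tau}|\mathcal M(r)|\le C\,\E\Big[\sum_{\k\ge1}\int_0^{t\wedge\tau}\Vert\sigma_\k(\cdot,y_1)-\sigma_\k(\cdot,y_2)\Vert_2^2\,\Vert w\Vert_2^2\,ds\Big]^{1/2}\le\delta\,\E\sup_{r\le t\wedge\tau}\Vert w(r)\Vert_V^2+C_\delta L\,\E\int_0^{t\wedge\tau}\Vert w\Vert_V^2\,ds.
\]
Taking the supremum over $[0,t\wedge\tau]$ in the It\^o identity, then expectation, choosing $\delta$ small to absorb $\delta\,\E\sup_{r\le t\wedge\tau}\Vert w(r)\Vert_V^2$ on the left, and using $\int_0^{t\wedge\tau}\Vert w(s)\Vert_V^2\,ds\le\int_0^t\sup_{r\le s\wedge\tau}\Vert w(r)\Vert_V^2\,ds$, I arrive at
\[
\E\sup_{r\le t\wedge\tau}\Vert w(r)\Vert_V^2\le C\Big(\E\Vert w_0\Vert_V^2+\E\int_0^{\tau}\Vert U_1-U_2\Vert_2^2\,ds\Big)+C(M,L)\int_0^t\E\sup_{r\le s\wedge\tau}\Vert w(r)\Vert_V^2\,ds.
\]
The left-hand side is finite since $y_i\in L^2(\bar\Omega;\mathcal C([0,T];(W^{2,4}(D))^d))\hookrightarrow L^2(\bar\Omega;\mathcal C([0,T];V))$, so Gronwall's lemma gives the claim with $C(M,L,T)=C\,e^{C(M,L)T}$. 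The main obstacle is the estimate of $\mathcal R$: the second derivatives of $w$ hidden inside $v(w)=w-\alpha_1\Delta w$ must never be estimated on their own --- this would require control of $\Vert w\Vert_{H^2}$, which is not available at the $V$-level --- but must be transferred, by integration by parts using the divergence-free constraint and the Navier boundary conditions, onto coefficients built from $y_i$ and bounded through $M$, leaving only $\nabla w$ on the factors involving $w$; in the $3$D case the boundary integrals produced in this process are genuinely delicate and are handled following \cite[Section 4]{Bus-Ift-2} and \cite[Section 10]{Tah-Cip-2}.
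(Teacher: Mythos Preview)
Your proposal is correct and follows the same overall strategy as the paper: an It\^o identity for $\Vert w\Vert_V^2$, a $C(M)\Vert w\Vert_V^2$ bound on the nonlinear difference, BDG for the martingale, and Gronwall. The one substantive difference is in how the nonlinear part $\mathcal R$ is handled. The paper does not integrate by parts directly on the four original terms as you propose; instead it first passes to the equivalent form of the equation from \cite[Appendix]{Bus-Ift-1}, in which the combination $(y\cdot\nabla)v(y)+\sum_j v(y)^j\nabla y^j-(\alpha_1+\alpha_2)\,\text{div}(A^2)$ is replaced by $(y\cdot\nabla)y-\text{div}\,N(y)$ with $N(y)=\alpha_1\big(y\cdot\nabla A(y)+(\nabla y)^TA(y)+A(y)\nabla y\big)+\alpha_2 A(y)^2$ and $S(y)=\beta|A(y)|^2A(y)$. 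After applying $(I-\alpha_1\mathbb P\Delta)^{-1}$ and It\^o, the paper then only needs to estimate $-b(w,y_1,w)$, $(\text{div}(N(y_1)-N(y_2)),w)$ and $(\text{div}(S(y_1)-S(y_2)),w)$; one integration by parts puts everything at the level of $\nabla w$, the boundary terms vanish directly by the Navier condition (cf.\ \cite[Sect.~3]{Bus-Ift-2}), and the $S$-difference even has a sign. This is cleaner than your route, where the $\Delta w$ hidden in $v(w)$ must be removed by a further integration by parts that produces boundary integrals you rightly call ``genuinely delicate''; your deferral to \cite{Bus-Ift-2,Tah-Cip-2} is appropriate, and the resulting bound $|\mathcal R|\le C(M)\Vert w\Vert_V^2$ is the same, so nothing is lost --- but the paper's detour through the $N,S$ form is what makes the estimate short and the boundary issues transparent.
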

\begin{proof}
	Let $(y_1,\tau_M^1)$ and $(y_2,\tau_M^2)$, where $y_i \in \mathcal{C}([0,T]; (W^{2,4}(D))^d), i=1,2$, P-a.s. be two solutions of \eqref{local-martingale}  with   the initial conditions $y_0^1,y_0^2$ and the forces $U_1,U_2$, respectively.\\

	Set $y=y_1-y_2, y_0=y_0^1-y_0^2$ and $U=U_1-U_2$, then we have for any $t\in [0,\tau_M^1\wedge \tau_M^2]$
	\begin{align*}
	v(y(t))-v(y_0)&=-\int_0^t\nabla(\mathbf{\bar P}_1-\mathbf{\bar P}_2)ds+\nu \int_0^t\Delta y-\big[(y\cdot \nabla)y_1+(y_2\cdot \nabla)y\big]ds\\
	&\hspace*{1cm}+\int_0^t[\text{div}(N(y_1))-\text{div}(N(y_2))]ds+\int_0^t[\text{div}(S(y_1))-\text{div}(S(y_2))]ds\\&\hspace*{4cm}+\int_0^tUds+ \int_0^t[G(\cdot,y_1)-G(\cdot,y_2)]d\mathcal{W}, 
	\end{align*}
where	we	used	an	equivalent	form	of	\eqref{local-martingale},	see	\cite[Appendix]{Bus-Ift-1},	such	that	
	\begin{align*}
	S(y):=\beta \Big ( |A(y)|^2A(y)\Big),\quad
	N(y):=\alpha_1\big(  y \cdot \nabla A(y)+(\nabla y)^TA(y)+A(y)\nabla y\big)+\alpha_2(A(y))^2.
	\end{align*}
	Let $t\in [0,\tau_M^1\wedge \tau_M^2]$, 	by applying the operator $(I-\alpha_1\mathbb{P}\Delta)^{-1}$ to the last equations and  using It\^o formula, one gets
	\begin{align*}
	d\Vert y_1-y_2\Vert_V^2+4\nu\Vert \mathbb{D} y\Vert_2^2dt&=-2\int_D\big[(y\cdot \nabla)y_1+(y_2\cdot \nabla)y\big]y dx dt
	+2\langle \text{ div}(N(y_1)-N(y_2)), y\rangle dt \vspace{2mm}\\&\qquad+
	2\langle \text{ div}(S(y_1)-S(y_2)),y\rangle dt+2(U_1-U_2, y_1-y_2)dt\\ &\qquad+2(G(\cdot,y_1)-G(\cdot,y_2), y_1-y_2)d\mathcal{W}+\sum_{\k\ge 1}\Vert \tilde \sigma_\k^1-\tilde{\sigma}_\k^2\Vert_V^2dt\\
	&=(I_1+I_2+I_3+I_4)dt+I_5d\mathcal{W}+I_6dt, 
	\end{align*}
	where $\tilde{\sigma_\k}^i$  is the solution of \eqref{Stokes} with $f_i=\sigma_\k(\cdot,y_i),\forall \k\geq 1, \quad i=1,2$. Notice that, by using  \cite[Theorem 3]{Busuioc} and \eqref{noise1} we deduce
	$$ I_6=\sum_{\k\ge 1}\Vert \tilde \sigma_\k^1-\tilde{\sigma}_\k^2\Vert_V^2 \leq \sum_{\k\ge 1}\Vert \sigma_\k(\cdot,y_1)-\sigma_\k(\cdot,y_2)\Vert_2^2 \leq L \Vert  y_1-y_2\Vert_{2}^2.$$
	
	We will estimate $I_i, i=1,\cdots,4$. Since  $V\hookrightarrow L^4(D)$, the first term verifies
	\begin{align*}
	\vert I_1\vert 
	&=2\left\vert \int_D(y\cdot \nabla)y_1\cdot y dx\right\vert \leq C\Vert y\Vert_4^2\Vert \nabla y_1\Vert_2\leq C\Vert y\Vert_V^2\Vert \nabla y_1\Vert_2 \leq  C\Vert y\Vert_V^2\Vert y_1\Vert_{H^1}. 
	\end{align*}
	After an integration by parts, the term $I_3$, can be treated 
	using the same arguments as in \cite[Sect.	3]{Bus-Ift-2}, the term on the boundary vanish and  we have
	\begin{align*}
	I_3&=	2\langle \text{ div}(S(y_1)-S(y_2)),y_1-y_2\rangle=-2\int_D(S(y_1)-S(y_2))\cdot \nabla ydx
	\vspace{2mm}\\&=-\dfrac{\beta}{2}(\int_D(|A(y_1)|^2-|A(y_2)|^2)^2dx+\int_D(|A(y_1)|^2+|A(y_2)|^2)|A(y_1-y_2)|^2dx) \leq 0.
	\end{align*}
	Concerning $I_4$, one has 
	\begin{equation*}
	|I_4|=	2\left|\int_D(U_1-U_2)\cdot ydx\right| \leq \Vert U_1-U_2\Vert_2^2+\Vert y\Vert_2^2\leq \Vert U_1-U_2\Vert_2^2+\Vert y\Vert_V^2.
	\end{equation*}
	Let us estimate the  term $I_2$. Integrating by 
	parts and taking into account that the boundary terms vanish (see \cite[Sect. 3]{Bus-Ift-2}), we deduce
	\begin{align*}
	I_2&= 2\langle \text{ div}(N(y_1)-N(y_2)), y\rangle =-2 \int_D(N(y_1)-N(y_2))\cdot \nabla ydx\vspace{2mm}\\
	&=-\alpha_2\int_D\big(A(y_1)^2-A(y_2)^2\big)\cdot A(y)dx-\alpha_1\int_D\big(y_1 \cdot \nabla A(y_1)-y_2 \cdot \nabla A(y_2)\big)\cdot A(y)dx\\
	& -\alpha_1\int_D((\nabla y_1)^TA(y_1)+A(y_1)\nabla y_1-(\nabla y_2)^TA(y_2)-A(y_2)\nabla y_2)\cdot A(y)dx\\&=-\alpha_2I_2^1-\alpha_1I_2^2-\alpha_1I_2^3. 	
	\end{align*}
	Since
	\begin{align*}
	I_2^1&=	\int_D\big(A(y_1)^2-A(y_2)^2\big)\cdot A(y)dx=\int_D\big(A(y)A(y_1)+A(y_2)A(y)\big)\cdot A(y)dx;\\
	I_2^2&=\int_D\big(y_1 \cdot \nabla A(y_1)-y_2 \cdot \nabla A(y_2)\big)\cdot A(y)dx\\&=\int_D\big(y_1 \cdot \nabla A(y_1-y_2)+(y_1-y_2)\cdot \nabla A(y_2)\big)\cdot A(y)dx=\int_D\big(y\cdot \nabla A(y_2)\cdot A(y)dx;\\
	I_2^3&=\int_D((\nabla y_1)^TA(y_1)+A(y_1)\nabla y_1-(\nabla y_2)^TA(y_2)-A(y_2)\nabla y_2)\cdot A(y)dx\\&=2\int_D\big(A(y_1)A(y))\cdot \nabla y_1-(A(y_2)A(y))\cdot \nabla y_2\big)dx\\&=2\int_D\big((A(y))^2\cdot\nabla y_1+(A(y_2)A(y))\cdot \nabla y\big) dx;
	\end{align*}
	the H\"older's inequality and the embedding $H^1(D) \hookrightarrow L^4(D)$ yield
	\begin{align*}
	\vert I_2^1\vert &\leq \int_D\vert \big(A(y)A(y_1)+A(y_2)A(y)\big)\vert \cdot \vert A(y)\vert dx \leq C(\Vert y_1\Vert_{W^{1,\infty}}+\Vert y_2\Vert_{W^{1,\infty}})\Vert \nabla y\Vert_{2}^2;\vspace{2mm}\\
	\vert I_2^2\vert &\leq \int_D\vert \big(y\cdot \nabla A(y_2)\cdot A(y) \vert  dx\leq C\Vert y\Vert_4\Vert y_2\Vert_{W^{2,4}}\Vert \nabla y\Vert_{2}\leq  C\Vert y_2\Vert_{W^{2,4}}\Vert \nabla y\Vert_{2}^2;\vspace{2mm}\\
	\vert I_2^3\vert &\leq C\int_D\vert \big((A(y))^2\cdot\nabla y_1+(A(y_2)A(y))\cdot \nabla y\big)\vert dx \leq C(\Vert y_1\Vert_{W^{1,\infty}}+\Vert y_2\Vert_{W^{1,\infty}})\Vert \nabla y\Vert_{2}^2.
	\end{align*}
	Then the embedding  $W^{2,4}(D)\hookrightarrow W^{1,\infty}(D)$ gives
	$\vert I_2\vert \leq C (\Vert y_1\Vert_{W^{2,4}}+\Vert y_2\Vert_{W^{2,4}})\Vert  y\Vert_{V}^2.$
	By gathering the previous estimates, there exists $M_0>0$ such that 
	\begin{align*}
	\Vert y(t)\Vert_V^2+4\nu \int_0^t\Vert \mathbb{D}y\Vert_{2}^2ds&\leq \Vert y_0\Vert_{V}^2+M_0\int_0^t(\Vert y_1\Vert_{W^{2,4}}+\Vert y_2\Vert_{W^{2,4}}+1)\Vert  y\Vert_{V}^2ds+\int_0^t\Vert U_1-U_2\Vert_{2}^2ds\nonumber\\
	&\qquad+2\int_0^t(G(\cdot,y_1)-G(\cdot,y_2), y_1-y_2)d\mathcal{W}.
	\end{align*}
	
	Thanks to Burkholder-Davis-Gundy inequality, for any $\delta>0$, one has
	\begin{align*}
	2\E\sup_{s\in [0, \tau_M^1\wedge\tau_M^2]}\vert \int_0^s (G(\cdot,y_1)-G(\cdot,y_1), y)d\mathcal{W}\vert
	&=2\E\sup_{s\in [0, \tau_M^1\wedge\tau_M^2]}\vert \sum_{\k\ge 1}\int_0^s (\sigma_\k(\cdot,y_1)-\sigma_\k(\cdot,y_2),y)d\beta_\k\vert\\
	&\leq C\E [\sum_{\k\ge 1}\int_0^{\tau_M^1\wedge\tau_M^2} (\sigma_\k(\cdot,y_1)-\sigma_\k(\cdot,y_2), y)^2ds]^{1/2}
	\\ &\leq \delta\E\sup_{s\in [0, \tau_M^1\wedge\tau_M^2]}\Vert y\Vert_2^2+C_\delta\E\int_0^{\tau_M^1\wedge\tau_M^2}\Vert y\Vert_2^2dr.
	\end{align*}
	An appropriate choice of $\delta$ and taking into account that $t\in [0,\tau_M^1\wedge\tau_M^2]$ yield
	\begin{align}
	\label{stabilityVnorm}
	\E\sup_{s\in [0, t\wedge\tau_M^1\wedge\tau_M^2]}\Vert y(s)\Vert_V^2&\leq \E\Vert y_0\Vert_{V}^2+\E\int_0^{t\wedge\tau_M^1\wedge\tau_M^2}\Vert U_1(s)-U_2(s)\Vert_{2}^2ds \nonumber\\
	&\quad+M_0\E\int_0^{t\wedge\tau_M^1\wedge\tau_M^2}(\Vert y_1(s)\Vert_{W^{2,4}}+\Vert y_2(s)\Vert_{W^{2,4}}+1)\Vert  y(s)\Vert_{V}^2ds\nonumber\\
	&\leq \E\Vert y_0\Vert_{V}^2+\E\int_0^{t\wedge\tau_M^1\wedge\tau_M^2}\Vert U_1(s)-U_2(s)\Vert_{2}^2ds+M_0(2M+1)\E\int_0^{t\wedge\tau_M^1\wedge\tau_M^2}\Vert  y(s)\Vert_{V}^2ds.
	\end{align}
	Finally, Gronwall's inequality ensures Lemma \ref{Lemma-V-stability}.
\end{proof}
\subsection{Pathwise uniqueness of \eqref{local-martingale}}\label{Subsection-uniq}
If $y_0^1=y_0^2$ and $U_1=U_2$,  it follows from  Lemma \ref{Lemma-V-stability} that the corresponding solutions $y_1$ and $y_2$ coincide $\bar P$-a.s. for any $t\in [0,\tau_M^1\wedge\tau_M^2]$. Then from the definition of stopping time \eqref{stopping-time}, we obtain $\tau_M^1=\tau_M^2$ $\bar P$-a.s. Moreover, notice that $y_i(t)=y_i(\tau_M^i)$ for any $\tau_M^i< t\leq T, i=1,2$ and we are able to conclude that pathwise uniqueness holds for \eqref{local-martingale}. 

\subsection{Strong solution of \eqref{local-martingale}}
Let $(\Omega,\mathcal{F},(\mathcal{F}_t)_{t\geq 0},P)$ be a stochastic basis and $(\mathcal{W}(t))_{t\geq 0}$ be a $(\mathcal{F}_t)$-cylindrical Wiener process	with	values	in	$H_0$.
From Subsections \ref{local-martingale-subsection} and  \ref{Subsection-uniq}, it follows the existence of weak probabilistic solution and pathwise (pointwise) uniqueness  for compatible solutions ( see \cite[Def. 3.1 $\&$ Rmk. 3.5]{Kurtz}) of the modified problem \eqref{local-martingale}. By using  Theorem \cite[Thm. 3.14]{Kurtz}, we are able to deduce
\begin{lemma}\label{lemma-strong-sol}
Let	$M \in \mathbb{N}$	be large enough,	there exist a unique strong  solution	defined	on	$(\Omega,\mathcal{F},(\mathcal{F}_t)_{t\geq 0},P)$,	denoted	by	$y^M$	and	$(\zeta_M)_M$,	a	sequence	of	a.s. strictly positive $(\mathcal{F}_t)$-stopping time	such	that: 
	\begin{itemize}
		\item $y^M$ is a $W$-valued predictable process  and $ \zeta_M:=\inf\{ t\geq 0: \Vert y^M(t)\Vert_{W^{2,4}} \geq M\}\wedge T. $
		\item $y^M$ belongs to the space 
		$$L^p(\Omega;\mathcal{C}([0,T],(W^{2,4}(D))^d))\cap L^p_{w-*}(\Omega;L^\infty(0,T;\widetilde{W})); $$
		\item   $y^M$ satisfies the following equality, $P$-a.s. for all $t\in [0,T]$
		\begin{align}
		(y^M(t),\phi)_V&=(y_0,\phi)_V+\displaystyle\int_0^{t\wedge \zeta_M}\big(\nu \Delta y^M-(y^M\cdot \nabla)v(y^M)-\sum_{j}v(y^M)^j\nabla  (y^M)^j\nonumber\\
		&+(\alpha_1+\alpha_2)\text{div}(A(y^M)^2) +\beta\text{div}(|A(y^M)|^2A(y^M))+ U, \phi\big) ds 
		\label{sol-strong-local}\\
		&	+\displaystyle\int_0^{t\wedge \zeta_M}(G(\cdot,y^M),\phi)d\mathcal{W}, \quad 	\text{ for all } \phi \in V.\qquad  \nonumber 
		\end{align}
	\end{itemize} 
\end{lemma}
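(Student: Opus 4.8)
The plan is to invoke the generalized Yamada--Watanabe--Engelbert principle in the form of \cite[Thm. 3.14]{Kurtz}, which turns the conjunction of \emph{weak existence} and \emph{pathwise uniqueness for compatible solutions} into the existence of a \emph{unique strong} solution. First I would recast the modified problem \eqref{local-martingale} in the input/output formalism of \cite{Kurtz}: the input is the data $(\mathcal{W},y_0,U)$, where $\mathcal{W}$ is the $H_0$-valued cylindrical Wiener process and $(y_0,U)\in L^p(\Omega,\widetilde{W})\times L^p(\Omega\times(0,T),(H^1(D))^d)$; the output is a $W$-valued adapted process $y$ with $y\in L^p(\Omega;\mathcal{C}([0,T],(W^{2,4}(D))^d))\cap L^p_{w-*}(\Omega;L^\infty(0,T;\widetilde{W}))$ solving \eqref{local-martingale} up to the stopping time $\tau_M=\inf\{t\ge 0:\Vert y(t)\Vert_{W^{2,4}}\ge M\}\wedge T$. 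As path space one takes the separable space $\mathcal{C}([0,T],(W^{2,4}(D))^d)$, carrying the $L^\infty(0,T;\widetilde{W})$ bound as an additional almost sure constraint that is preserved under the identification.

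Second, weak existence is already available from Subsection \ref{local-martingale-subsection}: the martingale solution $y_\infty$ of the cut-off system provided by Theorem \ref{exis-thm-mart}, together with $\tau_M$, gives a martingale (probabilistically weak) solution $(\bar y,\tau_M)$ of \eqref{local-martingale} on $(\bar\Omega,\bar{\mathcal{F}},(\mathcal{F}_t^\infty),\bar P)$ with $\mathcal{L}(\bar y(0),\bar U)=\mathcal{L}(y_0,U)$; it satisfies the required integrability by Lemma \ref{lemma-estimate-new}, and $\tau_M$ is $\bar P$-a.s. strictly positive for $M$ large, since $\bar y$ is bounded in $L^p(\bar\Omega;L^\infty(0,T;\widetilde{W}))$. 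Moreover this solution is \emph{compatible} with its driving data in the sense of \cite[Def. 3.1, Rmk. 3.5]{Kurtz}, by the very construction of the filtration $(\mathcal{F}_t^\infty)$ as the augmentation of $\sigma(y_\infty(s),\mathcal{W}_\infty(s),\int_0^s\bar U(r)dr)_{0\le s\le t}$.

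Third, pathwise uniqueness within the class of compatible solutions is exactly the content of Subsection \ref{Subsection-uniq}: applying the stability estimate of Lemma \ref{Lemma-V-stability} with $y_0^1=y_0^2$ and $U_1=U_2$ forces $y_1=y_2$ $\bar P$-a.s. on $[0,\tau_M^1\wedge\tau_M^2]$, whence $\tau_M^1=\tau_M^2$ $\bar P$-a.s. by the definition \eqref{stopping-time} of the stopping time, and equality then persists on all of $[0,T]$ because $y_i$ is frozen after $\tau_M^i$. With weak existence, compatibility and this joint uniqueness in hand, \cite[Thm. 3.14]{Kurtz} yields a solution defined on the originally prescribed basis $(\Omega,\mathcal{F},(\mathcal{F}_t)_{t\ge 0},P)$ and adapted to the filtration generated by $(\mathcal{W},y_0,U)$, unique in the pathwise sense; I would call it $y^M$ and set $\zeta_M:=\inf\{t\ge 0:\Vert y^M(t)\Vert_{W^{2,4}}\ge M\}\wedge T$. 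Since the law of $y^M$ coincides with that of $\bar y$, it inherits the regularity class and the a.s. strict positivity of $\zeta_M$ (for $M$ large), and \eqref{sol-strong-local} holds $P$-a.s. for all $t\in[0,T]$ and all $\phi\in V$.

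The main obstacle I anticipate is the careful verification of the hypotheses of \cite[Thm. 3.14]{Kurtz}: one must check that the notion of solution employed here fits Kurtz's measurability and compatibility framework --- in particular that pathwise uniqueness has been proved for \emph{compatible} solutions, and not merely for two solutions sharing a single stochastic basis --- and one must accommodate the non-separable space $L^\infty(0,T;\widetilde{W})$ by working on the separable path space $\mathcal{C}([0,T],(W^{2,4}(D))^d)$ and treating the higher $\widetilde{W}$-bound as a measurable constraint stable under the Skorokhod-type identification. Everything else is bookkeeping: transferring the a priori bounds of Lemma \ref{lemma-estimate-new} and the weak and weak-$*$ limits of Lemma \ref{lem-cv1} to the strong solution through the equality of laws.
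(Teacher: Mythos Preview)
Your proposal is correct and follows essentially the same route as the paper: the paper's ``proof'' is the single paragraph preceding the lemma, which simply records that weak existence (Subsection \ref{local-martingale-subsection}) together with pathwise uniqueness for compatible solutions (Subsection \ref{Subsection-uniq}, via Lemma \ref{Lemma-V-stability}) allows one to invoke \cite[Thm. 3.14]{Kurtz}. Your write-up is in fact more careful than the paper's, since you spell out the input/output formalism, the compatibility issue (\cite[Def. 3.1, Rmk. 3.5]{Kurtz}), and the separability workaround for $L^\infty(0,T;\widetilde{W})$, all of which the paper leaves implicit.
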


\section{Proof of Theorem \ref{main-thm}}\label{section-proof-main}
Let $M \in \mathbb{N}$ be large enough and note that $(y^M, \zeta_M)$  (see Lemma \ref{lemma-strong-sol}) is a local strong solution to \eqref{I} in the sense of Definition \ref{Def-strong-sol-main}.
\subsection{Local pathwise uniqueness}
Let $(z_1,\varrho_1)$ and $(z_2,\varrho_2)$ be two local strong solutions to \eqref{I},	in	the	sense	of	Definition	\ref{Def-strong-sol-main}.  Define the stopping time
 $$ \theta_S:=\inf\{ t\geq 0: \Vert z_1(t\wedge
 \varrho_1)\Vert_{W^{2,4}}+\Vert z_2(t\wedge \varrho_2)\Vert_{W^{2,4}} \geq S\}\wedge T;	\quad	S\in	\mathbb{N}. $$
Note $\theta_S \to T$ as $S\to \infty$, since $(z_i)_{i=1,2}$ are bounded in $L^p_{w-*}(\Omega;L^\infty(0,T;\widetilde{W}))$ by a positive constant independent of $S$.
By using the same arguments of the proof of Lemma \ref{Lemma-V-stability}, we deduce $$ P\big(z_1(t)=z_2(t); \quad\forall t \in [0,\varrho_1\wedge \varrho_2\wedge \theta_S]\big)=1.$$
By letting $S\to \infty$, we are able to get the local pathwise uniqueness, in the sense of Definition \ref{def-uniq} (i).		Namely
$$ P\big(z_1(t)=z_2(t); \quad\forall t \in [0,\varrho_1\wedge \varrho_2]\big)=1.$$

\subsection{Maximal strong solution}
Our aim is to show that   the solution can be extended until   a maximal time interval. It is worth mentioning that analogous extension results can be found in the literature (see e.g. \cite{Breit-Feieisl-Hofmanova,Glatt-Vicol,Jacod}).\\ 

Let $\mathcal{A}$ be the set of  all stopping times  corresponding to a local pathwise solution of \eqref{I}  starting from the initial datum $y_0$ and in the presence of the external force $U$. Thanks to Lemma \ref{lemma-strong-sol}, the set $\mathcal{A}$ is nonempty.  Set $\mathbf{t}=\sup \mathcal{A}$ and choose an increasing sequence $(\zeta_M)_M\subset \mathcal{A}$ such that $\displaystyle\lim_{M\to\infty} \zeta_M=\mathbf{t}$,	we	recall	that	$ \zeta_M:=\inf\{ t\geq 0: \Vert y^M(t)\Vert_{W^{2,4}} \geq M\}\wedge T $	and	$y^M$	satisfies	\eqref{sol-strong-local}. Due to the pathwise uniqueness, we define a solution $y$ on $\displaystyle\bigcup_{M\in\mathbb{N}}[0,\zeta_M]$ by setting $y:=y^M$ on $[0,\zeta_M]$.\\

 For each $m>0$, consider
$$\sigma_m=\mathbf{t}\wedge\inf\{ 0\leq t\leq T\; \vert\quad \Vert y(t)\Vert_{W^{2,4}} \geq m\}. $$
Recall that $y$ is continuous with values in $(W^{2,4}(D))^d$ and $\sigma_m$ is a well-defined stopping time. On the other hand,  note that for a.e. $\omega \in \Omega$, there exists $m>0$ such that $\sigma_m >0$ \textit{i.e.} $\sigma_m$ is a strictly positive stopping time P-a.s.  It follows that $(y,\sigma_m)$ is a local strong solution for each $m>0$,  by  using the continuity and the uniqueness of the solution.\\

Let us show that $\sigma_m <\mathbf{t}$ on $[\mathbf{t} <T]$. Assume that $P(\sigma_m=\mathbf{t}) >0$, since $(y,\sigma_m)$ is a local strong solution then there exists another stopping time $\rho >\sigma_m$ and a process $y^*$ such that $(y^*,\rho)$ is a local strong solution with the same  data, which contradict the maximality of $\mathbf{t}$. Therefore, $P(\mathbf{t}=\sigma_m)=0$. In conclusion, $\sigma_m$ is an increasing sequence of stopping time, which converges to $\mathbf{t}$. Additionaly, on the set $[\mathbf{t} <T]$, one has
$$ \sup_{t\in [0,\sigma_m]}\Vert y(t)\Vert_{W^{2,4}} \geq m$$
and   $ \displaystyle\sup_{t\in [0,\mathbf{t})}\Vert y(t)\Vert_{W^{2,4}} =\infty  \text{ on  } [\mathbf{t} <T].$
\begin{remark}\label{rmq-blow-up-2D-3D}
		Thanks	to		Remark	\ref{Rmq-blow-up-continuity},	we		obtain	that			$	y^M	\in	L^p(\Omega;\mathcal{C}([0,T], (W^{2,q}(D))^d))$		for	$	q<6$	in	the	3D	case.	Therefore,	one	can	replace	$\zeta_M$	(see	Lemma	\ref{lemma-strong-sol})	by	the	following	stopping	time
			$$ \widetilde{\zeta_M}:=\inf\{ t\geq 0: \Vert y^M(t)\Vert_{W^{2,q}} \geq M\}\wedge T. $$
							$\bullet$	In	the	2D	case,	we		obtain	that			$	y^M	\in	L^p(\Omega;\mathcal{C}([0,T], (W^{2,a}(D))^d))$	for	large	finite	$a<\infty$	and	the	stopping	time	$\zeta_M$	(see	Lemma	\ref{lemma-strong-sol})	can	be	replaced	by
							$$ \widetilde{\widetilde{\zeta_M}}:=\inf\{ t\geq 0: \Vert y^M(t)\Vert_{W^{2,a}} \geq M\}\wedge T,	\quad	\text{  for	large }	a<\infty. $$
							In	other	words,	the	life	span	of	the	trajectories	of	the	solution	to	\eqref{I}	is	larger	in	2D	than	3D	case.
			\end{remark}
\begin{remark}\label{rmq-special-noise}
	\begin{itemize}
		\item An important multiplicative noise that can be considered corresponds to the following  linear  noise
		$$ G(\cdot,y)d\mathcal{W}_t= H(u)d\textbf{B}_t:=(u-\alpha_1\Delta u)d\textbf{B}_t,$$
		where $(\textbf{B}_t)_{t\geq 0}$ is one dimensional $\mathbb{R}-$valued  Brownian motion.
 Notice that  $H:\widetilde{W}\to
 L_2(\mathbb{R}, (H^1(D))^d)$ and
$$\|H(u)\|_{ L_2(\mathbb{R}, (H^1(D))^d))}^2\equiv\|u-\alpha_1\Delta u\|_{(H^1(D))^d}^2\leq C\Vert u\Vert_{\widetilde{W}}^2.$$
By performing  minor  modifications, we are able to prove Theorem \ref{main-thm} by replacing $G(\cdot,u)d\mathcal{W}$ by $H(u)d\textbf{B}_t$.
\\

\item  We wish to draw the reader's attention to the fact that the same analysis can be applied to an additive noise case, with  $G \in L^p\big(\Omega; \mathcal{C}([0,T],L_2(\mathbb{H},V))\big)$. One example is the following:
let $\sigma_\k:[0,T] \to V$ such that $\displaystyle\sup_{t\in [0,T]}\sum_{ k\geq 1}\Vert \sigma_\k(t)\Vert_V^2 <\infty$,
we can define $G:[0,T]\to L_2(\mathbb{H},V)$ by
$Ge_\k=\sigma_\k$, $\k\in\mathbb{N}$. 
The noise can be understood in the following sense
$$ \int_0^TGd\mathcal{W}=\sum_{\k \ge 1}\int_0^T \sigma_\k d\beta_\k$$
and  $\displaystyle\int_0^T\Vert G(t)\Vert_{L_2(\mathbb{H},V)}^2dt=\sum_{\k \ge 1}\int_0^T\Vert \sigma_\k(t)\Vert_V^2dt$.
\\
\item If one sets $\beta=0$ in \eqref{I}, then a similar estimates can be obtained and the same result holds for second grade fluids model, by following the same analysis.
\end{itemize}
\end{remark}

\bigskip

\textbf{Acknowledgment:} 
This work is funded by national funds through the FCT - Funda\c c\~ao para a Ci\^encia e a Tecnologia, I.P., under the scope of the projects UIDB/00297/2020 and UIDP/00297/2020 (Center for Mathematics and Applications).	The authors would like also to thank the anonymous referees for their comments and	help	in improving		our work.\\

\textbf{Declaration:}
  The authors have no conflicts of interest to declare that are relevant to the content of this article.

\textbf{\bigskip }

 \end{document}